\newtheorem{theorem}{Theorem}[section]
\newtheorem{lemma}[theorem]{Lemma}
\newtheorem{corollary}[theorem]{Corollary}
\newtheorem{proposition}[theorem]{Proposition}
\theoremstyle{definition}
\newtheorem{definition}[theorem]{Definition}
\newtheorem{example}[theorem]{Example}
\theoremstyle{remark}
\newtheorem{remark}[theorem]{Remark}
\numberwithin{equation}{section}
\newcommand{\Real}{{\mathbb R}}
\newcommand{\N}{{\mathbb N}}
\newcommand{\eps}{\varepsilon}
\newcommand{\x}{\mathbf{x}}
\newcommand{\y}{\mathbf{y}}
\newcommand {\hide}[1]{}
\begin{document}
\title[approximation by compact families]{
Approximation of definable sets by compact families, and upper bounds on homotopy and
homology
}
\author{Andrei Gabrielov}
\address{Department of Mathematics,
Purdue University, West Lafayette, IN 47907, USA}
\email{agabriel@math.purdue.edu}
\author{Nicolai Vorobjov}
\address{
Department of Computer Science, University of Bath, Bath
BA2 7AY, England, UK}
\email{nnv@cs.bath.ac.uk}

\begin{abstract}
We prove new upper bounds on homotopy and homology groups of o-minimal sets
in terms of their approximations by compact o-minimal sets.
In particular, we improve the known upper bounds on Betti numbers of
semialgebraic sets defined by quantifier-free formulae,
and obtain for the first time a singly exponential bound on Betti numbers
of sub-Pfaffian sets.
\end{abstract}
\maketitle

\section*{Introduction}

We study upper bounds on topological complexity of sets
definable in o-minimal structures over the reals.
The fundamental case of algebraic sets in $\Real^n$ was first considered around 1950 by
Petrovskii and Oleinik \cite{O, PO}, and then in 1960s by Milnor \cite{Milnor} and
Thom \cite{Thom}.
They gave explicit upper bounds on total Betti numbers in terms of degrees and numbers of
variables of the defining polynomials.

There are two natural approaches to generalizing and expanding these results.
First, noticing that not much of algebraic geometry is used in the proofs,
one can try to obtain the similar upper bounds for polynomials with the
``description complexity'' measure different from the degree, and for non-algebraic functions,
such as Khovanskii's fewnomials and Pfaffian functions \cite{Khov}.
A bound for algebraic sets defined by quadratic polynomials was proved in \cite{Barvinok}.

Second, the bounds can be expanded to semialgebraic and semi-Pfaffian sets defined by
formulae more general  than just conjunctions of equations.
Basu \cite{Basu99} proved the tight upper bound on Betti numbers in the case of
semialgebraic sets defined by conjunctions and disjunctions of non-strict inequalities.
The proof can easily be extended to special classes of non-algebraic
functions.
For fewnomials and Pfaffian functions, this was done by Zell \cite{Zell}.
For quadratic polynomials an upper bound was proved in \cite{BPasR}.
The principal difficulty arises when neither the set itself nor its complement is locally closed.

Until recently, the best available upper bound for the Betti numbers of a semialgebraic set
defined by an arbitrary Boolean combination of equations and inequalities remained
doubly exponential in the number of variables.
The first singly exponential  upper bound was obtained by the authors in \cite{GV05}
based on a construction which replaces a given
semialgebraic set by a homotopy equivalent compact semialgebraic set.
This construction extends to semi-Pfaffian sets and, more generally, to the
sets defined by Boolean combinations of equations and inequalities between continuous
functions definable in an o-minimal structure over $\Real$.
It cannot be applied to the sets defined by formulae with quantifiers, such
as sub-Pfaffian sets, but can be used in
conjunction with effective quantifier elimination in the semialgebraic situation.

In \cite{GVZ} we suggested a spectral sequence converging to the homology of the projection
of an o-minimal set under the closed continuous surjective definable map.
It gives an upper bound on Betti number of the projection which, in the semialgebraic case, is
better than the one based on quantifier elimination.
The requirement for the map to be closed can be relaxed but not completely removed, which
left the upper bound problem unresolved in the general Pfaffian case, where quantifier
elimination is not applicable.

In this paper we suggest a new construction approximating a large class of definable sets,
including the sets defined by
arbitrary Boolean combinations of equations and inequalities, by compact sets.
The construction is applicable to images of such sets under a large class of definable maps,
e.g., projections.
Based on this construction
we refine the results from \cite{GV05, GVZ}, and prove similar upper
bounds, individual for different Betti numbers, for
images under arbitrary continuous definable maps.

In the semialgebraic case the bound from \cite{GV05} is squaring the number of different
polynomials occurring in the formula, while
the bounds proved in this paper multiply the number of polynomials by a typically smaller
coefficient that does not exceed the dimension.
This is especially relevant for applications to problems of
subspace arrangements, robotics and visualization, where the dimension and degrees
usually remain small, while the number of polynomials is very large.
Applied to projections, the bounds
are stronger than the ones obtained by the effective quantifier elimination.

In the non-algebraic case, for the first time the bounds, singly exponential in the
number of variables,
are obtained for projections of semi-Pfaffian sets, as well as projections
of sets defined by Boolean formulae  with polynomials from
special classes.

\subsection*{Notations}
In this paper we use the following (standard) notations.
For a topological space $X$,
$H_i(X)$ is its singular homology group with coefficients in an Abelian group,
$\pi_i(X)$ is the homotopy group (provided that $X$ is connected),
the symbol $\simeq$ denotes the homotopy equivalence, and the symbol $\cong$ stands for
the group isomorphism.
If $Y \subset X$, then $\overline Y$ denotes its closure in $X$.

\section{Main result}\label{sec:main}

In what follows we fix an o-minimal structure over $\Real$ and consider sets,
families of sets, maps, etc., {\em definable} in this structure.

\begin{definition}\label{def:S_delta}
Let $G$ be a definable compact set.
Consider a definable family $\{ S_\delta \}_{\delta >0}$ of
compact subsets of $G$, such that for all $\delta', \delta >0$,
if $\delta' > \delta$, then $S_{\delta'} \subset S_{\delta}$.
Denote $S := \bigcup_{\delta >0} S_{\delta}$.

For each $\delta >0$, let $\{ S_{\delta, \eps} \}_{\delta, \eps >0}$ be a definable family
of compact subsets of $G$ such that:
\begin{itemize}
\item[(i)]
for all $\eps, \eps' \in (0,1)$, if $\eps' > \eps$, then
$S_{ \delta, \eps} \subset S_{ \delta, \eps'}$;
\item[(ii)]
$S_{\delta}= \bigcap_{\eps >0} S_{\delta, \eps}$;
\item[(iii)]
for all $\delta' >0$ sufficiently smaller than
$\delta$, and for all $\eps' >0$, there exists an open in $G$ set $U \subset G$
such that $S_{\delta} \subset U \subset S_{\delta' , \eps'}$.
\end{itemize}
We say that $S$ is {\em represented} by the families $\{ S_\delta \}_{\delta >0}$ and
$\{ S_{\delta, \eps} \}_{\delta, \eps >0}$ in $G$.
\end{definition}

Let $S'$ (respectively, $S''$) be represented by $\{ S'_\delta \}_{\delta >0}$ and
$\{ S'_{\delta, \eps} \}_{\delta, \eps >0}$ (respectively, by $\{ S''_\delta \}_{\delta >0}$
and $\{ S''_{\delta, \eps} \}_{\delta, \eps >0}$) in $G$.

\begin{lemma}\label{le:bool}
$S' \cap S''$ is represented by the families $\{ S'_\delta \cap S''_\delta \}_{\delta >0}$ and
$\{ S'_{\delta, \eps} \cap S''_{\delta, \eps} \}_{\delta, \eps >0}$ in $G$, while
$S' \cup S''$ is represented by $\{ S'_\delta \cup S''_\delta \}_{\delta >0}$ and
$\{ S'_{\delta, \eps} \cup S''_{\delta, \eps} \}_{\delta, \eps >0}$ in $G$.
\end{lemma}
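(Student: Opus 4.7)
The plan is to verify, for both the intersection and the union, each clause of Definition~\ref{def:S_delta} in turn: the monotonicity and compactness of the representing families, the identities $S'\cap S'' = \bigcup_\delta(S'_\delta\cap S''_\delta)$ and $S'\cup S'' = \bigcup_\delta(S'_\delta\cup S''_\delta)$, and then conditions (i), (ii), (iii). Compactness and monotonicity in $\delta$ are preserved by finite intersections and unions, so these come for free. For the identity $S'\cap S'' = \bigcup_\delta(S'_\delta\cap S''_\delta)$, the inclusion $\supset$ is immediate; for the reverse, a point $x\in S'\cap S''$ lies in some $S'_{\delta_1}$ and some $S''_{\delta_2}$, and by monotonicity in $\delta$ it lies in $S'_\delta\cap S''_\delta$ for $\delta=\min(\delta_1,\delta_2)$. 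The union identity is even more direct.

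Clause (i) (monotonicity in $\eps$) is obvious in both cases. For clause (ii), the intersection case follows from the set-theoretic identity
\[
\bigcap_{\eps>0}\bigl(S'_{\delta,\eps}\cap S''_{\delta,\eps}\bigr)=\Bigl(\bigcap_{\eps>0}S'_{\delta,\eps}\Bigr)\cap\Bigl(\bigcap_{\eps>0}S''_{\delta,\eps}\Bigr)=S'_\delta\cap S''_\delta.
\]
The union case is the only step where a non-formal argument is needed, and I expect it to be the main (small) obstacle: in general $\bigcap_\alpha(A_\alpha\cup B_\alpha)\supsetneq(\bigcap_\alpha A_\alpha)\cup(\bigcap_\alpha B_\alpha)$. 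Here it works because of clause (i): if $x\in\bigcap_{\eps>0}(S'_{\delta,\eps}\cup S''_{\delta,\eps})$ and $x\notin S'_\delta$, then some $\eps_0>0$ satisfies $x\notin S'_{\delta,\eps_0}$, hence by monotonicity $x\notin S'_{\delta,\eps}$ for all $\eps\le\eps_0$; consequently $x\in S''_{\delta,\eps}$ for all $\eps\le\eps_0$, so $x\in S''_\delta$. Thus the reverse inclusion holds and (ii) is established for unions.

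Clause (iii) is handled by combining the neighborhoods furnished by the hypothesis on $S'$ and $S''$. Given $\eps'>0$ and $\delta'>0$ sufficiently smaller than $\delta$ (take the minimum of the thresholds provided for $S'$ and $S''$ separately), choose open sets $U',U''\subset G$ with
\[
S'_\delta\subset U'\subset S'_{\delta',\eps'},\qquad S''_\delta\subset U''\subset S''_{\delta',\eps'}.
\]
Then $U:=U'\cap U''$ is open in $G$ and satisfies $S'_\delta\cap S''_\delta\subset U\subset S'_{\delta',\eps'}\cap S''_{\delta',\eps'}$, which verifies (iii) for the intersection. For the union, $U:=U'\cup U''$ is open, contains $S'_\delta\cup S''_\delta$, and is contained in $S'_{\delta',\eps'}\cup S''_{\delta',\eps'}$, verifying (iii) for the union. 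This completes the plan.
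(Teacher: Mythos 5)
Your proof is correct and fills in the details that the paper compresses into ``straightforward checking of Definition~\ref{def:S_delta}.'' You rightly identify the one clause where the verification is not purely formal --- clause (ii) for the union, where the generic inclusion $\bigcap_\eps(A_\eps\cup B_\eps)\supset(\bigcap_\eps A_\eps)\cup(\bigcap_\eps B_\eps)$ can be strict --- and correctly close the gap using the $\eps$-monotonicity from clause (i).
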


\begin{proof}
Straightforward checking of Definition~\ref{def:S_delta}.
\end{proof}

Let $S$ be represented by $\{ S_\delta \}_{\delta >0}$ and
$\{ S_{\delta, \eps} \}_{\delta, \eps >0}$ in $G$, and let $F:\> D \to H$ be a continuous
definable map, where $D$ and $H$ are definable, $S \subset D \subset G$, and $H$ is compact.

\begin{lemma}\label{le:map_F}
Let $D$ be open in $G$, and $F$ be an open map.
Then $F(S)$ is represented by families $\{ F(S_{\delta}) \}_{\delta >0}$ and
$\{ F(S_{\delta, \eps}) \}_{\delta, \eps >0}$ in $H$.
\end{lemma}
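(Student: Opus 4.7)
The plan is to verify conditions (i), (ii), (iii) of Definition~\ref{def:S_delta} for $\{F(S_\delta)\}_{\delta>0}$ and $\{F(S_{\delta,\eps})\}_{\delta,\eps>0}$ in $H$, after first handling the fact that $F$ is defined only on $D$. Since $S_\delta\subset S\subset D$ and $G\setminus D$ is closed in the compact $G$, the decreasing family of compacta $S_{\delta,\eps}\cap(G\setminus D)$ has empty intersection $S_\delta\cap(G\setminus D)=\emptyset$; the finite intersection property then yields an $\eps_0(\delta)>0$ below which $S_{\delta,\eps}\subset D$. Monotonicity of $S_{\delta,\eps}$ in $\eps$ lets us work with such small $\eps$ throughout, since only small $\eps$ are relevant to both the intersection in (ii) and the approximation in (iii).

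Given this, compactness of $F(S_\delta)$ and $F(S_{\delta,\eps})$ follows from continuity of $F$ on its compact sources, and monotonicity (i) is inherited trivially from the source families under taking images. For (ii), the inclusion $F(S_\delta)\subset\bigcap_\eps F(S_{\delta,\eps})$ is immediate from $S_\delta\subset S_{\delta,\eps}$. The reverse inclusion is the main step: given $y\in\bigcap_\eps F(S_{\delta,\eps})$, the fibers $F^{-1}(y)\cap S_{\delta,\eps}$ form a decreasing family of nonempty compact subsets of $G$ — nonempty by hypothesis, and compact because $F^{-1}(y)$ is closed in $D$ while $S_{\delta,\eps}\subset D$. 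The finite intersection property in $G$ then produces a point $x\in F^{-1}(y)\cap\bigcap_\eps S_{\delta,\eps}=F^{-1}(y)\cap S_\delta$, so $y=F(x)\in F(S_\delta)$.

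For (iii), fix $\delta'$ sufficiently smaller than $\delta$ and $\eps'>0$; after replacing $\eps'$ with $\min(\eps',\eps_0(\delta'))$ one may assume $S_{\delta',\eps'}\subset D$. Applying (iii) for $S$ yields an open $U\subset G$ with $S_\delta\subset U\subset S_{\delta',\eps'}\subset D$. Since $F$ is open, $V:=F(U)$ is open in $H$, and $F(S_\delta)\subset V\subset F(S_{\delta',\eps'})$, as required.

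The main obstacle is the reverse inclusion in (ii): it is the one point where set-theoretic manipulation alone does not suffice, and it requires both the compactness of $G$ (for the finite intersection property) and the openness of $D$ (to ensure that the fibers $F^{-1}(y)\cap S_{\delta,\eps}$ are compact, not merely closed in $D$). The openness of $F$, by contrast, enters only in (iii) and in a routine way.
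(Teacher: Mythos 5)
Your proof is correct and is a faithful expansion of the paper's one-line ``straightforward checking'' argument: condition (i) transfers trivially, (ii) needs the finite-intersection-property argument on fibres exactly as you give it, and (iii) is where openness of $F$ enters, just as the paper's parenthetical remark indicates. One small imprecision in your closing commentary: openness of $D$ is actually consumed earlier, in your reduction showing $S_{\delta,\eps}\subset D$ for small $\eps$ (you need $G\setminus D$ closed so that $S_{\delta,\eps}\cap(G\setminus D)$ is compact and the FIP applies); once that containment is in hand, compactness of $F^{-1}(y)\cap S_{\delta,\eps}$ follows simply because it is closed in the compact set $S_{\delta,\eps}$, without further appeal to openness of $D$. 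The proof body itself handles this correctly, so this is only a matter of attribution in the final paragraph.
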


\begin{proof}
Straightforward checking of Definition~\ref{def:S_delta} (openness is required for
(iii) to hold).
\end{proof}

Consider projections $\rho_1: \> G \times H \to G$ and $\rho_2:\> G \times H \to H$.
Let $\Gamma \subset G \times H$ be the graph of $F$.
Suppose that $\Gamma$ is represented by families $\{ \Gamma_\delta \}_{\delta >0}$ and
$\{ \Gamma_{\delta, \eps} \}_{\delta, \eps >0}$ in $G \times H$.

\begin{lemma}
The set $F(S)$ is represented by the families
$$\{ \rho_2(\rho_{1}^{-1} (S_\delta) \cap \Gamma_\delta) \}_{\delta >0}\quad \text{and}\quad
\{ \rho_2(\rho_{1}^{-1} (S_{\delta, \eps}) \cap \Gamma_{\delta ,\eps}) \}_{\delta, \eps >0}$$
in $H$.
\end{lemma}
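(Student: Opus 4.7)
The plan is to verify all three conditions (i)--(iii) of Definition~\ref{def:S_delta} for the proposed families, using the elementary identity $F(A) = \rho_2(\rho_1^{-1}(A) \cap \Gamma)$ valid for any $A \subset D$. Monotonicity in both $\delta$ and $\eps$ is automatic: $\rho_1^{-1}$ and $\rho_2$ preserve inclusions, so the nesting of $\{S_\delta\}$, $\{S_{\delta,\eps}\}$, $\{\Gamma_\delta\}$, $\{\Gamma_{\delta,\eps}\}$ transfers termwise to the proposed families. The identity $F(S) = \bigcup_{\delta > 0} \rho_2(\rho_1^{-1}(S_\delta) \cap \Gamma_\delta)$ also follows easily: given $y = F(x)$ with $x \in S$, I would pick $\delta_1$ with $x \in S_{\delta_1}$ and $\delta_2$ with $(x,y) \in \Gamma_{\delta_2}$, then set $\delta := \max(\delta_1, \delta_2)$, so that both $x \in S_\delta$ and $(x,y) \in \Gamma_\delta$ by monotonicity.

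The main obstacle is condition (ii), since projection does not in general commute with infinite intersection. The claim to establish is
\[
\rho_2\bigl(\rho_1^{-1}(S_\delta) \cap \Gamma_\delta\bigr) = \bigcap_{\eps > 0} \rho_2\bigl(\rho_1^{-1}(S_{\delta,\eps}) \cap \Gamma_{\delta,\eps}\bigr),
\]
and the nontrivial inclusion $\supset$ I would handle by a compactness argument. Fix $y$ in the right-hand side and consider the nested family of compact subsets of $G$
\[
B_\eps := S_{\delta,\eps} \cap \{\, x \in G : (x,y) \in \Gamma_{\delta,\eps} \,\},
\]
each of which is nonempty by hypothesis. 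By the finite intersection property in the compact space $G$ there is $x \in \bigcap_{\eps > 0} B_\eps$. Invoking condition (ii) for the representations of $S$ and of $\Gamma$ separately then yields $x \in S_\delta$ and $(x,y) \in \Gamma_\delta$, which proves that $y$ belongs to the left-hand side.

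For condition (iii), fix $\eps' > 0$ and take $\delta' > 0$ small enough that (iii) applies to both representations of $S$ and $\Gamma$. I would then obtain an open set $U_1 \subset G$ with $S_\delta \subset U_1 \subset S_{\delta',\eps'}$ and an open set $U_2 \subset G \times H$ with $\Gamma_\delta \subset U_2 \subset \Gamma_{\delta',\eps'}$, and set $V := \rho_1^{-1}(U_1) \cap U_2$. This $V$ is open in $G \times H$, and because projections from products are open maps, $U := \rho_2(V)$ is open in $H$. The chain of inclusions
\[
\rho_2\bigl(\rho_1^{-1}(S_\delta) \cap \Gamma_\delta\bigr) \subset U \subset \rho_2\bigl(\rho_1^{-1}(S_{\delta',\eps'}) \cap \Gamma_{\delta',\eps'}\bigr)
\]
then follows immediately from the sandwiching of $V$, completing the verification.
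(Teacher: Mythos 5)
Your argument verifies conditions (i)--(iii) of Definition~\ref{def:S_delta} directly, whereas the paper's one-sentence proof is modular: it observes that the graph $\rho_1^{-1}(S)\cap\Gamma$ of $F|_S$ is represented by $\{\rho_1^{-1}(S_\delta)\cap\Gamma_\delta\}$ and $\{\rho_1^{-1}(S_{\delta,\eps})\cap\Gamma_{\delta,\eps}\}$ in $G\times H$ (pull $S$ back along $\rho_1$, then intersect with the representing families of $\Gamma$ via Lemma~\ref{le:bool}), and then invokes Lemma~\ref{le:map_F} since $\rho_2$ is an open map on the open set $G\times H$. Your direct check in effect unfolds both of those lemmas, each of which the paper proves only by ``straightforward checking'': your monotonicity and union steps reproduce the content of Lemma~\ref{le:bool}, while your nested-compact-sets argument for (ii) and the $V=\rho_1^{-1}(U_1)\cap U_2$ construction for (iii) are precisely what a full proof of Lemma~\ref{le:map_F} would require. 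Your route is longer but self-contained, and it explicitly isolates the one genuinely delicate point --- that projection need not commute with $\bigcap_\eps$, which your finite-intersection-property argument handles correctly --- whereas the paper buries that point inside the unproved Lemma~\ref{le:map_F}.

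One slip: in the union step you should take $\delta:=\min(\delta_1,\delta_2)$, not $\max$. By Definition~\ref{def:S_delta}, $\delta'>\delta$ implies $S_{\delta'}\subset S_\delta$ and $\Gamma_{\delta'}\subset\Gamma_\delta$, so the sets \emph{shrink} as $\delta$ grows; to force both $x\in S_\delta$ and $(x,y)\in\Gamma_\delta$ you must decrease $\delta$, not increase it. Also state the easy reverse inclusion $\bigcup_\delta\rho_2(\rho_1^{-1}(S_\delta)\cap\Gamma_\delta)\subset F(S)$, which follows from $S_\delta\subset S$ and $\Gamma_\delta\subset\Gamma$.
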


\begin{proof}
The set $\rho_{1}^{-1}(S)$ is represented by the families
$$\{ \rho_{1}^{-1} (S_\delta) \cap \Gamma_\delta \}_{\delta >0}\quad \text{and}\quad
\{ \rho_{1}^{-1} (S_{\delta, \eps}) \cap \Gamma_{\delta ,\eps} \}_{\delta, \eps >0}$$
in $G \times H$, and the projection $\rho_2$ satisfies Lemma~\ref{le:map_F}.
\end{proof}

Along with this general case we will be considering the following important particular cases.

Let $S= \{ \x|\> {\mathcal F}(\x)\} \subset \Real^n$ be a bounded definable set of points
satisfying a Boolean combination ${\mathcal F}$
of equations of the kind $h(\x)=0$ and inequalities of the kind $h(\x)>0$,
where $h:\> \Real^n \to \Real$ are continuous definable functions (e.g., polynomials).
As $G$ take a closed ball of a sufficiently large radius centered at 0.
We now define the representing families $\{ S_{\delta} \}$ and
$\{ S_{\delta, \eps} \}$.

\begin{definition}
For a given finite set $\{ h_1, \ldots , h_k \}$  of functions $h_i:\ \Real^n \to \Real$ define
its {\em sign set} as a non-empty subset in $\Real^n$ of the kind
$${h_{i_1} = \cdots = h_{i_{k_1}} = 0, h_{i_{k_1+1}} > 0, \ldots , h_{i_{k_2}} > 0,
h_{i_{k_2+1}} < 0, \ldots , h_{i_k} < 0},$$
where $i_1, \ldots , i_{k_1}, \ldots , i_{k_2}, \ldots , i_k$ is a permutation of
$1, \ldots , k$.
\end{definition}

Let now $\{ h_1, \ldots , h_k \}$ be the set of all functions in the Boolean formula defining
$S$.
Then $S$ is a disjoint union of some sign sets of $\{ h_1, \ldots , h_k \}$.
The set $S_{\delta}$ is the result of the replacement independently in each sign set of
all inequalities $h>0$ and $h<0$ by $h \ge \delta$ and $h \le -\delta$ respectively.
The set $S_{\delta, \eps}$ is obtained by replacing independently in each sign set
all expressions $h>0$, $h<0$ and $h=0$ by $h \ge \delta$, $h \le -\delta$ and
$-\eps \le h \le \eps$, respectively.
According to Lemma~\ref{le:bool}, the set $S$, being the union of sign sets, is represented by
families $\{ S_{\delta} \}$ and $\{ S_{\delta, \eps} \}$ in $G$.

\begin{example}
Let the closed quadrant $S$ be defined as the union of sign sets
$\{ x>0,\ y>0 \} \cup \{ x>0, \ y=0 \} \cup \{x=0,\ y>0 \} \cup \{ x=y=0 \}$.
Fig.~\ref{fig1} shows the corresponding set $S_{\delta, \eps}$ for $\eps < \delta$.
\end{example}

Now suppose that the set $S \subset \Real^n$, defined as above by a Boolean formula
$\mathcal F$, is not necessarily bounded.
In this case as $G$ take the definable one-point (Alexandrov)
compactification of $\Real^n$.
Note that each function $h$ is continuous in $G \setminus \{ \infty \}$.
Define sets $S_\delta$ and $S_{\delta, \eps}$ as in the bounded case, replacing equations and
inequalities independently in each sign set of $\{ h_1, \ldots , h_k \}$,
and then taking the conjunction of the resulting formula with $|\x|^2 \le 1/\delta$.
Again, $S$ is represented by $\{ S_{\delta} \}$ and $\{ S_{\delta, \eps} \}$ in $G$, and
in the sequel we will refer to this instance as the {\em constructible case}.

\begin{figure}
\epsfxsize=3.0in
\centerline{\epsffile{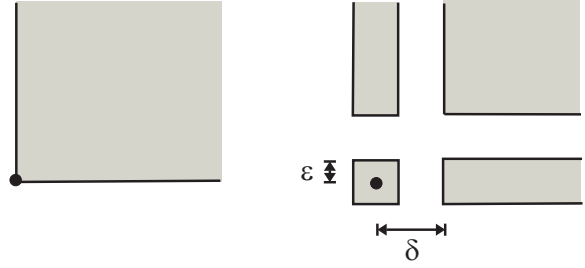}}
\caption{The set $S_{\delta,\eps}$ (right) for the closed
quadrant $S$ (left).}
\label{fig1}
\end{figure}

\begin{definition}
Let ${\mathcal P}:={\mathcal P}(\eps_0, \ldots , \eps_\ell)$ be a
predicate (property) over $(0,1)^{\ell+1}$.
We say that the property ${\mathcal P}$ holds for
$$
0<\eps_0 \ll\eps_1 \ll \cdots \ll\eps_\ell \ll 1,
$$
if there exist definable functions
$f_k:\> (0,1)^{\ell-k} \to (0,1)$, $k=0, \ldots ,\ell$ (with $f_\ell$ being a positive
constant) such that ${\mathcal P}$
holds for any sequence $\eps_0, \ldots , \eps_\ell$ satisfying
$$0 < \eps_k < f_k(\eps_{k+1}, \ldots , \eps_\ell)\quad \text{for}\quad
k=0, \ldots, \ell.$$
\end{definition}

Now we return to the general case in the Definition~\ref{def:S_delta}, which we will refer to,
in what follows, as the {\em definable case}.

\begin{definition}\label{def:telescope}
For a sequence $\eps_0 , \delta_0 ,\eps_1 , \delta_1 , \ldots ,\eps_m , \delta_m$,
where $m \ge 0$, introduce the compact set
$$T(S):=S_{\delta_0,\eps_0}\cup S_{\delta_1,\eps_1} \cup \cdots \cup S_{\delta_m,\eps_m}.$$
\end{definition}

From Definition~\ref{def:S_delta} it is easy to see that for any $m \ge 0$, and for
\begin{equation}\label{eq:ed}
0<\eps_0 \ll\delta_0\ll\eps_1 \ll \delta_1 \ll \cdots \ll\eps_m \ll \delta_m \ll 1,
\end{equation}
there is a surjective map $C:\> {\bf T} \to {\bf S}$
from the finite set ${\bf T}$ of all connected components of $T(S)$ onto the set ${\bf S}$
of all connected components of $S$, such that for any $S' \in {\bf S}$,
$$\bigcup_{T' \in C^{-1}(S')}T'= T(S').$$

\begin{lemma}\label{le:components}
If $m>0$ then $C$ is bijective.
\end{lemma}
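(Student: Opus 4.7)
The plan is to prove injectivity of $C$, since surjectivity is already given. Equivalently, for each $S' \in \mathbf{S}$, I want to show that $T(S') = \bigcup_{T' \in C^{-1}(S')} T'$ is connected, so that the whole of $T(S')$ is a single component of $T(S)$.

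First, I apply property (iii) of Definition~\ref{def:S_delta} at each consecutive pair of levels in (\ref{eq:ed}). Since $\delta_{i-1} \ll \delta_i$, the choice $\delta := \delta_i$, $\delta' := \delta_{i-1}$, $\eps' := \eps_{i-1}$ produces, for each $i = 1, \ldots, m$, an open set $U_i \subset G$ with
$$S_{\delta_i} \subset U_i \subset S_{\delta_{i-1},\eps_{i-1}}.$$
Combined with $S_{\delta_i} \subset S_{\delta_i,\eps_i}$ from property (ii), the compact ``core'' $S_{\delta_i}$ lies in the $G$-interior of $S_{\delta_{i-1},\eps_{i-1}}$ and is also contained in $S_{\delta_i,\eps_i}$, for every $i \geq 1$. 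By definable (Hardt) triviality applied to the one-parameter family $\{S_{\delta_i,\eps}\}_{\eps>0}$ as $\eps \to 0$, for $\eps_i$ sufficiently small relative to $\delta_i$ the components of $S_{\delta_i,\eps_i}$ biject canonically with those of $S_{\delta_i}$; in particular each component of $S_{\delta_i,\eps_i}$ contains a unique ``core'' component of $S_{\delta_i}$. Hence, for $i \geq 1$, every component of $S_{\delta_i,\eps_i}$ contains a point of the $G$-interior of $S_{\delta_{i-1},\eps_{i-1}}$, so that adding the slice $S_{\delta_i,\eps_i}$ to the union of the lower-level slices produces no new connected components of $T(S)$. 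Iterating downwards in $i$, every component of $T(S)$ is an $S_{\delta_0,\eps_0}$-component together with its higher-level attachments.

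The decisive use of $m \geq 1$ is the complementary ``upward'' bridging: the condition $\eps_i \gg \delta_{i-1}$ in (\ref{eq:ed}) guarantees that each $\eps_i$-thickening $S_{\delta_i,\eps_i}$ ($i \geq 1$) is sufficiently wide in every ``equation direction'' to overlap the otherwise disjoint pieces of $S_{\delta_0,\eps_0}$ that belong to a common component of $S$, while being anchored by its core $S_{\delta_i} \subset \bigcup_\delta S_\delta = S$, which prevents it from bridging across different components of $S$. In the constructible case this is directly visible from the sign-set decomposition (cf.\ Fig.~\ref{fig1}). The resulting equivalence relation on components of $S_{\delta_0,\eps_0}$ identifies exactly those lying in a common component of $S$, giving a bijection of $\pi_0(T(S))$ with $\pi_0(S)$ compatible with $C$. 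The main obstacle is formalizing this last point in the general definable setting: one must combine property (iii) in its full strength (an open $U$ for arbitrarily small $\eps'$) with the finiteness of topological types of the definable family $\{S_{\delta,\eps}\}$ to rule out stray merges between $T(S)$-components whose cores sit in different components of $S$.
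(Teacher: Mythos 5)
Your approach is genuinely different from the paper's, and it has a real gap exactly where you flag one. The first paragraph is sound as far as it goes: property~(iii) together with a Hardt-type argument gives, for each $i\ge 1$, that every component of $S_{\delta_i,\eps_i}$ meets the lower slice $S_{\delta_{i-1},\eps_{i-1}}$ through its core, so that $\pi_0(T(S))$ is a quotient of $\pi_0(S_{\delta_0,\eps_0})$. But that only rules out \emph{new} components; injectivity of $C$ requires the converse merging, namely that any two components of $S_{\delta_0,\eps_0}$ lying in the same component $S'$ of $S$ are actually joined inside $T(S')$. Your second paragraph asserts this ``upward bridging'' via the picture that $\eps_i\gg\delta_{i-1}$ makes $S_{\delta_i,\eps_i}$ wide enough ``in every equation direction,'' but there are no equation directions in the definable setting of Definition~\ref{def:S_delta}, and Hardt triviality does not supply the claim either: it governs the topology of $S_{\delta,\eps}$ as $\eps\to 0$ at fixed $\delta$, not the interaction between $S_{\delta_i,\eps_i}$ and the much finer slice $S_{\delta_0,\eps_0}$. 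This is precisely the point at which the hypothesis $m>0$ must do real work, and your argument does not establish it; you acknowledge the gap yourself.

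The paper sidesteps the difficulty by a reduction to dimension one, which is the device you are missing. Given $\x,\y\in S_{\delta_i,\eps_i}$, definable curve selection gives paths $\x_\eps,\y_\eps$ inside $S_{\delta_i,\eps_i}\subset T(S)$ from $\x,\y$ to limits $\x_0,\y_0\in S_{\delta_i}\subset S$; since $S$ is connected and definable it is definably path-connected, so one can choose a compact connected definable curve $\Gamma\subset S$ through $\x_0$ and $\y_0$. The restricted families $\{S_\delta\cap\Gamma\}$, $\{S_{\delta,\eps}\cap\Gamma\}$ represent $\Gamma$ in itself, hence $T(\Gamma)\subset T(S)$, and for the one-dimensional compact $\Gamma$ the connectedness of $T(\Gamma)$ when $m\ge 1$ reduces to an elementary statement about finite unions of arcs: on a curve, $\eps_i\gg\delta_{i-1}$ visibly makes the $i$-th slice bridge the gaps left by the $(i-1)$-th. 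It is this one-dimensional reduction that converts your informal bridging picture into a proof; without it the argument is incomplete.
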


\begin{proof}
Let $S$ be connected and $m>0$.
We prove that $T(S)$ is connected.
Let $\x, \y \in S_{\delta_i, \eps_i} \subset T(S)$.
Let $\x_{\eps},\> \y_{\eps}$ be a definable connected curves such that
$\x_{\eps_i} =\x$, $\x_0:=\lim_{\eps\searrow 0} \x_{\eps}\in S_{\delta_i}$,
$\y_{\eps_i} =\y$, and $\y_0:=\lim_{\eps\searrow 0} \y_{\eps}\in S_{\delta_i}$.
Let $\Gamma \subset S$ be a connected compact definable curve containing
$\x_0$ and $\y_0$.
Then $\Gamma$ is represented by the families
$\{ S_{\delta} \cap \Gamma \}$ and $\{ S_{\delta, \eps} \cap \Gamma \}$ in $\Gamma$, hence
$T(\Gamma) \subset T(S)$.
It is easy to see that, under the condition $m>0$, the one-dimensional $T(\Gamma)$ is connected.
It follows that $\x$ and $\y$ belong to a connected definable curve in $T(S)$.
\end{proof}

In what follows we denote $T:= T(S)$, and let $m>0$.
We assume that $S$ is connected in order to make the homotopy groups $\pi_k(S)$ and
$\pi_k(T)$ independent of a base point.

\begin{theorem}\label{th:main}
\begin{itemize}
\item[(i)]
For (\ref{eq:ed}) and every $1 \le k \le m$, there are epimorphisms
$$\psi_k:\> \pi_k(T) \to \pi_k(S),$$
$$\varphi_k:\> H_k(T) \to H_k(S),$$
in particular, ${\rm rank}\ H_k(S) \le {\rm rank}\ H_k(T)$.
\item[(ii)]
In the constructible case, for (\ref{eq:ed}) and every $1 \le k \le m-1$, $\psi_k$
and $\varphi_k$ are
isomorphisms, in particular, ${\rm rank}\ H_k(S) = {\rm rank}\ H_k(T)$.
Moreover, if $m \ge \dim (S)$, then $T \simeq S$.
\end{itemize}
\end{theorem}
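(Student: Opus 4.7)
The plan is to compare $T$ and $S$ through the nested cores $S_{\delta_m}\subset\cdots\subset S_{\delta_0}\subset S$. The central observation is that $S=\bigcup_{\delta>0}S_\delta$ is a directed union of compact definable sets, and any compact o-minimal set has finitely generated homotopy and homology via definable triangulation. Hence $\pi_k(S)=\varinjlim_{\delta\to 0^+}\pi_k(S_\delta)$ and $H_k(S)=\varinjlim_{\delta\to 0^+}H_k(S_\delta)$, and once (\ref{eq:ed}) is chosen with $\delta_0$ small enough (as a function of finitely many generators), the inclusion $S_{\delta_0}\hookrightarrow S$ is already surjective on every $\pi_k$ and $H_k$. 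In parallel, I would build a retraction $r\colon T\to S_{\delta_0}$ and define $\psi_k$ and $\varphi_k$ as the compositions $\pi_k(T)\xrightarrow{r_*}\pi_k(S_{\delta_0})\to\pi_k(S)$ and $H_k(T)\xrightarrow{r_*}H_k(S_{\delta_0})\to H_k(S)$.

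The retraction $r$ is assembled inductively along the telescope. Property~(iii) applied to $(\delta',\delta)=(\delta_i,\delta_{i+1})$ yields an open set $U_i\subset G$ with $S_{\delta_{i+1}}\subset U_i\subset S_{\delta_i,\eps_i}$, so every core $S_{\delta_{i+1}}$ lies in the interior of the preceding layer $S_{\delta_i,\eps_i}$. Using the definable family $\{S_{\delta_i,\eps}\}_{\eps}$ together with o-minimal triviality over cofinal subintervals of $\eps$, one extracts a strong deformation retraction $H^{(i)}\colon S_{\delta_i,\eps_i}\times[0,1]\to S_{\delta_i,\eps_i}$ of $S_{\delta_i,\eps_i}$ onto $S_{\delta_i}$. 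Composing $H^{(m)}$, then $H^{(m-1)}$, down to $H^{(0)}$ --- passing through the $U_i$'s so that at each stage the image lies inside the next layer --- yields $r\colon T\to S_{\delta_0}$. Surjectivity of $\psi_k$ and $\varphi_k$ in part~(i) is then immediate: any class in $\pi_k(S)$ or cycle in $H_k(S)$ is represented by a compact object whose image lies in some $S_\delta\subset S_{\delta_0}\subset T$, and $r$ fixes $S_{\delta_0}$ pointwise.

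For part~(ii) in the constructible case, the explicit polynomial (or Pfaffian) structure lets one verify that each pair $(S_{\delta_i,\eps_i},S_{\delta_i})$ is a definable tubular neighborhood --- via a radial deformation of the signs of the functions $h_j$ appearing in the sign-set description of $S$ --- so $r$ becomes a genuine deformation retraction $T\to S_{\delta_0}$. Combined with the fact that $S_{\delta_0}\simeq S$ for $\delta_0$ sufficiently small, an instance of the homotopy-equivalence construction of \cite{GV05}, this promotes $\psi_k$ and $\varphi_k$ to isomorphisms for $k\le m-1$. The loss at $k=m$ mirrors the connectedness argument in Lemma~\ref{le:components}: injectivity on $\pi_m$ or $H_m$ would require an additional outer layer to cap off $m$-cycles whose representatives can cross the boundary of $S_{\delta_0,\eps_0}$. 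When $m\ge\dim S$ no such top dimension is present and a cellular dimension count upgrades the isomorphisms in every $\pi_k$, $H_k$ to an actual homotopy equivalence $T\simeq S$. The main obstacle throughout is making the telescope gluing rigorous: one needs uniform definable triviality of $\{S_{\delta,\eps}\}$ along each step and compatibility of the retractions on the overlaps $U_i$, which is automatic in the constructible case via cylindrical algebraic or Pfaffian decomposition but in the general o-minimal setting must be extracted from an abstract cell-decomposition argument applied simultaneously to all parameters.
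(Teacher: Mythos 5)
Your argument hinges on the existence of a retraction $r\colon T\to S_{\delta_0}$, and this is where it breaks down: no such retraction exists even in the simplest constructible examples. Take $S$ to be the closed quadrant in $\Real^2$, written as the union of four sign sets $\{x>0,y>0\}\cup\{x>0,y=0\}\cup\{x=0,y>0\}\cup\{x=y=0\}$. Then $S_\delta$ is the disjoint union $\{x\ge\delta,y\ge\delta\}\cup\{x\ge\delta,y=0\}\cup\{x=0,y\ge\delta\}\cup\{(0,0)\}$, which has \emph{four} connected components, whereas $T$ is connected for $m>0$ (Lemma~\ref{le:components}). A continuous map $r\colon T\to S_{\delta_0}$ with $r|_{S_{\delta_0}}=\mathrm{id}$ would have to send a connected space onto a four-component one while fixing all four components, which is impossible. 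This also shows that your assertion $S_{\delta_0}\simeq S$ for $\delta_0$ small (invoked to conclude part (ii)) is false: $S$ is contractible and $S_{\delta_0}$ has four components. The whole reason the paper glues $m+1$ layers into the telescope $T$ is precisely that no single inner set $S_\delta$, and no single layer $S_{\delta,\eps}$, recovers the topology of $S$; your plan collapses $T$ onto the innermost layer, destroying exactly the information the telescope is designed to retain.

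There is a second, more local defect in the construction of $r$ itself. Each $H^{(i)}$ is declared on $S_{\delta_i,\eps_i}$, but $T$ is the \emph{union} $\bigcup_i S_{\delta_i,\eps_i}$, not contained in any one layer, so the composition $H^{(0)}\circ\cdots\circ H^{(m)}$ is not even defined on all of $T$; the sets $U_i$ from Definition~\ref{def:S_delta}(iii) only contain the next core $S_{\delta_{i+1}}$, not the next full layer $S_{\delta_{i+1},\eps_{i+1}}$. The paper takes an entirely different route: it triangulates $G$ compatibly with the family $\{S_\delta\}$, builds inside the barycentric subdivision a combinatorial model $V$ of $T$ (Definition~\ref{def:V}) whose pieces $V_B$ form an open cover with $(m-1)$-connected intersections (Lemma~\ref{le:V_B}, resting on the purely combinatorial Proposition~\ref{prop:combinatorics}), applies Bj\"orner's Nerve Theorem and Whitehead's theorem to relate $V$ to $S$ (Theorem~\ref{th:VS}), and then compares $T$ with $V$ via a sandwich $V'\hookrightarrow T\hookrightarrow V''$ of rescaled telescopes and Hardt triviality (Lemmas~\ref{le:VsubT}--\ref{le:TV1}), with a separability condition on $(R,\{S_\delta\})$ accounting for the stronger constructible statement (ii). None of this is a retraction onto a single $S_\delta$, and no such retraction can work.
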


The plan of the proof of Theorem~\ref{th:main} is as follows.
We consider a simplicial complex $R$ in $\Real^n$ such that it is a triangulation of $G$,
and $S$ is a union of some open simplices of $R$.
For any sequence $\eps_0 , \delta_0 ,\eps_1 , \delta_1 , \ldots ,\eps_m , \delta_m$
we construct a subset $V$ of the complex $R$, which is a combinatorial analogy of $T$, and
prove that there are isomorphisms of $k$-homotopy groups of $V$ and $S$ for $k \le m-1$
and an epimorphism for $k=m$.
We prove the same for homology groups.
We then show that for (\ref{eq:ed}) there are epimorphisms
$\pi_k(T) \to \pi_k(V)$ and $H_k(T) \to H_k(V)$ for $k \le m$.
We prove that if the pair $(R, S_{\delta})$ satisfies a certain ``separability'' property
(Definition~\ref{def:separ}), then $V \simeq T$.
In particular, in the constructible case $(R, S_{\delta})$ is always separable.
This completes the proof.

\begin{remark}
We conjecture that in the {\em definable} case the statement (ii) of Theorem~\ref{th:main}
is also true, i.e., for (\ref{eq:ed}) and every $1 \le k \le m-1$, the homomorphisms $\psi_k$,
$\varphi_k$ are isomorphisms, and $T \simeq S$ when $m \ge \dim (S)$.
\end{remark}

\section{Topological background}

In this section we formulate some topological definitions and statements which we will use in
further proofs.

Recall that a continuous map between topological spaces $f:\> X \to Y$ is called a {\em weak
homotopy equivalence} if for every $j >0$ the induced homomorphism of homotopy groups
$f_{\# j}:\> \pi_j(X) \to \pi_j(Y)$ is an isomorphism.

\begin{theorem}[Whitehead Theorem on weak homotopy equivalence, \cite{Spanier},
7.6.24]\label{th:Whitehead1}
A map between connected CW-complexes is a weak homotopy equivalence iff it is a
homotopy equivalence.
\end{theorem}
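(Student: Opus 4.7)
The statement has an easy direction and a hard direction, so my plan is to dispose of the easy one first and then concentrate on the converse.

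\textbf{Easy direction.} If $f\colon X\to Y$ is a homotopy equivalence with homotopy inverse $g$, then $g_{\#j}\circ f_{\#j}=\mathrm{id}$ and $f_{\#j}\circ g_{\#j}=\mathrm{id}$ on $\pi_j$ for every $j$, so $f_{\#j}$ is an isomorphism and $f$ is a weak homotopy equivalence.

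\textbf{Hard direction, reduction to an inclusion.} Given a weak homotopy equivalence $f\colon X\to Y$, I would replace $f$ by an inclusion by passing to the mapping cylinder $M_f=(X\times I)\sqcup Y/\bigl((x,1)\sim f(x)\bigr)$. The standard deformation retraction of $M_f$ onto $Y$ is a homotopy equivalence, and $X\hookrightarrow M_f$ (as $X\times\{0\}$) sits as a subcomplex. Since homotopy equivalence of $f$ is equivalent to homotopy equivalence of the inclusion $X\hookrightarrow M_f$, it suffices to prove the following: \emph{if $(Y,X)$ is a CW-pair and the inclusion is a weak homotopy equivalence, then $X$ is a deformation retract of $Y$}.

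\textbf{Hard direction, main construction.} From the long exact sequence of the pair $(Y,X)$ and the hypothesis that $\pi_j(X)\to\pi_j(Y)$ is an isomorphism for every $j$, I conclude that $\pi_n(Y,X)=0$ for all $n\ge 1$. The core technical input is then the \emph{compression lemma}: every map of pairs $(D^n,S^{n-1})\to(Y,X)$ is homotopic, rel $S^{n-1}$, to a map into $X$. This is proved directly from $\pi_n(Y,X)=0$ together with the homotopy extension property for the cofibration $S^{n-1}\hookrightarrow D^n$. With this in hand, I would build a deformation retraction $H\colon Y\times I\to Y$ of $Y$ onto $X$ by induction on the skeleta $Y^{(n)}\cup X$. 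At stage $n$, each $n$-cell $e^n_\alpha$ of $Y\setminus X$ has an attaching map whose boundary already lies in $X$ after the previously constructed homotopy; the compression lemma applied to the characteristic map then provides a homotopy pushing $e^n_\alpha$ into $X$ while keeping $\partial e^n_\alpha$ fixed. These homotopies assemble, via HEP for the CW-pair, into a homotopy on $Y^{(n)}\cup X$ that extends the one on $Y^{(n-1)}\cup X$.

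\textbf{Passing to the limit and conclusion.} To finish, I would concatenate the homotopies produced at each stage with rapidly shrinking time-intervals (the telescope trick: perform the $n$-th homotopy on $[1-2^{-n},1-2^{-n-1}]$). Continuity at time $t=1$ follows because each point of $Y$ lies in some finite skeleton and is therefore moved only finitely often, and CW-complexes carry the weak topology with respect to their skeleta, so the resulting $H\colon Y\times I\to Y$ is continuous. By construction $H$ is a deformation retraction of $Y$ onto $X$, which gives the homotopy equivalence. The step I expect to be the main obstacle is the simultaneous control of the homotopies across all cells of a given dimension and the verification of continuity at the limit; the key technical ingredient that makes it work is precisely the CW hypothesis, through HEP for CW-pairs and the weak topology on the skeletal filtration.
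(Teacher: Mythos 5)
This is a background result that the paper does not prove at all: it is quoted verbatim from Spanier (7.6.24) as a tool, so there is no proof in the paper to compare against. Your argument is the standard modern (Hatcher-style) proof and is correct in outline: reduce to a CW pair via the mapping cylinder, deduce $\pi_n(Y,X)=0$ from the long exact sequence, and compress skeleton by skeleton, assembling the homotopies with the telescope trick and the weak topology. Two small points deserve attention. First, for $(M_f,X)$ to be a CW pair you need $f$ to be cellular, so you should invoke cellular approximation (replacing $f$ by a homotopic cellular map changes nothing, since homotopic maps are simultaneously homotopy equivalences or not); without this the compression machinery does not literally apply to $M_f$. Second, your skeletal induction with HEP is essentially a re-proof of the compression lemma itself; the cleaner route is to prove the compression lemma once for an arbitrary CW pair and then apply it to the identity map of $(Y,X)$, which immediately yields the deformation retraction. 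Spanier's own proof, which the paper cites, takes a different route: he first shows a weak homotopy equivalence induces bijections $[P,X]\to[P,Y]$ for every CW complex $P$ and then concludes by a formal (Yoneda-type) argument; your approach is more hands-on and geometric, Spanier's is more functorial, and both are standard.
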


Let $f:\ X \to Y$ be a continuous map between path-connected topological spaces.

\begin{theorem}[Whitehead Theorem, \cite{Spanier}, 7.5.9]\label{th:Whitehead2}
If there is $k >0$ such that the induced homomorphism of homotopy groups
$f_{\# j}:\> \pi_j(X) \to \pi_j(Y)$
is an isomorphism for $j<k$ and an epimorphism for $j=k$, then the induced homomorphism of
homology groups $f_{\ast j}:\> H_j(X) \to H_j(Y)$ is an isomorphism for $j<k$ and an
epimorphism for $j=k$.
\end{theorem}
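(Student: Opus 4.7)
The plan is to convert homotopy information into homology information using the relative Hurewicz theorem, after first reducing to the case of an inclusion via the mapping cylinder. Write $M_f$ for the mapping cylinder of $f$, and factor $f$ as the inclusion $i\colon X\hookrightarrow M_f$ followed by the deformation retraction $M_f\to Y$. Since that retraction is a homotopy equivalence, it induces isomorphisms on both homotopy and homology, so it suffices to prove the theorem for the cofibration $i$ and the pair $(M_f,X)$.

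Next, I would apply the long exact sequence of the pair $(M_f,X)$ in homotopy. By exactness, the hypothesis that $f_{\#j}$ is an isomorphism for $j<k$ and an epimorphism for $j=k$ is equivalent to the vanishing $\pi_j(M_f,X)=0$ for $1\le j\le k$; that is, the pair $(M_f,X)$ is $k$-connected. The relative Hurewicz theorem then gives $H_j(M_f,X)=0$ for $1\le j\le k$. Feeding this vanishing into the long exact sequence of the pair in homology immediately yields that $i_{\ast j}$, and therefore $f_{\ast j}$, is an isomorphism for $j<k$ and an epimorphism for $j=k$, as required.

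The main obstacle is justifying the use of the relative Hurewicz theorem when $X$ and $Y$ are only assumed to be path-connected: the cleanest form of the theorem needs either that the pair be simply connected, or at least that the action of $\pi_1(X)$ on the relative homotopy groups be trivial. Spanier's formulation in 7.5.9 handles this by phrasing the hypothesis in terms of $n$-connectedness of the pair, which is strong enough to make the $\pi_1$-action on $\pi_k(M_f,X)$ innocuous and the Hurewicz map behave correctly. Beyond that one delicate point, every step is a routine diagram chase through the two long exact sequences.
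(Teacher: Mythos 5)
The paper does not prove this statement; it is quoted verbatim as a classical result from Spanier (Theorem 7.5.9) and used as a black box. Your mapping-cylinder reduction followed by the relative Hurewicz theorem and the two long exact sequences is exactly the standard textbook argument behind Spanier's 7.5.9, and your outline is correct, including your (accurate) flag that the relative Hurewicz theorem must be invoked in a form that tolerates a nontrivial $\pi_1(X)$ -- what one actually uses is the vanishing $H_j(M_f,X)=0$ for $1\le j\le k$, which survives because $k$-connectedness of the pair kills $\pi_j(M_f,X)$ for $j\le k$ and the Hurewicz map factors through the quotient by the $\pi_1(X)$-action (with the $j=1$ case handled separately by abelianization, since relative Hurewicz formally starts at $n=2$).
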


\begin{definition}[\cite{Bjorner}]
A map $f:\> P \to Q$, where $P$ and $Q$ are posets with order relations $\preceq_P$ and
$\preceq_Q$ respectively, is called {\em poset map} if, for $\x, \y \in P$,
$\x \preceq_P \y$ implies $f(\x) \preceq_Q f(\y)$.
With a poset $P$ is associated the simplicial complex $\Delta(P)$, called {\em order
complex}, whose simplices are {\em chains} (totally ordered subsets) of $P$.
Each poset map $f$ induces the simplicial map $f:\> \Delta(P) \to \Delta(Q)$.
\end{definition}

\begin{theorem}[\cite{Bjorner}, Th. 2]\label{th:Bjorner2}
Let $P$ and $Q$ be connected posets and $f:\> P \to Q$ a poset map.
Suppose that the fibre $f^{-1}(\Delta(Q_{\preceq q}))$ is $k$-connected for all $q \in Q$.
Then the induced homomorphism $f_{\# j}:\> \pi_j(\Delta(P)) \to \pi_j(\Delta (Q))$ is an
isomorphism for all $j \le k$ and an epimorphism for $j=k+1$.
\end{theorem}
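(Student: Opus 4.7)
The plan is to prove the theorem by induction on the cardinality of $Q$, reducing the general case to finite posets by a direct-limit argument. The base case $Q = \{q\}$ is immediate: $\Delta(Q)$ is a point, and the conclusion reduces to the statement that $\Delta(P) = f^{-1}(\Delta(Q_{\preceq q}))$ is $k$-connected, which is the hypothesis.

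For the inductive step, pick a maximal element $q_0 \in Q$, and set $Q' = Q \setminus \{q_0\}$, $P^+ = f^{-1}(Q_{\preceq q_0})$, $P' = f^{-1}(Q')$, and $P_0 = f^{-1}(Q_{\prec q_0}) = P^+ \cap P'$. Maximality of $q_0$ yields subcomplex decompositions
$$\Delta(Q) = \Delta(Q') \cup \Delta(Q_{\preceq q_0}), \quad \Delta(Q') \cap \Delta(Q_{\preceq q_0}) = \Delta(Q_{\prec q_0}),$$
together with the analogous decomposition $\Delta(P) = \Delta(P') \cup \Delta(P^+)$, $\Delta(P') \cap \Delta(P^+) = \Delta(P_0)$ respected by $f$; the latter uses that any chain in $P$ containing a vertex mapping to $q_0$ must lie entirely in $P^+$, because $q_0$ is maximal in $Q$. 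Three restriction maps now require analysis: $f_+ : \Delta(P^+) \to \Delta(Q_{\preceq q_0})$, $f' : \Delta(P') \to \Delta(Q')$, and $f_0 : \Delta(P_0) \to \Delta(Q_{\prec q_0})$. The map $f_+$ sends the $k$-connected space $\Delta(P^+)$ (by hypothesis) to the cone $\Delta(Q_{\preceq q_0})$, hence is automatically a $(k+1)$-equivalence. The other two satisfy the hypothesis of the theorem for the smaller posets $Q'$ and $Q_{\prec q_0}$: maximality of $q_0$ gives $(Q')_{\preceq q} = Q_{\preceq q}$ for every $q \in Q'$, and $(Q_{\prec q_0})_{\preceq q} = Q_{\preceq q}$ for every $q \in Q_{\prec q_0}$, so the $k$-connectivity hypothesis on preimages is inherited. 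By induction, $f'$ and $f_0$ are also $(k+1)$-equivalences.

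Finally, apply the gluing lemma for pushout squares of $(k+1)$-equivalences: given two pushout diagrams of subcomplex inclusions connected by vertical $(k+1)$-equivalences on the three non-pushout corners, the induced map on pushouts is again a $(k+1)$-equivalence. This yields that $f : \Delta(P) \to \Delta(Q)$ is a $(k+1)$-equivalence, proving the theorem. The main obstacle is establishing this gluing lemma in the range $j \le k+1$: Mayer--Vietoris and Seifert--van Kampen handle homology and $\pi_1$, while for higher $\pi_j$ one needs a Blakers--Massey / homotopy excision argument that carefully tracks the connectivities attached to each cell. For infinite $Q$, an additional compactness argument expressing $\Delta(Q)$ as a direct limit of finite subcomplexes $\Delta(Q_\alpha)$ with $f$-compatible decompositions on the $P$ side is needed to reduce to the finite case before running the induction.
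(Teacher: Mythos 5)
Your route is genuinely different from the one the paper relies on: the paper simply invokes Bj\"orner's Theorem~2, whose proof is a carrier-type construction of a map $g:\Delta^{(k+1)}(Q)\to\Delta(P)$ with $f\circ g$ homotopic to the inclusion of the skeleton, and Remark~\ref{re:Bjorner2} extracts the $j=k+1$ epimorphism from that same $g$; you instead peel off a maximal element $q_0$ and glue. The skeleton of your argument is sound: the chain decompositions $\Delta(Q)=\Delta(Q')\cup\Delta(Q_{\preceq q_0})$ and $\Delta(P)=\Delta(P')\cup\Delta(P^+)$ are correct, the fibre hypothesis does pass to $Q'$ and $Q_{\prec q_0}$ by maximality, the map over the cone $\Delta(Q_{\preceq q_0})$ is trivially a $(k+1)$-equivalence, and the gluing statement you need (all three vertical maps $(k+1)$-equivalences, subcomplex/cofibration pushouts) is true; in the form you use it, it does not even need Blakers--Massey: pass to mapping cylinders, subdivide a disc $(D^j,S^{j-1})\to(M,X)$, note the interface subcomplex lands in the middle cylinder, compress it into $X_0$ using $(k+1)$-connectivity of that pair, then compress the two halves cell by cell.

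There are, however, concrete gaps. First, the induction cannot retain the hypothesis that the posets are connected: $Q'$, $Q_{\prec q_0}$ and their preimages may be disconnected or empty, so the statement you induct on must be reformulated without connectedness (bijection on $\pi_0$, isomorphism on $\pi_j$ for $j\le k$ and epimorphism on $\pi_{k+1}$ at every basepoint); you then need to prove the $\pi_0$-bijectivity from the fibre hypothesis (it holds, by chaining nonempty connected fibres along comparabilities, but it is an argument, not automatic), and the gluing lemma must be stated and applied in this basepoint-sensitive, possibly disconnected form. Second, your reduction of infinite $Q$ to finite subposets fails as described: for an arbitrary finite subposet $Q_\alpha$ one has $(Q_\alpha)_{\preceq q}\subsetneq Q_{\preceq q}$ in general, its preimage need not be $k$-connected, so the hypothesis is not inherited and the restricted maps need not be $(k+1)$-equivalences. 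The repair is to exhaust $Q$ by \emph{down-sets}, e.g.\ initial segments of a linear extension, along which the hypothesis restricts verbatim; successor stages are exactly your finite step and limit stages follow from compactness of spheres and discs. (For the paper's application $Q$ is finite, so this only matters for the theorem as stated in full generality.) Finally, the gluing lemma itself is only named, not proved; that is acceptable if you cite the standard gluing theorem for $n$-equivalences, but as written your ``main obstacle'' is left unresolved.
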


\begin{remark}\label{re:Bjorner2}
In the formulation and proof of this theorem in \cite{Bjorner} the statement that
$f_{\# k+1}$ is an epimorphism, is missing.
Here is how it follows from the proof of Theorem~2 in \cite{Bjorner}.
In the proof, a map $g:\> \Delta^{(k+1)}(Q) \to \Delta (P)$ is defined, where
$\Delta^{(k+1)}(Q)$ is the $(k+1)$-dimensional skeleton of $\Delta(Q)$, such that
$f \circ g:\> \Delta^{(k+1)}(Q) \to \Delta (Q)$ is homotopic to the identity map $id$.
Then the induced homomorphism
$$f_{\# k+1} \circ g_{\# k+1}=(f \circ g)_{\# k+1}= id_{\# k+1}:\>
\pi_{k+1}(\Delta^{(k+1)} (Q)) \to \pi_{k+1}(\Delta (Q))$$
is an epimorphism, since any map of a $j$-dimensional sphere to $\Delta (Q)$ is homotopic
to a map of the sphere  to $\Delta^{(j)}(Q)$.
It follows that $f_{\# k+1}$ is also an epimorphism.
\end{remark}

\begin{corollary}[Vietoris-Begle Theorem]\label{cor:VB}
Let $X$ and $Y$ be connected simplicial complexes and $f:\> X \to Y$ a simplicial map.
\begin{itemize}
\item[(i)]
If the fibre $f^{-1}(B)$ is $k$-connected for every closed simplex $B$ in $Y$,
then the induced homomorphism $f_{\# j}:\> \pi_j(X) \to \pi_j(Y)$ is an isomorphism for
all $j \le k$ and an epimorphism for $j=k+1$.
\item[(ii)]
If the fibre $f^{-1}(B)$ is contractible, then $X \simeq Y$.
\end{itemize}
\end{corollary}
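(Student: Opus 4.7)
The plan is to deduce both parts of Corollary~\ref{cor:VB} from Bj\"orner's Theorem~\ref{th:Bjorner2} by identifying each simplicial complex with the order complex of its face poset. First, I would let $P$ and $Q$ denote the face posets of $X$ and $Y$, ordered by inclusion of simplices. Their order complexes $\Delta(P)$ and $\Delta(Q)$ are the barycentric subdivisions of $X$ and $Y$, hence canonically homeomorphic to $X$ and $Y$; since $X$ and $Y$ are connected, so are $P$ and $Q$. The simplicial map $f$ sends each simplex $\sigma$ of $X$ to a simplex $f(\sigma)$ of $Y$ in an inclusion-preserving way, so it defines a poset map $f\colon P \to Q$, and the induced simplicial map $\Delta(P) \to \Delta(Q)$ is identified with the original $f$ up to barycentric subdivision.

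Second, I would match up the two notions of fibre. For a closed simplex $B$ of $Y$, viewed as an element $q \in Q$, the principal order ideal $Q_{\preceq q}$ is the poset of faces of $B$, and $\Delta(Q_{\preceq q})$ is the barycentric subdivision of $B$. The preimage $f^{-1}(\Delta(Q_{\preceq q}))$ appearing in Theorem~\ref{th:Bjorner2} is, by construction, the order complex of the subposet $\{\sigma \in P \mid f(\sigma) \subseteq B\}$, which is precisely the barycentric subdivision of the subcomplex $f^{-1}(B) \subset X$. Consequently, $f^{-1}(\Delta(Q_{\preceq q}))$ is $k$-connected if and only if $f^{-1}(B)$ is $k$-connected, so under the hypothesis of (i) the poset map $f$ satisfies the hypothesis of Theorem~\ref{th:Bjorner2}, which directly yields the assertion that $f_{\# j}$ is an isomorphism for $j \le k$ and an epimorphism for $j=k+1$.

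For part (ii), a contractible fibre is non-empty and $k$-connected for every $k \ge 0$, so by (i) the induced map $f_{\# j}\colon \pi_j(X) \to \pi_j(Y)$ is an isomorphism for every $j \ge 0$; that is, $f$ is a weak homotopy equivalence. Since simplicial complexes carry a natural CW-structure and both $X$ and $Y$ are connected, Theorem~\ref{th:Whitehead1} upgrades $f$ to a genuine homotopy equivalence, giving $X \simeq Y$.

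The main obstacle is the bookkeeping in the middle step: verifying that the simplicial preimage $f^{-1}(B) \subset X$ really does correspond, after barycentric subdivision, to the poset-theoretic fibre $f^{-1}(\Delta(Q_{\preceq q}))$ of Bj\"orner's formulation. Once this identification is confirmed, both parts follow essentially formally from Theorems~\ref{th:Whitehead1} and~\ref{th:Bjorner2}.
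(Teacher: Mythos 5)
Your proposal is correct and follows essentially the same route as the paper: pass to barycentric subdivisions viewed as order complexes of the face posets, identify $f^{-1}(\Delta(Q_{\preceq q}))$ with the barycentric subdivision of $f^{-1}(B)$, apply Theorem~\ref{th:Bjorner2}, and finish (ii) with Whitehead's theorem. The only difference is that you spell out the fibre identification explicitly, which the paper leaves implicit.
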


\begin{proof}
(i)\quad
Consider barycentric subdivisions $\widehat X$ and $\widehat Y$ of complexes $X$ and $Y$
respectively.
Note that $\widehat X= \Delta (P)$ and $\widehat Y= \Delta (Q)$ where $P$ and $Q$ are
{\em simplex posets} of $X$ and $Y$ respectively (i.e., closed simplices ordered by
containment).
For a closed simplex $B \in Q$ the subcomplex $\Delta (Q_{\preceq B})$ of $\widehat Y$ is
the union of all simplices of the barycentric subdivision of $B$.
Now (i) follows from Theorem~\ref{th:Bjorner2}.

(ii)\quad Since the fibre $f^{-1}(B)$ is contractible, according to (i), the induced
homomorphisms $f_{\# j}$ are isomorphisms for all $j > 0$, hence, by Whitehead
theorem on weak homotopy equivalence (Theorem~\ref{th:Whitehead1}),
$f$ induces the homotopy equivalence $X \simeq Y$.
\end{proof}

\begin{definition}\label{def:nerve}
The {\em nerve} of a family $\{ X_i \}_{i \in I}$ of sets is the (abstract) simplicial complex
${\mathcal N}$ defined on the vertex set $I$ so that a simplex $\sigma \subset I$ is
in ${\mathcal N}$ iff $\bigcap_{i \in \sigma} X_i \neq \emptyset$.
\end{definition}

Let $X$ be a connected regular CW-complex and $\{ X_i \}_{i \in I}$ be a family of its
subcomplexes such that $X= \bigcup_{i \in I} X_i$.
Let $|{\mathcal N}|$ denote the geometric realization of the nerve ${\mathcal N}$ of
$\{ X_i \}_{i \in I}$.

\begin{theorem}[Nerve Theorem, \cite{Bjorner}, Th. 6]\label{th:nerve}
\begin{itemize}
\item[(i)]
If every nonempty finite intersection $X_{i_1} \cap \cdots \cap X_{i_t}$, $t \ge 1$, is
$(k-t+1)$-connected, then there is a map $f:\> X \to |{\mathcal N}|$ such that the induced
homomorphism $f_{\# j}:\> \pi_j(X) \to \pi_j(|{\mathcal N}|)$ is an
isomorphism for all $j \le k$ and an epimorphism for $j=k+1$.
\item[(ii)]
If every nonempty finite intersection $X_{i_1} \cap \cdots \cap X_{i_t}$, $t \ge 1$, is
contractible, then $X \simeq |{\mathcal N}|$.
\end{itemize}
\end{theorem}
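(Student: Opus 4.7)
The plan is to realize both $X$ and $|\mathcal{N}|$ as order complexes of posets, bridge them by a Grothendieck-type ``total space'' poset, and then apply Theorem~\ref{th:Bjorner2} and Corollary~\ref{cor:VB} to each of the two coordinate projections. Let $Q_1$ be the poset of simplices of $\mathcal{N}$ ordered by inclusion, so that $\Delta(Q_1)$ is the barycentric subdivision of $|\mathcal{N}|$, and let $Q_2$ be the cell poset of the regular CW-complex $X$, so that $\Delta(Q_2)$ is homeomorphic to $X$. Introduce the bridge poset
$$P := \{ (\sigma, B) :\> \sigma \in Q_1,\ B \in Q_2,\ B \subset \textstyle\bigcap_{i \in \sigma} X_i \},$$
ordered componentwise, and let $p_1 \colon P \to Q_1$ and $p_2 \colon P \to Q_2$ be the coordinate projections; both are poset maps.

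First I would analyze $p_2$. For a fixed closed cell $B$ of $X$, the fibre $p_2^{-1}(Q_2^{\preceq B})$ consists of all $(\tau, B')$ with $B' \subset B$ and $\tau$ a face of $\sigma(B') := \{i :\> B' \subset X_i\}$. A straightforward collapsing argument (each element is dominated by $(\sigma(B'), B')$ for the given $B'$, and the poset of $B'$ under $B$ contracts onto $B$) shows this fibre to be contractible, so Corollary~\ref{cor:VB}(ii) gives $\Delta(P) \simeq X$.

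The crux of the proof is the analysis of $p_1$. Fix $\sigma \in Q_1$ with $|\sigma| = t$. The fibre $p_1^{-1}(Q_1^{\preceq \sigma})$ is the order complex of pairs $(\tau, B)$ with $\tau \subset \sigma$ and $B \subset \bigcap_{i \in \tau} X_i$, i.e.\ the Grothendieck construction of the diagram $\tau \mapsto$ (cell poset of $\bigcap_{i \in \tau} X_i$) indexed by the face poset of $\sigma$; its order complex is homotopy equivalent to the homotopy colimit of that diagram. I would establish $k$-connectivity of this fibre by induction on $t$. The base case $t=1$ is exactly the hypothesis that each $X_i$ is $k$-connected, and the inductive step decomposes the fibre along a distinguished vertex of $\sigma$ into two overlapping pieces whose intersection is the fibre associated with the opposite face; a Mayer--Vietoris / Blakers--Massey argument, combined with the $(k - t + 1)$-connectivity of $\bigcap_{i \in \sigma} X_i$, yields $k$-connectivity at stage $t$. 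Theorem~\ref{th:Bjorner2} applied to $p_1$ then produces an isomorphism on $\pi_j$ for $j \le k$ and an epimorphism for $j = k+1$; composing with a homotopy inverse of $\Delta(P) \simeq X$ yields the required $f \colon X \to |\mathcal{N}|$ in part (i).

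For part (ii), contractibility of every intersection $X_{i_1} \cap \cdots \cap X_{i_t}$ makes the fibres of $p_1$ contractible at every stage of the inductive argument, so $p_1$ induces isomorphisms on all $\pi_j$; since $X$ and $|\mathcal{N}|$ are CW-complexes, Theorem~\ref{th:Whitehead1} upgrades this to an honest homotopy equivalence $X \simeq |\mathcal{N}|$. The main obstacle will be the inductive connectivity argument for $p_1$: the numerology $(k - t + 1)$ is suggestive, but carrying out the induction carefully, keeping track of connectivity after each homotopy-pushout step and ensuring that the induction closes on both isomorphisms and epimorphisms at the correct degrees, is the real technical content and the place where the proof could easily slip if the bookkeeping on the $k$-dependence is not airtight.
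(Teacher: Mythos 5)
The paper itself does not prove this statement: it is imported verbatim from Bj\"orner's paper (Theorem~6 there), and the only thing the present authors add is the remark following it, explaining that the ``epimorphism for $j=k+1$'' clause, while not stated by Bj\"orner, is extracted from Bj\"orner's proof exactly as in Remark~\ref{re:Bjorner2}. So there is no proof in the paper for you to be compared against; what you have produced is a reconstruction of the cited result. Your overall architecture --- a Grothendieck-type bridge poset $P$ with two coordinate projections, with $p_2$ giving $\Delta(P)\simeq X$ and $p_1$ analyzed via Theorem~\ref{th:Bjorner2} --- is indeed the shape of the argument in Bj\"orner's paper, and you correctly identify that the $k$-connectivity of the fibres of $p_1$ (the ``nerve is a full simplex'' special case, handled by induction plus Mayer--Vietoris, van Kampen and Hurewicz) is where the real work lies.

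There is, however, a concrete gap in the step you call a ``straightforward collapsing argument'' for $p_2$. You claim $p_2^{-1}(Q_2^{\preceq B})$ contracts because every $(\tau,B')$ is dominated by $(\sigma(B'),B')$. That domination is true pointwise, but the assignment $(\tau,B')\mapsto(\sigma(B'),B')$ is \emph{not} a poset map for your componentwise order: if $B_1'\le B_2'$ then $\sigma(B_1')\supseteq\sigma(B_2')$ (the cell--to--simplex map $\sigma(\cdot)$ is antitone), so the first coordinate is reversed rather than preserved, and the Quillen closure-operator lemma does not apply. The fibre is in fact contractible, but one has to argue differently --- for instance, project $p_2^{-1}(Q_2^{\preceq B})$ onto $\{B'\le B\}$ and check that the \emph{upper} fibres over each $B_0\le B$ admit the genuine descending poset retraction $(\tau,B'')\mapsto(\tau,B_0)$ (valid since $\tau\subseteq\sigma(B'')\subseteq\sigma(B_0)$), whose image is the contractible face poset of $\sigma(B_0)$; the dual Quillen fiber lemma then gives contractibility of $p_2^{-1}(Q_2^{\preceq B})$. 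Without a repair of this kind, the claim $\Delta(P)\simeq X$ is not established, and the rest of the argument has nothing to stand on. Finally, note that if you instead reverse the order on the $\tau$-coordinate (which is what makes your pointwise domination into an actual retraction), then the downward $p_1$-fibre over a $t$-simplex $\sigma$ retracts onto $\bigcap_{i\in\sigma}X_i$, which is only $(k-t+1)$-connected, and Theorem~\ref{th:Bjorner2} no longer applies; so the two projections cannot both be handled by the naive retraction in the same order, and one of them necessarily requires the more delicate argument you glossed over.
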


\begin{remark}
As with Theorem~\ref{th:Bjorner2}, in the formulation and proof of this theorem in
\cite{Bjorner} the statement that $f_{\# k+1}$ is an epimorphism, is missing.
This statement follows from the proof of Theorem~6 in \cite{Bjorner} by the same
argument as described in Remark~\ref{re:Bjorner2}.
\end{remark}

\begin{remark}\label{re:simp_nerve}
Let $X$ be a triangulated set, $\{ X_i \}_{i \in I}$ be a family
of all of its simplices, and the nerve ${\mathcal N}$ is defined on the index set $I$
so that a simplex $\sigma \subset I$ is in ${\mathcal N}$ iff the family
$\{ X_i \}_{i \in \sigma}$, after a the suitable ordering, forms a $|\sigma|$-flag
(see Definition~\ref{def:flag} below).
It is easy to see that this version of the nerve can be reduced to the one in the
Definition~\ref{def:nerve}, so that the Theorem~\ref{th:nerve} holds true.
\end{remark}

\begin{definition}
For two continuous maps $f_1: X_1 \to Y$ and $f_2: X_2 \to Y,$ the {\em fibred product}
is defined as
$$X_1 \times_Y X_2 :=\{ (\textbf{x}_1, \textbf{x}_2) \in X_1 \times X_2|\>
f_1(\textbf{x}_1)=f_2(\textbf{x}_2) \}.$$
\end{definition}

\begin{theorem}[\cite{GVZ}, Th. 1]\label{th:spectral}
Let $f: X \to Y$ be a continuous closed surjective o-minimal map.
Then there is a spectral sequence $E^{r}_{p,q}$
converging to $H_{\ast}(Y)$ with $E_{p,q}^{1}=H_q(W_p)$, where
$W_p:= \underbrace{X\times_Y\cdots\times_Y X}_{p+1\> \>{\rm times}}$.
\end{theorem}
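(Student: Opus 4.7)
The plan is to realize the asserted spectral sequence as the one associated to the skeletal filtration of the geometric realization of the \v{C}ech nerve of $f$, and then to identify its abutment with $H_*(Y)$ by means of Vietoris--Begle.

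First I would assemble the fibre powers $W_p$ into an augmented simplicial definable space $\varepsilon\colon W_\bullet\to Y$: the face map $d_i\colon W_p\to W_{p-1}$ drops the $i$-th factor, degeneracies are given by partial diagonals, and the augmentation $\varepsilon_p\colon W_p\to Y$ sends $(x_0,\dots,x_p)$ to the common value $f(x_0)=\dots=f(x_p)$. Taking geometric realization, $|W_\bullet|=\bigsqcup_p W_p\times\Delta^p/{\sim}$ carries a canonical continuous definable map $|\varepsilon|\colon|W_\bullet|\to Y$ and a filtration by skeleta $F_p|W_\bullet|$ (the image of $\bigsqcup_{q\le p}W_q\times\Delta^q$). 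The standard computation for a simplicial space identifies the $E^1$-page of the filtration spectral sequence as $E^1_{p,q}\cong H_q(W_p)$, with $d^1$ the alternating sum of the face maps, converging to $H_*(|W_\bullet|)$.

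It remains to show that $|\varepsilon|$ is a homology equivalence, so that the abutment is $H_*(Y)$. For each $y\in Y$ the fibre $|\varepsilon|^{-1}(y)$ is the realization of the simplicial space $[p]\mapsto f^{-1}(y)^{p+1}$; this is the classifying space of the indiscrete equivalence relation on the nonempty (by surjectivity) set $f^{-1}(y)$, and is therefore contractible. To promote this pointwise contractibility to a global homology (in fact weak homotopy) equivalence, I would invoke the o-minimal triangulation theorem for definable maps to model $|\varepsilon|$ by a simplicial map between finite simplicial complexes with contractible preimages of closed simplices, and then apply Corollary~\ref{cor:VB}(ii).

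The main obstacle is the triangulation step: one must produce a coherent o-minimal triangulation of $Y$ together with triangulations of the $W_p$ under which every face map, degeneracy, and augmentation is simplicial. The hypothesis that $f$ is closed is used crucially here, both to ensure that the $\varepsilon_p(W_p)$ assemble into a legitimate stratification of $Y$ and to guarantee that fibrewise contractibility propagates through the triangulation; without closedness a fibre $f^{-1}(y)$ could fail to control the geometry of $|\varepsilon|$ in a neighbourhood of $y$. Once this combinatorial model is in place, the contractibility of $[p]\mapsto f^{-1}(y)^{p+1}$ is elementary, Corollary~\ref{cor:VB}(ii) gives $|\varepsilon|\colon|W_\bullet|\simeq Y$, and the skeletal filtration delivers the spectral sequence in the required form.
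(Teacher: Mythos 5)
Two remarks, one about provenance and one about a genuine gap.

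First, note that the paper does not prove this statement at all: it is quoted from \cite{GVZ}, Th.~1, and used as a black box. So the comparison is with the argument in that reference, and your overall strategy is in substance the same as the one used there: assemble the fibred powers $W_p$ into the \v{C}ech nerve of $f$, obtain the spectral sequence with $E^1_{p,q}=H_q(W_p)$ and $d^1$ the alternating sum of face maps from the skeletal filtration of the realization, and identify the abutment with $H_*(Y)$ by a Vietoris--Begle type argument applied to the augmentation, whose fibres over $y\in Y$ are joins of copies of the nonempty set $f^{-1}(y)$; closedness of $f$ enters precisely in that identification.

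The genuine gap is in your step promoting fibrewise contractibility to a global equivalence. You propose to invoke ``the o-minimal triangulation theorem for definable maps'' to make $|\varepsilon|\colon |W_\bullet|\to Y$ simplicial and then apply Corollary~\ref{cor:VB}(ii). This fails for two reasons. The full realization $|W_\bullet|$ is infinite dimensional and is not a definable set, so no o-minimal triangulation theorem applies to it. More fundamentally, o-minimal structures provide triangulation of definable \emph{sets} and of definable \emph{functions into} $\Real$ (\cite{Coste}, Th.~4.4 and 4.5 --- exactly what the present paper uses elsewhere), but there is no theorem triangulating an arbitrary definable map $f\colon X\to Y$ simultaneously with its source and target; even in the semialgebraic category this is a well-known open problem, not a tool you may cite. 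The standard repair, and essentially what \cite{GVZ} does, is to truncate: work with the finite (fat, i.e.\ face-maps-only) realization through simplicial degree $p$, which is a definable (compactifiable) set mapping to $Y$ by a closed surjective map whose fibre over $y$ is the $(p+1)$-fold join $f^{-1}(y)\ast\cdots\ast f^{-1}(y)$, hence $(p-1)$-connected rather than contractible; then apply a \emph{topological} Vietoris--Begle/Smale-type mapping theorem for closed surjective maps with highly connected fibres --- this is where closedness is genuinely needed, since pointwise contractibility of fibres does not imply a weak equivalence for non-closed maps --- to identify the homology of the truncated realization with $H_k(Y)$ for $k<p$, and finally let $p\to\infty$ (for each fixed degree the spectral sequence stabilizes at a finite stage). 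Using the fat realization also removes a secondary issue in your sketch: with degeneracies included, the identification $E^1_{p,q}\cong H_q(W_p)$ for the skeletal filtration requires the simplicial space to be proper (good), whereas the fat realization gives this $E^1$ term directly, with $d^1$ still the alternating sum of the face maps.
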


\begin{corollary}\label{cor:spectral}
For $f: X \to Y$ as in Theorem~\ref{th:spectral} and for any $k \ge 0$
$${\rm b}_k(Y) \le \sum_{p+q=k} {\rm b}_q(W_p),$$
where ${\rm b}_k := {\rm rank}\> H_k$ is the $k$-th Betti number.
\end{corollary}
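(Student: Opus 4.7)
The plan is to read off the bound directly from the convergence of the spectral sequence of Theorem~\ref{th:spectral}, using only the two standard facts that (a) each page of a first-quadrant spectral sequence is a subquotient of the previous one, and (b) the $E^\infty$ page is the associated graded of a bounded filtration on the abutment.

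First I would invoke Theorem~\ref{th:spectral} to obtain the spectral sequence with $E^1_{p,q} = H_q(W_p)$ converging to $H_\ast(Y)$. Since each differential $d^r\colon E^r_{p,q}\to E^r_{p-r,q+r-1}$ is a homomorphism of Abelian groups and $E^{r+1}_{p,q} = \ker d^r/\operatorname{im} d^r$ is a subquotient of $E^r_{p,q}$, the rank is non-increasing from page to page, so
$$\operatorname{rank} E^\infty_{p,q} \le \operatorname{rank} E^1_{p,q} = \operatorname{rank} H_q(W_p) = {\rm b}_q(W_p).$$
In particular, since each $W_p$ is a definable (hence o-minimal) set with finitely generated homology, these ranks are all finite.

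Next, convergence of the spectral sequence means that $H_k(Y)$ carries a filtration $0 = F_{-1}H_k(Y) \subset F_0 H_k(Y) \subset \cdots \subset F_k H_k(Y) = H_k(Y)$ whose successive quotients $F_p H_k(Y)/F_{p-1}H_k(Y)$ are isomorphic to $E^\infty_{p,k-p}$. Taking ranks (which are additive on short exact sequences of finitely generated Abelian groups) gives
$${\rm b}_k(Y) = \operatorname{rank} H_k(Y) = \sum_{p+q=k}\operatorname{rank} E^\infty_{p,q} \le \sum_{p+q=k}{\rm b}_q(W_p),$$
which is the desired inequality.

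There is no real obstacle; the only mild point to keep in mind is verifying that the first-quadrant shape of the spectral sequence guarantees the filtration on $H_k(Y)$ is finite (so that additivity of ranks on the associated graded is valid), and that ranks of homology groups of the $W_p$ are finite in the o-minimal setting, both of which are standard.
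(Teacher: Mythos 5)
Your argument is correct and is exactly the standard spectral-sequence computation that the paper takes for granted (the paper states the corollary without proof, treating it as an immediate consequence of Theorem~\ref{th:spectral}). You've simply spelled out the two standard facts — that $E^\infty$ is a subquotient of $E^1$ so ranks can only decrease, and that the bounded filtration on $H_k(Y)$ has associated graded $\bigoplus_{p+q=k}E^\infty_{p,q}$ — which is the right way to see it.
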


\section{Simplicial construction}\label{sec:simp_constr}

Since $G$ and $S$ are definable, they are triangulable (\cite{Coste}, Th. 4.4), i.e.,
there exists a
finite simplicial complex $R= \{ \Delta_j \}$ and a definable homeomorphism
$\Phi:\> |R| \to G$, where $|R|$ is the geometric realization of $R$, such that
$S$ is a union of images under $\Phi$ of some simplices of $R$.
By a {\em simplex} we always mean an {\em open simplex}.
If $\Delta$ is a simplex, then $\overline \Delta$ denotes its closure.
In what follows we will ignore the distinction between simplices of $|R|$
and their images in $G$.

\begin{definition}\label{def:flag}
For a simplex $\Delta$ of $S$, its {\em subsimplex} is a simplex $\Delta' \neq \Delta$ such
that $\Delta' \subset \overline \Delta$.
A {\em $k$-flag} of simplices of $R$ is a sequence $\Delta_{i_0}, \ldots ,\Delta_{i_k}$
such that $\Delta_{i_\nu}$ is a subsimplex of $\Delta_{i_{\nu-1}}$ for $\nu = 1, \ldots ,k$.
\end{definition}

\begin{definition}\label{def:marked}
The set $S$ is {\em marked} if for every pair $(\Delta', \Delta)$ of simplices of $S$, such
that $\Delta'$ is a subsimplex of $\Delta$, the simplex $\Delta'$ is classified as  either
{\em hard} or {\em soft} subsimplex of $\Delta.$
If $\Delta'$ is not in S, it is always soft.
\end{definition}

In what follows we assume that $S$ is marked.

Let $\widehat R$ be the barycentric subdivision of $R$.
Then each vertex $v_j$ of $\widehat R$ is the center of a simplex $\Delta_j$ of $R$.
Let $B=B(j_0, \ldots ,j_k)$ be a $k$-simplex of $\widehat R$ having vertices
$v_{j_0}, \ldots ,v_{j_k}$.
Assume that the vertices of $B$ are ordered so that $\dim \Delta_{j_0} > \cdots >
\dim \Delta_{j_k}$.
Then $B$ corresponds to a $k$-flag
$\Delta_{j_0}, \ldots ,\Delta_{j_k}$ of simplices of $R$.
Let $\widehat S$ be the set of simplices of $\widehat R$ which belong to $S$.
Then $S$ is the union of all simplices of $\widehat S$.

\begin{definition}
The {\em core} $C(B)$ of a simplex $B=B(j_0, \ldots ,j_k)$ of $\widehat S$ is the maximal
subset $\{ j_0, \ldots ,j_p \}$
of the set $\{ j_0, \ldots ,j_k \}$ such that $\Delta_{j_\nu}$ is a {\em hard}
subsimplex of $\Delta_{j_\mu}$ for all $\mu < \nu \le p$.
Note that $j_0$ is always in $C(B)$, in particular, $C(B) \neq \emptyset$.
Assume that for a simplex $B$ not in $\widehat S$, the core $C(B)$ is empty.
\end{definition}

\begin{lemma}\label{le:core}
Let $B=B(i_0, \ldots ,i_k)$ be a simplex in $\widehat S$, and $K=K(j_0, \ldots ,j_{\ell})$
be a simplex in $\widehat R$, with $B \subset \overline K$, i.e., $I=\{ i_0, \ldots ,i_k \}
\subset J= \{ j_0, \ldots ,j_{\ell} \}$.
Then $I \setminus C(B) \subset J \setminus C(K)$.

\end{lemma}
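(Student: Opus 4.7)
The plan is a direct index-chase. Since $I \subset J$, I would first write $i_\nu = j_{t_\nu}$ for a strictly increasing sequence $t_0 < t_1 < \cdots < t_k$ in $\{0, \ldots, \ell\}$, which is forced because the orderings on $I$ and $J$ are dictated by strictly decreasing dimension of the associated simplices. Setting $C(B) = \{i_0, \ldots, i_p\}$ and $C(K) = \{j_0, \ldots, j_q\}$ (using the convention $p = -1$ or $q = -1$ if the corresponding core is empty), the desired inclusion $I \setminus C(B) \subset J \setminus C(K)$ is equivalent to the implication: if $i_\nu \in C(K)$ then $i_\nu \in C(B)$.

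Before attacking the main case I would dispose of the trivial situation $K \notin \widehat S$, where $C(K) = \emptyset$ and the inclusion reduces to $I \setminus C(B) \subset J$, which is immediate from $I \subset J$. For the remainder I assume $K \in \widehat S$, so that $q \ge 0$ is well-defined.

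Now fix $i_\nu \in I \cap C(K)$, which translates to $t_\nu \le q$. To certify $i_\nu \in C(B)$ it suffices to check, for every pair $0 \le \mu < \nu' \le \nu$, that $\Delta_{i_{\nu'}}$ is a hard subsimplex of $\Delta_{i_\mu}$. Rewriting in the $J$-indexing, $i_\mu = j_{t_\mu}$ and $i_{\nu'} = j_{t_{\nu'}}$ with $t_\mu < t_{\nu'} \le t_\nu \le q$, so the pair $(t_\mu, t_{\nu'})$ lies in the initial segment certified hard by $C(K)$. Hence $\Delta_{j_{t_{\nu'}}}$ is a hard subsimplex of $\Delta_{j_{t_\mu}}$, which is precisely the relation in $B$ that was required.

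The key observation is that the defining property of a core, \emph{all} pairs within the initial segment are hard, is automatically inherited by any subchain whose image in $J$ lies in the initial segment $\{j_0, \ldots, j_q\}$. There is no real obstacle; the only subtlety worth flagging is the edge case $K \notin \widehat S$, which I handle upfront, together with the remark that the canonical identification $I \hookrightarrow J$ preserves the flag order so that the index translation $i_\nu \mapsto j_{t_\nu}$ is meaningful.
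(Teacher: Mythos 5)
Your argument is correct and is exactly the index-chase the paper has in mind when it says ``straightforward consequence of the definitions'': recast $I\setminus C(B)\subset J\setminus C(K)$ as $I\cap C(K)\subset C(B)$, observe that the hardness condition defining $C(K)$ restricts to any subchain of $J$ lying inside the initial segment $\{j_0,\dots,j_q\}$, and invoke maximality of $C(B)$. The treatment of the edge case $K\notin\widehat S$ is also correct; no further comparison is needed.
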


\begin{proof}
Straightforward consequence of the definitions.
\end{proof}

\begin{definition}
For two simplices $B$ and $B'$ of $\widehat S$, let $B' \succeq B$ if either
$B'$ is a subsimplex of $B$ (reverse inclusion) and $C(B') \cap C(B) = \emptyset$, or $B'=B$.
If $B' \succeq B$ and $B' \neq B$, then we write $B' \succ B$.
Lemma~\ref{le:core} implies that $\succeq$ is a partial order on the set of all simplices of
$\widehat S$.
The {\em rank} $r(\widehat S)$ of $\widehat S$ is the maximal length $r$ of a chain
$\Delta_0 \succ \cdots \succ \Delta_r$ of simplices in $\widehat S$.
Let $B$ be a simplex in $\widehat S$.
The set $S_B$ of simplices $B' \subset \overline B \cap \widehat S$ is a poset with partial
order induced from $(\widehat S, \succeq)$.
The {\em rank $r(S_B)$ of} $S_B$ is the maximal length of its chain.
\end{definition}

\begin{definition}\label{def:K_B}
Let simplices $B$ and $K$ be as in Lemma~\ref{le:core}.
For $0 < \delta <1$, define
$$B(\delta):= \Bigg\{ \sum_{i_\nu \in I} t_{i_\nu}v_{i_\nu} \in B(i_0, \ldots ,i_k) \Bigg|\>
\sum_{i_\nu \in C(B)} t_{i_\nu}> \delta  \Bigg\}.$$
For $0 < \eps <1$ and $0 < \delta <1$, define
$$
K_B(\delta, \eps):=   \Bigg\{ \sum_{j_\nu \in J} t_{j_\nu}v_{j_\nu}
\in K(j_0, \ldots ,j_\ell) \Bigg|\>
\sum_{i_\nu \in C(B)} t_{i_\nu}> \delta,\> \sum_{i_\nu \in I}t_{i_\nu}>1 - \eps,
$$
$$
\forall i_\nu \in I\> \forall j_\mu \in (J \setminus I)\ (t_{i_\nu} > t_{j_\mu}) \Bigg\}.
$$
\end{definition}

\begin{definition}\label{def:V}
Let $B$ be a simplex in $\widehat S$.
Fix some $m \ge 0$ and a sequence
$\eps_0, \delta_0, \eps_1, \delta_1, \ldots ,\eps_m, \delta_m $.
Define $V_B$ as the union of sets $K_{B'}(\delta_i, \eps_i)$ over all simplices
$B' \in S_B$, simplices $K$ of $\widehat R$ such that $B \subset \overline K$,
and $i=0, \ldots, m$.
Define $V$ as the union of the sets $V_B$ over all simplices $B$ of $\widehat S$.
\end{definition}

\section{Topological relations between $V$ and $S$}

\begin{lemma}\label{le:K_B_intersection}
Let $B=B(i_0, \ldots ,i_k)$ be a simplex in $\widehat S$, and $K=K(j_0, \ldots ,j_{\ell})$
be a simplex in $\widehat R$, with $B \subset \overline K$.
Then
$$K_B(\delta, \eps) \cap K_B(\delta', \eps')=K_B(\max \{\delta, \delta' \},
\min \{ \eps, \eps' \}),$$
for all $0 < \delta, \eps, \delta', \eps' <1$.
\end{lemma}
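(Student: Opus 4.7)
The statement is a direct unfolding of Definition~\ref{def:K_B}, and the plan is simply to read off the three defining inequalities and see how they interact under intersection. Writing $\mathbf{t}=\sum_{j_\nu\in J} t_{j_\nu}v_{j_\nu}$ for a generic point of $K(j_0,\ldots,j_\ell)$, the condition ``$\mathbf{t}\in K_B(\delta,\eps)$'' is the conjunction of
\begin{itemize}
\item[(a)] $\sum_{i_\nu\in C(B)} t_{i_\nu}>\delta$,
\item[(b)] $\sum_{i_\nu\in I} t_{i_\nu}>1-\eps$,
\item[(c)] $t_{i_\nu}>t_{j_\mu}$ for every $i_\nu\in I$ and every $j_\mu\in J\setminus I$.
\end{itemize}
The key observation is that (a) depends on $\delta$ alone, (b) depends on $\eps$ alone, and (c) is independent of both parameters.

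To prove the equality, I would just intersect the three conditions pairwise. The conjunction of (a) for $\delta$ with (a) for $\delta'$ is $\sum_{i_\nu\in C(B)} t_{i_\nu}>\max\{\delta,\delta'\}$; the conjunction of (b) for $\eps$ with (b) for $\eps'$ is $\sum_{i_\nu\in I} t_{i_\nu}>1-\min\{\eps,\eps'\}$; and (c) is identical in both sets, so it is preserved. These are precisely the inequalities defining $K_B(\max\{\delta,\delta'\},\min\{\eps,\eps'\})$, which proves both inclusions simultaneously.

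There is no real obstacle here; the only thing worth flagging is that the two parameters $\delta$ and $\eps$ behave oppositely under intersection (max versus min), which is visible from the fact that increasing $\delta$ shrinks $K_B(\delta,\eps)$ while increasing $\eps$ enlarges it. Consequently the proof is a one-line observation and can be presented as a straightforward check of Definition~\ref{def:K_B}, with no further machinery needed.
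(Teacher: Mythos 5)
Your proof is correct and takes exactly the route the paper intends: the paper's own proof is the one-line remark ``Straightforward consequence of the definitions,'' and your argument is simply the explicit unfolding of Definition~\ref{def:K_B}, separating the $\delta$-dependent, $\eps$-dependent, and parameter-free conditions and intersecting them coordinatewise. Nothing is missing.
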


\begin{proof}
Straightforward consequence of the definitions.
\end{proof}

\begin{lemma}\label{le:B_B'}
For any two simplices $B$ and $B'$ of $\widehat S$, a simplex $K $ of $\widehat R$ such that
$B$ and $B'$ are subsimplices of $K$, and all $0 < \delta, \eps, \delta', \eps' <1$,
\begin{itemize}
\item[(i)]
if $K_B(\delta, \eps) \cap K_{B'}(\delta', \eps') \neq \emptyset$, then either
$B \subset \overline {B'}$ or $B' \subset \overline B$;
\item[(ii)]
$K_B(\delta, \eps) \cap K_{B'}(\delta', \eps')$ is convex.
\end{itemize}
\end{lemma}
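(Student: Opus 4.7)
The plan is to read off both parts directly from the inequalities defining $K_B(\delta,\eps)$. Let $I$, $I'$, $J$ be the vertex index sets of $B$, $B'$, $K$ respectively, so $I,I'\subset J$. Any point of $K$ is written uniquely as $\sum_{j_\nu\in J} t_{j_\nu}v_{j_\nu}$ with $t_{j_\nu}>0$ and $\sum t_{j_\nu}=1$, and all three conditions in Definition~\ref{def:K_B}, namely $\sum_{i_\nu\in C(B)} t_{i_\nu}>\delta$, $\sum_{i_\nu\in I} t_{i_\nu}>1-\eps$, and $t_{i_\nu}>t_{j_\mu}$ for $i_\nu\in I$, $j_\mu\in J\setminus I$, are strict linear inequalities in the barycentric coordinates.

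For part (ii), I would simply observe that $K$ itself is a (relatively open) convex subset of the affine span of $v_{j_0},\dots,v_{j_\ell}$, and $K_B(\delta,\eps)$ is the intersection of $K$ with finitely many open half-spaces, hence convex. The intersection $K_B(\delta,\eps)\cap K_{B'}(\delta',\eps')$ is then a finite intersection of convex sets, therefore convex.

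For part (i), I would argue by contradiction. Suppose $p=\sum_{j_\nu\in J}t_{j_\nu}v_{j_\nu}$ lies in $K_B(\delta,\eps)\cap K_{B'}(\delta',\eps')$, and that neither $I\subset I'$ nor $I'\subset I$. Pick $i\in I\setminus I'$ and $i'\in I'\setminus I$. Since $p\in K_B(\delta,\eps)$ and $i'\in I'\subset J$ but $i'\notin I$, the third condition in Definition~\ref{def:K_B} gives $t_i>t_{i'}$. Swapping the roles of $B$ and $B'$, the fact that $p\in K_{B'}(\delta',\eps')$ with $i\in J\setminus I'$ yields $t_{i'}>t_i$, a contradiction. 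Therefore $I\subset I'$ or $I'\subset I$, which, translated back to the simplices of the barycentric subdivision, means $B\subset\overline{B'}$ or $B'\subset\overline{B}$.

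There is no real obstacle here: both parts are bookkeeping with the defining inequalities. The only subtlety is checking that the indices used in the strict inequalities $t_{i_\nu}>t_{j_\mu}$ are allowed to range over all of $J\setminus I$ (respectively $J\setminus I'$), which is exactly how Definition~\ref{def:K_B} reads; this is what makes the ``cross-comparison'' between a vertex of $I\setminus I'$ and one of $I'\setminus I$ possible in both directions, producing the contradiction.
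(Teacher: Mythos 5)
Your proof is correct and fills in exactly the routine unpacking of Definition~\ref{def:K_B} that the paper dismisses as a ``straightforward consequence of the definitions'': part (ii) by noting each $K_B(\delta,\eps)$ is $K$ cut by open half-spaces, and part (i) by the cross-comparison $t_i>t_{i'}$ and $t_{i'}>t_i$ for $i\in I\setminus I'$, $i'\in I'\setminus I$. Same approach as intended.
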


\begin{proof}
Straightforward consequence of the definitions.
\end{proof}

\begin{lemma}\label{le:Z_K}
Let $K$ be a simplex of $\widehat R$, and let $B_0, \ldots , B_k$ be a flag of simplices of
$\widehat S$, with $B_0  \subset \overline K$.
Then for (\ref{eq:ed})
and a sequence $i_0,j_0, \ldots ,i_k,j_k$ of integers in $\{ 0, 1, \ldots ,m \}$,
the intersection
$$Z_K(i_0,j_0, \ldots ,i_k,j_k):= K_{B_0}(\delta_{i_0}, \eps_{j_0}) \cap \cdots \cap
K_{B_k}(\delta_{i_k}, \eps_{j_k})$$
is non-empty if and only if $B_\mu \succ B_\nu$ implies $j_\mu > i_\nu$
for any $\mu,\ \nu \in \{0, 1, \ldots ,k \}$.
\end{lemma}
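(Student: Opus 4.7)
The plan is to prove both directions of the iff separately. The necessity reduces to a clean combinatorial observation, while the sufficiency requires an explicit construction of a point in $Z_K$.

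For the necessity, suppose $t \in Z_K$ and $B_\mu \succ B_\nu$; I will show $j_\mu > i_\nu$. The key observation is that the relation $B_\mu \succ B_\nu$ actually forces the stronger disjointness $B_\mu \cap C(B_\nu) = \emptyset$. Indeed, the top vertex of the flag corresponding to $B_\mu$ lies in $C(B_\mu)$ because the largest-dimensional simplex in any flag is always in its core; hence by $C(B_\mu) \cap C(B_\nu) = \emptyset$ it lies outside $C(B_\nu)$, and as a vertex of the flag of $B_\nu$ it must therefore sit strictly below the initial segment $C(B_\nu)$. Since the vertices of $B_\mu$'s flag appear in $B_\nu$'s flag in the same order, all of them lie below $C(B_\nu)$, so $B_\mu \cap C(B_\nu) = \emptyset$ and consequently $C(B_\nu) \subset K \setminus B_\mu$. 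The first defining inequality of $K_{B_\nu}(\delta_{i_\nu}, \eps_{j_\nu})$ in Definition~\ref{def:K_B} yields $\sum_{v \in C(B_\nu)} t_v > \delta_{i_\nu}$, while the second one for $K_{B_\mu}(\delta_{i_\mu}, \eps_{j_\mu})$ yields $\sum_{w \in K \setminus B_\mu} t_w < \eps_{j_\mu}$. Combining gives $\delta_{i_\nu} < \eps_{j_\mu}$, which under the asymptotic ordering (\ref{eq:ed}) forces $j_\mu > i_\nu$.

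For the sufficiency, I will assume the combinatorial condition and construct a point $t \in Z_K$ by assigning barycentric coordinates according to depth in the flag. Let $d(v) = \max\{\nu \in \{0, \ldots, k\} : v \in B_\nu\}$ for $v \in B_0$, with the convention $d(v) = -1$ otherwise, and let $t_v$ be a strictly increasing function of $d(v)$ whose inter-layer jumps are chosen in accordance with (\ref{eq:ed}), normalized so that $\sum_v t_v = 1$. The third defining condition of Definition~\ref{def:K_B} then holds by construction. The second condition for each $\nu$ is verified by noting that $\sum_{d(v) < \nu} t_v$ can be made smaller than $\eps_{j_\nu}$ by choosing the jumps sufficiently steep relative to $\eps_{j_\nu}$. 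For the first condition, the top vertex of the flag of each $B_\nu$ lies in $C(B_\nu)$ and has $d(v) \ge \nu$, and the hypothesis $B_\mu \succ B_\nu \Rightarrow j_\mu > i_\nu$ is precisely what guarantees that the total mass on $C(B_\nu)$ exceeds $\delta_{i_\nu}$.

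The main obstacle I anticipate is the sufficiency direction: ensuring that the core condition holds for every $\nu$ simultaneously. The core $C(B_\nu)$ typically does not align with a single depth layer of the flag, and its vertices may drop out of deeper $B_{\nu'}$ at different depths; the combinatorial hypothesis is calibrated so that whenever a vertex of $C(B_\nu)$ fails to persist into the deepest layers (and hence receives smaller mass), the corresponding index $j_\mu$ for the culprit $B_\mu$ exceeds $i_\nu$ by enough, via the gaps in (\ref{eq:ed}), that the residual mass on $C(B_\nu)$ still dominates $\delta_{i_\nu}$. Turning this intuition into a verification requires a careful induction along the $\succ$-order together with a choice of parameter sequence compatible with (\ref{eq:ed}). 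The necessity direction, by contrast, reduces cleanly to the structural fact that $B_\mu \succ B_\nu$ implies $B_\mu \cap C(B_\nu) = \emptyset$.
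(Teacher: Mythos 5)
Your necessity argument is correct and supplies detail that the paper leaves to the reader with the phrase ``the necessity of the condition is straightforward.'' The key structural observation you isolate --- that $B_\mu \succ B_\nu$ forces the vertex set of $B_\mu$ to be disjoint from $C(B_\nu)$ --- is the right one; you derive it from the fact that the core is an initial segment, and it also follows directly from Lemma~\ref{le:core}, which applied to $B_\mu \subset \overline{B_\nu}$ gives the cleaner identity $I_\mu \cap C(B_\nu) = C(B_\mu) \cap C(B_\nu)$, where $I_\mu$ is the vertex-index set of $B_\mu$. From there, combining the first condition for $K_{B_\nu}$ with the second for $K_{B_\mu}$ gives $\delta_{i_\nu} < \eps_{j_\mu}$, hence $j_\mu > i_\nu$, exactly as you say.

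For sufficiency, your plan is a genuinely different construction from the paper's, but it is only a plan. The paper builds the point $v=\sum t_j v_j$ by anchoring weights on specific vertices: it sets $t_{\ell_\nu}:=\delta_{i_\nu}$ where $\ell_\nu$ is the \emph{last} (smallest-dimensional) vertex of $C(B_\nu)$, gives small $\gamma$-increments to the remaining vertices according to flag position, and dumps the residual mass onto the terminal vertex of $B_k$. Your scheme instead makes $t_v$ a pure function of depth $d(v)$, which is conceptually simpler but not determined --- you never say what the layer weights are. The missing ingredient, which you gesture at in your final paragraph but do not establish, is this: the \emph{deepest} vertex of $C(B_\nu)$ sits at depth exactly $\nu^* := \max\{\mu \ge \nu : B_\mu \not\succ B_\nu\}$, because by the identity above $B_\mu \succ B_\nu \iff C(B_\nu)\cap I_\mu=\emptyset$, so $C(B_\nu)$ has a vertex persisting into $B_{\nu^*}$ but none into $B_{\nu^*+1}$. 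Only once this depth/$\succ$ correspondence is pinned down does the hypothesis $B_\mu \succ B_\nu \Rightarrow j_\mu>i_\nu$ (applied to $\mu=\nu^*+1,\ldots,k$) do its work: it ensures that a single layer weight $w_{\nu^*}$ can simultaneously exceed every $\delta_{i_\nu}$ with $\nu^*(\nu)=\nu^*$ and stay below every $\eps_{j_\mu}$ with $\mu>\nu^*$, and a greedy increasing choice of $w_0<\cdots<w_k$ then satisfies all three defining conditions. As written, your claim that the hypothesis ``is precisely what guarantees'' the core condition is an assertion rather than a proof, and you correctly flag this as the remaining obstacle; the lemma you need is the one above, after which the verification is routine.
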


\begin{proof}
The necessity of the condition is straightforward.
To show that it is sufficient we will construct a point $v:= \sum t_jv_j$, where the
sum is taken over all vertices $v_j$ of $K$, such that $v \in Z_K(i_0,j_0, \ldots ,i_k,j_k)$.
This will be done in three steps.

(a)\ Define $\ell_\nu$ as the last index in $C(B_\nu)$ (i.e., $v_{\ell_\nu}$ is the center of
the smallest simplex $\Delta_j$ of $R$ such that $j \in C(B_\nu)$).
Set $t_{\ell_\nu}:= \delta_{i_\nu}$.
If $\ell_\nu$ is the same index for several $\nu$, set $t_{\ell_\nu}$to be the maximum of the
corresponding $\delta_{i_\nu}$.

(b)\ Fix a sequence $\gamma_0, \ldots , \gamma_{k+1}$ such that $0< \gamma_0 < \cdots <
\gamma_{k+1} \ll \eps_0$.
For a vertex $v_j$ of $B_{\nu -1}$ which is not one of $v_{\ell_\mu}$ and not a vertex of
$B_\nu$, set $t_j:= \gamma_\nu + \max \delta_{i_\mu}$, where the maximum is taken over all
$\mu$ such that $B_\nu \succ B_\mu$ (or equals 0 if there is no such $\mu$).
For any vertex $v_j$ of $K$ that does not belong to $B_0$, set $t_j:= \gamma_0$.
For a vertex $v_j$ of $B_k$ that is not one of $v_{\ell_\nu}$ set $t_j:= \gamma_{k+1} +
\max \delta_{i_\mu}$, where the maximum is taken over $\mu = 0, \ldots ,k$.

(c)\ For the last vertex $v_\omega$ of $B_k$ set $t_\omega:= 1 - \sum_{v_j}t_j$,
where the sum is taken
over all vertices $v_j$ of $K$ other than $v_\omega$.
If $\omega= \ell_k$, this overrides the setting in (a).
If $\omega \neq \ell_k$, this overrides the setting in (b).

It is easy to check that $v \in Z_K(i_0,j_0, \ldots ,i_k,j_k)$.
\end{proof}

\begin{lemma}\label{le:contractible}
Let $B_0, \ldots , B_k$ be a flag of simplices of $\widehat S$, and
$i_0,j_0, \ldots ,i_k,j_k$ be a sequence of integers in $\{ 0, 1, \ldots ,m \}$.
For (\ref{eq:ed}),
if $B_\mu \succ B_\nu$ implies $j_\mu > i_\nu$ for any $\mu,\ \nu \in \{0, 1, \ldots ,k \}$,
then the set
$$Z(i_0,j_0, \ldots ,i_k,j_k):= \bigcup_K Z_K(i_0,j_0, \ldots ,i_k,j_k),$$
where the union is taken over all simplices $K$ of $\widehat R$ with $B_0 \subset \overline K$,
is an open contractible subset of $G$.
Otherwise $Z(i_0,j_0, \ldots ,i_k,j_k)= \emptyset$.
\end{lemma}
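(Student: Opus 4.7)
The plan is to handle the two cases separately. If the flag condition fails---i.e., there exist $\mu,\nu$ with $B_\mu \succ B_\nu$ but $j_\mu \le i_\nu$---then Lemma~\ref{le:Z_K} gives $Z_K = \emptyset$ for every $K$ with $B_0 \subset \overline K$, so $Z = \emptyset$. Assume henceforth that the flag condition holds.

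For openness in $G$: fix $p \in Z_K \subset K$, and note that all inequalities defining the sets $K_{B_\nu}(\delta_{i_\nu},\eps_{j_\nu})$ are strict. A point $p' \in G$ close to $p$ lies either in $K$ itself or in an open simplex $K'$ having $K$ as a face, so in either case $B_0 \subset \overline K \subset \overline{K'}$. The barycentric coordinates of $p'$ at the extra vertices $J' \setminus J$ are close to zero while the remaining ones are close to those of $p$; one then checks directly that each strict inequality defining $K'_{B_\nu}(\delta_{i_\nu},\eps_{j_\nu})$ is still satisfied at $p'$. Hence $p' \in Z_{K'} \subset Z$.

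For contractibility, I will deformation-retract $Z$ onto $Z_{B_0}$ (the intersection evaluated with ambient simplex $K = B_0$) by pushing barycentric mass from vertices outside $I_0$---the vertex set of $B_0$---onto the vertices inside. For $p \in Z_K$ with coordinates $(t_j)_{j \in J}$, set $\alpha := \sum_{j \in I_0} t_j > 0$ and $\beta := 1 - \alpha$, and define
$$
\Phi(p,s) := \sum_{j \in I_0} \frac{\alpha + s\beta}{\alpha}\, t_j\, v_j \;+\; \sum_{j \in J \setminus I_0} (1-s)\, t_j\, v_j,\qquad s\in[0,1].
$$
Then $\Phi(\cdot,0) = \mathrm{id}_Z$ and $\Phi(p,1)$ lies in the open simplex $B_0$. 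The crucial structural feature is that every $B_\nu$ is a subsimplex of $B_0$, so its vertex set $I_\nu$ lies inside $I_0$; this is what allows $\Phi$ to respect simultaneously all inequalities defining $K_{B_\nu}(\delta_{i_\nu},\eps_{j_\nu})$.

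The main obstacle is verifying that $\Phi(p,s) \in Z$ for every $s \in [0,1]$. For each $\nu$, the sums $\sum_{i \in C(B_\nu)} t_i$ and $\sum_{i \in I_\nu} t_i$ involve only indices in $I_0$, so they are multiplied by $(\alpha+s\beta)/\alpha \ge 1$, preserving the lower bounds $\delta_{i_\nu}$ and $1 - \eps_{j_\nu}$. For the ordering requirement $t_i > t_{j'}$ with $i \in I_\nu$ and $j' \in J \setminus I_\nu$, one splits by whether $j' \in I_0 \setminus I_\nu$ (both coordinates scale by the same factor, inequality preserved) or $j' \in J \setminus I_0$ (left scales up, right scales down, inequality strengthened). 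Hence $\Phi(p,s) \in Z_K$ for $s<1$, and an identical renormalized check at $s=1$ gives $\Phi(p,1) \in Z_{B_0}$. Finally, $Z_{B_0}$ is non-empty by Lemma~\ref{le:Z_K} and is a finite intersection of convex subsets of $B_0$ (each defined by strict linear inequalities in barycentric coordinates), so it is convex and therefore contractible, completing the proof.
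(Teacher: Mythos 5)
Your proof is correct, and the contractibility argument takes a genuinely different route from the paper's. The paper covers $Z$ by the closed sets $\overline{Z_K} \cap Z$, observes that this covering has the same nerve as the star of $B_0$ in $\widehat R$ (which is contractible) and that all nonempty intersections are convex, and then invokes the Nerve Theorem. You instead construct an explicit strong deformation retraction of $Z$ onto $Z_{B_0}$ by uniformly rescaling the barycentric mass off $J \setminus I_0$ onto $I_0$; the verification that each of the three families of defining strict inequalities is preserved under the flow (the first two because $C(B_\nu), I_\nu \subset I_0$ and the scale factor on $I_0$ is $\ge 1$, the third by the case split on $j' \in I_0 \setminus I_\nu$ versus $j' \in J\setminus I_0$) is sound, the retraction is continuous across simplex boundaries because barycentric coordinates extend continuously and $\alpha$ depends only on them, and the terminal set $Z_{B_0}$ is nonempty by Lemma~\ref{le:Z_K} and convex (a finite intersection of open half-spaces and the open simplex $B_0$), hence contractible. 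The payoff of your version is that it avoids the Nerve Theorem entirely for this lemma and is self-contained and elementary; the payoff of the paper's version is that it reuses the same nerve machinery already deployed throughout Section 4, keeping the toolkit uniform. Your openness argument (open star plus preservation of finitely many strict inequalities) is also correct and, if anything, more transparent than the paper's; the emptiness case matches the paper exactly.
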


\begin{proof}
By Lemma~\ref{le:Z_K}, $Z(i_0,j_0, \ldots ,i_k,j_k) \neq \emptyset$ if and only if
$B_\mu \succ B_\nu$ implies $j_\mu > i_\nu$ for any $\mu,\ \nu \in \{0, 1, \ldots ,k \}$.
So, suppose that $Z(i_0,j_0, \ldots ,i_k,j_k) \neq \emptyset$, and consider two simplices,
$K$ and $K'$, such that $B_0 \subset \overline {K'} \subset \overline K$.
Then the intersection of the closure of the complement in $K$ of
$Z_K(i_0,j_0, \ldots ,i_k,j_k)$ with $K'$ coincides with the complement in $K'$ of
$Z_{K'}(i_0,j_0, \ldots ,i_k,j_k)$.
Hence, $Z_{K'}(i_0,j_0, \ldots ,i_k,j_k) \cup Z_K(i_0,j_0, \ldots ,i_k,j_k)$ is open in
$\overline K$.
It follows that $Z(i_0,j_0, \ldots ,i_k,j_k)$ is open in $G$, and has a closed covering by
convex sets $\overline {Z_K(i_0,j_0, \ldots ,i_k,j_k)} \cap Z(i_0,j_0, \ldots ,i_k,j_k)$
over all simplices $K$ of $\widehat R$.
This covering has the same nerve as the star of $B_0$ in the complex $\widehat R$, this star
is contractible.
An intersection of any number of elements of the covering of $Z(i_0,j_0, \ldots ,i_k,j_k)$
is convex, and therefore contractible.
By the Nerve Theorem (Theorem~\ref{th:nerve} (ii)), both $Z(i_0,j_0, \ldots ,i_k,j_k)$
and the star are
homotopy equivalent to the geometric realization of the nerve, and hence to one another.
It follows that $Z(i_0,j_0, \ldots ,i_k,j_k)$ is contractible.
\end{proof}

\begin{lemma}\label{le:V_B}
For (\ref{eq:ed}) and for each simplex $B$ in $\widehat S$ and every $m \ge 1$ the set $V_B$
(see Definition~\ref{def:V}) is open in $G$ and $(m-1)$-connected.
\end{lemma}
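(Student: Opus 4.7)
The plan is to apply the Nerve Theorem (Theorem~\ref{th:nerve}) to the open cover
\[
V_B \;=\; \bigcup_{B' \in S_B,\ 0 \le i \le m} A(B', i), \qquad A(B', i) \;:=\; \bigcup_{K:\ B \subset \overline K} K_{B'}(\delta_i, \eps_i),
\]
where the inner union runs over open simplices $K$ of $\widehat R$. Openness of each $A(B', i)$ in $G$ is a continuity check: at a point $p \in K_{B'}(\delta_i, \eps_i) \subset K$, a neighborhood of $p$ in $G$ meets every open simplex $K''$ with $K \subset \overline{K''}$, and in the barycentric coordinates of $K''$ the coordinates of vertices of $K'' \setminus K$ vanish at $p$ while those of vertices of $K$ remain strictly positive. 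The strict inequalities of Definition~\ref{def:K_B} defining $K''_{B'}(\delta_i, \eps_i)$ therefore hold at $p$ and, by continuity, in a neighborhood of $p$ inside $K''$ (note $B \subset \overline K \subset \overline{K''}$, so this $K''$ contributes to $A(B', i)$). Hence $A(B', i)$, and therefore $V_B$, is open in $G$.

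Next I would compute intersections of cover elements. Pairwise disjointness of open simplices of $\widehat R$ gives
\[
\bigcap_{\nu=0}^k A(B'_\nu, i_\nu) \;=\; \bigcup_{K:\ B \subset \overline K}\ \bigcap_{\nu=0}^k K_{B'_\nu}(\delta_{i_\nu}, \eps_{i_\nu}).
\]
Lemma~\ref{le:K_B_intersection} collapses any repeated $B'$-factor in the inner intersection into a single $K_{B'}(\max_\nu \delta_{i_\nu}, \min_\nu \eps_{i_\nu})$, the extrema being taken over $\nu$ with $B'_\nu = B'$. After reordering the distinct $B'_\nu$'s into a flag one is in the setting of Lemmas~\ref{le:Z_K} and~\ref{le:contractible}: the intersection is non-empty iff the distinct $B'_\nu$'s form a chain in $(S_B, \succeq)$ and the associated index-blocks $I_j := \{i_\nu : B'_\nu = B'_{\nu_j}\}$ are strictly interleaved, $\min I_j > \max I_{j+1}$; in that case the intersection coincides with an open contractible $Z(\cdot)$-set. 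Theorem~\ref{th:nerve}(ii) therefore yields $V_B \simeq |\mathcal N|$, where $\mathcal N$ is the nerve of this cover.

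Finally, I would show $\mathcal N$ is $(m-1)$-connected. Its vertex set is $S_B \times \{0, \ldots, m\}$ with simplices as described; note that the order complex $\Delta(S_B)$ has $B$ as a cone point (since $B$ is the minimum of $(S_B, \succeq)$) and is therefore contractible. My plan is to fibre $\mathcal N$ over $\Delta(S_B)$ via the simplicial map $\pi:(B', i) \mapsto B'$ and apply Theorem~\ref{th:Bjorner2} (after passing to barycentric subdivisions, so that $\pi$ becomes a poset map). Over a chain $q = (B'_1 \succ \cdots \succ B'_p)$ in $S_B$ the fibre $\mathcal N_q$ is the subcomplex of $\mathcal N$ on vertices $\{(B'_j, i) : 1 \le j \le p,\ 0 \le i \le m\}$, whose simplices are the strictly-interleaved index-block tuples $(I_1, \ldots, I_p)$; I would prove by induction on $p$ that $\mathcal N_q$ is $(m-1)$-connected (and expect contractibility for $p \le m+1$). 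Combined with contractibility of the base, this propagates $(m-1)$-connectedness through Theorem~\ref{th:Bjorner2} to $\mathcal N$, and hence to $V_B$.

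The main obstacle is the fibre analysis. Although the intuition --- $m+1$ time slots provide the nerve with enough slack for connectivity through dimension $m-1$ --- is clear, the asymmetry of the simplex condition (trivial within one $B'$-row but strict across a chain) blocks any naive straight-line retraction such as $(B', i) \mapsto (B, i)$: the edge $\{(B', i), (B, i)\}$ with $B' \succ B$ does not lie in $\mathcal N$, since the interleaving $i > i$ fails. Producing the fibre-wise contraction, suitably routed through intermediate index values and respecting the chain structure, is the principal technical effort.
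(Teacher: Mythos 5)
Your first half matches the paper: you construct the same open cover $\{ A(B',i) \}$ of $V_B$ (the paper calls them $U_{B',i}$), verify openness, invoke Lemmas~\ref{le:K_B_intersection}, \ref{le:Z_K}, \ref{le:contractible} to show that every nonempty intersection of cover elements is contractible, and apply the Nerve Theorem to reduce $(m-1)$-connectedness of $V_B$ to $(m-1)$-connectedness of the nerve. All of that is correct and is exactly what the paper does.

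The gap is that you stop at the point where the real work begins. The paper's proof of Lemma~\ref{le:V_B} consists of precisely this reduction, followed by the sentence ``This follows from Proposition~\ref{prop:combinatorics} below,'' and Proposition~\ref{prop:combinatorics} \emph{is} the combinatorial content: it proves $(m-1)$-connectedness of a simplicial complex $M(m_0,\dots,m_N)$ whose simplices are the pairs $((p_0,i_0),\dots,(p_k,i_k))$ satisfying the $B'$-ordering and interleaving conditions you wrote down. You recognize that this is where the difficulty lies --- you even correctly pinpoint the obstruction to a naive retraction $(B',i)\mapsto(B,i)$ --- but you do not produce the argument. ``I would prove by induction on $p$ that $\mathcal N_q$ is $(m-1)$-connected'' is a declaration, not a proof; the lemma is not established without it.

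The route you sketch for that missing step also differs from the paper's and is less likely to close cleanly. You propose to fibre the nerve over the order complex $\Delta(S_B)$ via $(B',i)\mapsto B'$ and invoke Theorem~\ref{th:Bjorner2}, noting that $\Delta(S_B)$ is a cone and hence contractible. But to propagate $(m-1)$-connectedness through Theorem~\ref{th:Bjorner2} you would need the fibres $\pi^{-1}(\Delta(Q_{\preceq q}))$ to be $(m-1)$-connected for \emph{every} $q$, and those fibres are exactly the subcomplexes you say you cannot yet contract; contractibility of the base does not help, since Theorem~\ref{th:Bjorner2} transfers connectivity from fibres to total space, not the reverse. The paper's Proposition~\ref{prop:combinatorics} instead fibres $M(m_0,\dots,m_N)$ over the full simplex $\overline{\Delta^N}$ (forgetting the poset structure on the index set), so the image visibly contains the $m$-skeleton $\overline{\Delta^N(m)}$ and is therefore $(m-1)$-connected; the partial order $\succeq$ enters only through the interleaving condition (c), and the fibre over a point of a face $\Delta^L$ is shown contractible by a separate induction on $L$ that decomposes it as a union of joins $M(s)\ast\overline{\Delta^{m_L-s}}$ and applies the Nerve Theorem once more. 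That induction, which makes essential use of the rank function $r(\cdot)$ and the freedom to shrink the caps $m'_p=\min\{m_p, r(p)+s-r(L)\}$, is precisely the piece your proof is missing.

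Finally, a small but real imprecision: you refer to ``the distinct $B'_\nu$'s form a chain in $(S_B,\succeq)$.'' Lemma~\ref{le:B_B'}(i) forces the $B'_\nu$ to form a \emph{flag} under the closure (subsimplex) relation, while the partial order $\succeq$ of Definition~3.8 additionally requires disjoint cores; it is $\succeq$, not the flag relation, that governs the interleaving in Lemma~\ref{le:Z_K}. Conflating the two orders will cause trouble in exactly the fibre analysis you are trying to carry out.
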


\begin{proof}
For every simplex $B' \in S_B$ consider the set
$U_{B',i}:= \bigcup_K K_{B'}(\delta_i, \eps_i)$,
where the union is taken over all simplices $K$ of $\widehat R$ with $B \subset \overline K$.
Obviously, the family $\{ U_{B',i}|\> B' \in S_B,\> 0 \le i \le m \}$ is an open covering of
$V_B$.
Let $M_B$ denote the nerve of this covering.
From Lemmas~\ref{le:K_B_intersection}, \ref{le:contractible}, $M_B$ is the simplicial complex
whose $k$-simplices can be identified with all sequences of the kind
$((p_0, i_0), \ldots , (p_k, i_k))$, where $p_\nu$ are indices of the simplices
$B'_{p_\nu} \in S_B$, such that
\begin{itemize}
\item[(a)]
$B'_{p_\nu} \subset \overline{B'_{p_{\nu -1}}}$,
\item[(b)]
$0 \le i_\nu \le m$,
\item[(c)]
if $B'_{p_\mu} \succeq B'_{p_\nu}$ and $\mu > \nu$, then $i_\mu > i_\nu$.
\end{itemize}
By Lemma~\ref{le:contractible} any non-empty intersection of sets $U_{B',i}$ is contractible.
Therefore, due to the Nerve Theorem (Theorem~\ref{th:nerve} (ii)),
$V_B$ is homotopy equivalent to $M_B$, and
in order to prove that $V_B$ is $(m-1)$-connected it is sufficient
to show that $M_B$ is an $(m-1)$-connected simplicial complex.
This follows from Proposition~\ref{prop:combinatorics} below.
\end{proof}

Let $\succeq$ be a poset on $\{ 0, \ldots ,N \}$ such that if $p \succeq q$ and $p \neq q$,
then $p>q$.
For each $p \in \{0, \ldots, N \}$, let $r(p)$ be the maximal length of a poset chain with the
maximal element $p$ (i.e., the rank of the order ideal generated by $p$).
Let $m_0, \ldots ,m_N$ be nonnegative integers.
Let $M(m_0, \ldots, m_N)$ be the simplicial complex containing all $k$-simplices
$((p_0,i_0), \ldots ,(p_k,i_k))$ such that
\begin{itemize}
\item[(a)]
$p_\nu \in \{0, \ldots, N \}$, $p_0 \le \cdots \le p_k$,
\item[(b)]
$i_\nu \in \{ 0, \ldots ,m_\nu \}$,
\item[(c)]
if $p_\mu \succeq p_\nu$ and $\mu > \nu$, then $i_\mu > i_\nu$.
\end{itemize}
Let $m:= \min \{m_1, \ldots , m_N \}$.

An example of the complex $M(2,2)$ with $1 \succ 0$ is shown on Fig~\ref{fig2}.

\begin{figure}
\epsfxsize=2.5in
\centerline{\epsffile{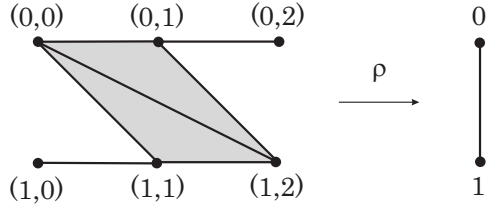}}
\caption{The complex $M(2,2)$ with $1 \succ 0$.}
\label{fig2}
\end{figure}

\begin{proposition}\label{prop:combinatorics}
The simplicial complex $M(m_0, \ldots ,m_N)$ is $(m-1)$-connected.
\end{proposition}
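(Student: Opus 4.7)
The plan is to prove Proposition~\ref{prop:combinatorics} by induction on $N$. The base case $N=0$ is immediate: $M(m_0)$ is a single $m_0$-simplex, hence contractible and therefore $(m-1)$-connected for every $m$. For the inductive step, take $p^*=N$; by compatibility of the two orders on $\{0,\ldots,N\}$, the integer maximum is automatically $\succeq$-maximal. Decompose $M=M'\cup A$, where $M'$ is the induced subcomplex on vertices with first coordinate strictly less than $p^*$, and $A$ is the closed star in $M$ of the simplex $\Delta_{p^*}=\{(p^*,0),\ldots,(p^*,m_{p^*})\}$. The subcomplex $M'$ matches the proposition on the smaller poset $\{0,\ldots,N-1\}$, so by the inductive hypothesis it is $(m'-1)$-connected with $m'=\min\{m_1,\ldots,m_{N-1}\}\ge m$, while $A$, being a simplicial join of a simplex with its link, is contractible.

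The key step is to identify $A\cap M'$ as another instance of the proposition. A simplex $\sigma\in M'$ lies in $A$ exactly when $\sigma\cup\{(p^*,i)\}\in M$ for some $i\le m_{p^*}$; unpacking condition~(c) translates this into the requirement that no vertex $(q,j)\in\sigma$ with $p^*\succeq q$ has $j\ge m_{p^*}$. Hence $A\cap M'$ is exactly $M(\tilde m_0,\ldots,\tilde m_{N-1})$ with $\tilde m_q=\min(m_q,m_{p^*}-1)$ when $p^*\succeq q$ and $\tilde m_q=m_q$ otherwise. A short check using $m\le m_{p^*}$ and $m\le m_q$ for every $q\ge 1$ yields $\tilde m:=\min\{\tilde m_1,\ldots,\tilde m_{N-1}\}\ge m-1$, so the inductive hypothesis makes $A\cap M'$ $(m-2)$-connected.

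The three pieces are assembled via the Mayer-Vietoris sequence and the Seifert-van Kampen theorem applied to $M=M'\cup A$ (thickened to an open cover as usual). Mayer-Vietoris, together with the vanishing of $\tilde H_*(A)$ and with $\tilde H_j(M')=\tilde H_j(A\cap M')=0$ in the appropriate ranges, yields $\tilde H_j(M)=0$ for $j\le m-1$. For $m\ge 2$ the Seifert-van Kampen theorem gives simple connectivity of $M$ (since $A$, $M'$ and $A\cap M'$ are then all simply connected and non-empty), and the Hurewicz theorem upgrades this to $(m-1)$-connectedness. The small cases $m=0$ (non-emptiness) and $m=1$ (connectivity of $M$, using non-emptiness of $A\cap M'$) are handled by direct inspection.

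I expect the main obstacle to be the identification of $A\cap M'$ as a smaller instance of the same proposition with the carefully chosen reduced bounds $\tilde m_q$, together with the verification that $\tilde m\ge m-1$: this is the step that makes the induction close and produces precisely the sharp connectivity bound coming from the minimum over $q\ge 1$.
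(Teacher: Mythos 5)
Your induction on $N$ via $M=M'\cup A$ is a genuinely different route from the paper's. The paper considers the forgetful projection $\rho\colon M(m_0,\ldots,m_N)\to\overline{\Delta^N}$, $(p,i)\mapsto p$, shows that the image contains the $m$-skeleton $\overline{\Delta^N(m)}$ (hence is $(m-1)$-connected), and proves by induction on the face dimension that all fibres of $\rho$ are contractible, so that the Vietoris--Begle theorem (Corollary~\ref{cor:VB}) transfers the connectivity to $M$. Your version trades the fibre computation for a Mayer--Vietoris and Seifert--van~Kampen gluing together with an explicit identification of $A\cap M'$ as a smaller instance with clipped parameters $\tilde m_q$; this makes the role of $m=\min\{m_1,\ldots,m_N\}$ very transparent and is arguably more elementary. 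The paper's approach, in turn, yields Corollary~\ref{cor:combinatorics} (contractibility when every $m_j\ge r(j)$) for free, since under that hypothesis $\rho$ is surjective onto $\overline{\Delta^N}$.

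There is a gap in your justification that $A$ is contractible. You cannot take $A$ to be the closed star, in the usual ``cofaces of $\Delta_{p^*}$'' sense, of the full simplex $\Delta_{p^*}=\{(p^*,0),\ldots,(p^*,m_{p^*})\}$: in the example $M(2,2)$ with $1\succ 0$, the edge $\{(0,1),(1,2)\}$ lies in $M$ and not in $M'$, yet $\{(0,1),(1,0),(1,1),(1,2)\}$ violates condition~(c), so this edge is not a face of any simplex containing $\Delta_{p^*}$; with that reading of $A$ the decomposition $M=M'\cup A$ already fails. The set you actually need, and implicitly use in the $A\cap M'$ computation, is the union of the closed stars of the individual vertices $(p^*,0),\ldots,(p^*,m_{p^*})$. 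That union is not of the form $\Delta_{p^*}$ joined with its link, so the stated justification does not apply; contractibility nevertheless holds, for a different reason: this $A$ coincides with the closed star of the single vertex $(p^*,m_{p^*})$ and hence is a cone. Indeed, if $\sigma$ together with $(p^*,j)$ spans a simplex of $M$, then by condition~(c) every vertex $(q,i)$ of $\sigma$ with $p^*\succeq q$ satisfies $i<j\le m_{p^*}$, so $\sigma\cup\{(p^*,m_{p^*})\}$ is again a simplex of $M$. With this correction the rest is sound: the inductive hypothesis applies to $M'$ and to $A\cap M'=M(\tilde m_0,\ldots,\tilde m_{N-1})$, the estimate $\tilde m\ge m-1$ holds as you compute, and the Mayer--Vietoris, van~Kampen, and Hurewicz assembly goes through.
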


\begin{proof}
Let $\Delta^N$ be the $N$-simplex, and $\overline{\Delta^N(m)}$ be the $m$-dimensional
skeleton of its closure.
There is a natural simplicial map
$$\begin{matrix}
\rho:\> M(m_0, \ldots ,m_N) \to \overline{\Delta^N},\cr
(p,i) \mapsto p.
\end{matrix}$$
It is easy to see that $\overline{\Delta^N(m)} \subset \rho (M(m_0, \ldots ,m_N))$, hence
$\rho (M(m_0, \ldots ,m_N))$ is $(m-1)$-connected.

Consider any face $\Delta^L$ of $\Delta^N$, $L \le N$, which has nonempty pre-image under $\rho$.
Without loss of generality assume that its vertices are $0, \ldots ,L$.
Let $M(m_0, \ldots ,m_L)$ be the simplicial complex defined over the poset on
$\{0, \ldots ,L \}$ induced by $\succeq$.
We prove inductively on $L$ that for any point $\x \in \Delta^L$ the fibre $\rho^{-1}(\x)$ is
contractible.
The Proposition then follows from Vietoris-Begle Theorem (Corollary~\ref{cor:VB} (ii)).
The base of induction, for $L=0$, is obvious.
Assume that the statement is true for $L-1$.
For any simplex $K=((p_0,i_0), \ldots , (p_k,i_k))$ of $M(m_0, \ldots, m_L)$ that projects
surjectively onto $\Delta^L$, if $p_\nu=L$ then $i_\nu \ge r(L)$.
Let $s=i_{\ell}$ be the minimal of these $i_\nu$ in $K$, so that $p_\nu <L$ for $\nu < \ell$,
while $p_\ell=L$.
Then $((p_0,i_0), \ldots ,(p_{\ell -1},i_{\ell -1}))$ is a simplex of the simplicial complex
$M(s):=M(m'_0, \ldots ,m'_{L-1})$, defined over the poset on $\{0, \ldots ,L-1 \}$ induced by
$\succeq$, where $m'_p:= \min \{m_p, r(p)+s -r(L) \}$ if $L \succeq p$, and $m'_p:=m_p$ if $L$
is incomparable with $p$.
It follows that $K$ is a simplex of the join of $M(s)$ and $\overline{\Delta^{m_L-s}}$,
where $\Delta^{m_L-s}$ is the simplex with vertices $s, \ldots ,m_L$.
Since the complex $M(s)$ is contractible due to the induction hypothesis, its join with
$\overline{\Delta^{m_L-s}}$ has a contractible fibre over any $\x \in \Delta^L$.
The fibre over $\x$ of $M(m_0, \ldots, m_L)$ is the union of these contractible fibres for
$s=r(N), \ldots ,m_L$.
The intersection of any number of these fibres is nonempty and contractible, being a fibre of
the join of $M_{\min}$ and $\overline{\Delta^{m_L-s_{\max}}}$.
Due to the Nerve Theorem (Theorem~\ref{th:nerve} (ii)), their union is homotopy equivalent to
its nerve, a simplex, and thus is contractible.
\end{proof}

\begin{corollary}\label{cor:combinatorics}
In the definition of the simplicial complex $M(m_0, \ldots, m_N)$ assume additionally
that $m_j \ge r(j)$ for every $j=0, \ldots ,N$.
Then  $M(m_0, \ldots ,m_N)$ is contractible.
\end{corollary}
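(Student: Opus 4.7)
The plan is to upgrade the simplicial projection $\rho: M(m_0, \ldots, m_N) \to \overline{\Delta^N}$, $(p,i) \mapsto p$, from the proof of Proposition~\ref{prop:combinatorics} to a homotopy equivalence onto the whole closed simplex. Since $\overline{\Delta^N}$ is contractible, Corollary~\ref{cor:VB} (ii) will then give the result. In the proof of Proposition~\ref{prop:combinatorics} the image of $\rho$ was only guaranteed to contain the $m$-skeleton $\overline{\Delta^N(m)}$; the extra hypothesis $m_j \ge r(j)$ should make $\rho$ surjective and keep every fibre non-empty and contractible.

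First I would check surjectivity: for a face with ordered vertices $q_0 < \cdots < q_L$, the tuple $((q_0, r(q_0)), \ldots, (q_L, r(q_L)))$ is a simplex of $M(m_0, \ldots, m_N)$ projecting onto it. Conditions (a) and (b) from the definition are immediate (the latter uses $r(q_\nu) \le m_{q_\nu}$), and for condition (c), whenever $q_\mu \succeq q_\nu$ with $\mu > \nu$, any chain realising $r(q_\nu)$ extends by $q_\mu$ to give $r(q_\mu) > r(q_\nu)$, matching $i_{q_\mu} > i_{q_\nu}$.

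Next I would invoke verbatim the inductive analysis from the proof of Proposition~\ref{prop:combinatorics}: the fibre $\rho^{-1}(\x)$ over $\x \in \Delta^L$ decomposes as a union of joins $M(s) \ast \overline{\Delta^{m_L - s}}$ for $s \in \{r(L), \ldots, m_L\}$, whose multiple intersections are again such joins; the Nerve Theorem then reduces contractibility of the fibre to contractibility of each $M(s)$, supplied by the induction. Combining this with surjectivity, Corollary~\ref{cor:VB} (ii) produces the homotopy equivalence $M(m_0, \ldots, m_N) \simeq \overline{\Delta^N}$, and hence contractibility. The main bookkeeping obstacle is verifying that the auxiliary parameters $m'_p = \min\{m_p,\, r(p) + s - r(L)\}$ occurring inside the induction still dominate the ranks $r'(p)$ in the restricted poset on $\{0, \ldots, L-1\}$; this should follow painlessly from $s \ge r(L)$ together with $r'(p) \le r(p) \le m_p$, so that the induction hypothesis of the corollary continues to apply at each step of the recursion.
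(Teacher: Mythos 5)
Your proposal is correct and follows essentially the same route as the paper's (very terse) one-line proof: the extra hypothesis $m_j \ge r(j)$ makes $\rho$ surjective onto $\overline{\Delta^N}$, and combining this with the fibre-contractibility already established in the proof of Proposition~\ref{prop:combinatorics}, Corollary~\ref{cor:VB}(ii) gives $M(m_0,\ldots,m_N) \simeq \overline{\Delta^N}$. Your explicit verification of surjectivity via the simplex $((q_0,r(q_0)),\ldots,(q_L,r(q_L)))$ and the bookkeeping check $m'_p \ge r'(p)$ simply spell out what the paper leaves implicit, and both check out.
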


\begin{proof}
The condition $m_j \ge r(j)$ guarantees that the map $\rho$ is surjective,
hence $\rho (M(m_0, \ldots ,m_N))$ is contractible.
\end{proof}

\begin{theorem}\label{th:VS}
For (\ref{eq:ed})
there are homomorphisms $\chi_k:\> H_k(V) \to H_k(S)$ and $\tau_k:\> \pi_k(V) \to \pi_k(S)$
such that $\chi_k$, $\tau_k$ are isomorphisms
for every $k \le m-1$, and $\chi_m$, $\tau_m$ are epimorphisms.
Moreover, if $m \ge \dim (S)$, then $V \simeq S$.
\end{theorem}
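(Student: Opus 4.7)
The plan is to apply the Nerve Theorem (Theorem~\ref{th:nerve}) to the open cover $\{V_B\}_{B\in\widehat S}$ of $V$, identify the resulting nerve with a simplicial model of $S$ (via Remark~\ref{re:simp_nerve}), and obtain $\chi_k,\tau_k$ by composition. By Lemma~\ref{le:V_B} each $V_B$ is open in $G$ and $(m-1)$-connected, so the single-set hypothesis of the Nerve Theorem with $k=m-1$ holds at once.

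The main work is to verify the same connectivity for nonempty multiple intersections $V_{B_0}\cap\cdots\cap V_{B_t}$. Following the scheme of Lemmas~\ref{le:K_B_intersection}--\ref{le:contractible}, such an intersection is again open in $G$ and inherits an induced open cover by sets $U_{B',i}$ with $B'$ ranging over the subposet $S_{B_0}\cap\cdots\cap S_{B_t}$ (under the same compatibility constraints). Re-running the proof of Lemma~\ref{le:V_B} with this smaller poset, Proposition~\ref{prop:combinatorics} applies to the nerve of the restricted cover and forces each nonempty intersection to be $(m-1)$-connected. Since $m-1\ge(m-1)-t+1$ for $t\ge1$, Theorem~\ref{th:nerve}(i) yields a map $f\colon V\to|\mathcal N|$ inducing isomorphisms on $\pi_j$ for $j\le m-1$ and an epimorphism for $j=m$; the homological version then follows from Theorem~\ref{th:Whitehead2}.

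It remains to identify $|\mathcal N|$ with $S$. The intersection analysis shows that the nerve $\mathcal N$ is exactly the flag nerve of simplices of $\widehat S$ described in Remark~\ref{re:simp_nerve}, and that remark together with $S=|\widehat S|$ supplies a homotopy equivalence $g\colon S\to|\mathcal N|$. Setting $\tau_k=g_{\# k}^{-1}\circ f_{\# k}$ and $\chi_k=g_{\ast k}^{-1}\circ f_{\ast k}$ gives the desired maps with the stated isomorphism/epimorphism properties. For the \emph{moreover} clause: when $m\ge\dim S$, the rank $r(\widehat S)\le\dim S\le m$, so Corollary~\ref{cor:combinatorics} upgrades the $(m-1)$-connectivity of each nonempty intersection to contractibility, and Theorem~\ref{th:nerve}(ii) then yields $V\simeq|\mathcal N|\simeq S$.

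The hard part, as expected, lies in the intersection analysis of the second paragraph: precisely identifying the subposet governing $V_{B_0}\cap\cdots\cap V_{B_t}$ (so the proof of Lemma~\ref{le:V_B} transports essentially verbatim) and confirming that the nerve $\mathcal N$ matches Remark~\ref{re:simp_nerve}'s flag complex for $\widehat S$ rather than some larger complex that merely collapses to it. Once this combinatorial bookkeeping is in place, the rest is a formal application of the nerve machinery of Section~2.
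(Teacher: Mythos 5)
Your high-level plan (Nerve Theorem on the open cover $\{V_B\}$ of $V$, identify the resulting nerve with a simplicial model of $S$, compose) is the paper's plan, and the treatment of the single-set hypothesis via Lemma~\ref{le:V_B} and the use of Corollary~\ref{cor:combinatorics} for the ``moreover'' clause are as in the paper. However, the step you yourself flag as the hard part contains a genuine error: the nerve $\mathcal N_V$ of $\{V_B\}$ is \emph{not} the flag nerve of $\widehat S$ from Remark~\ref{re:simp_nerve}. Take two $1$-simplices $B_1,B_2$ of $\widehat S$ sharing a common vertex $B_0\in\widehat S$ but with neither a face of the other. Then $B_1,B_2$ do not form a flag, so $\{B_1,B_2\}$ is not a simplex of the flag nerve; yet $S_{B_0}\subset S_{B_1}\cap S_{B_2}$ shows $V_{B_0}\subset V_{B_1}\cap V_{B_2}\neq\emptyset$, so $\{B_1,B_2\}$ \emph{is} a simplex of $\mathcal N_V$. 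So $\mathcal N_V$ is strictly larger than the flag nerve, and the map $g$ you propose to take from Remark~\ref{re:simp_nerve} targets the wrong complex; the composition $g_{\#}^{-1}\circ f_{\#}$ is not defined.

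The paper avoids this by using a different simplicial model of $S$, namely the nerve $\mathcal N_{\widehat S}$ of the cover of $\widehat S$ by \emph{closures} of its simplices (not the flag nerve). The key observation, via Lemma~\ref{le:B_B'}(i), is that $V_{B_1}\cap\cdots\cap V_{B_t}$ is either empty or equal to $V_{B_*}$ with $\overline{B_*}=\overline{B_1}\cap\cdots\cap\overline{B_t}$; this simultaneously gives the $(m-1)$-connectivity of all nonempty intersections directly from Lemma~\ref{le:V_B} (no need to ``re-run'' its proof on a smaller poset) and produces the canonical simplicial isomorphism $\xi\colon|\mathcal N_V|\to|\mathcal N_{\widehat S}|$. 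One then applies the Nerve Theorem once to $\{V_B\}$ (part (i), giving $\psi_V$) and once to the closed-simplex cover of $\widehat S$ (part (ii), giving a homotopy equivalence $\psi_{\widehat S}$), and composes through $\xi$. If you replace your flag-nerve identification with this closed-simplex nerve and make the intersection observation explicit, the rest of your argument goes through.
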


\begin{proof}
Due to Lemma~\ref{le:B_B'}(i), for any three simplices $B_0$, $B_1$, $B_2$ in $\widehat S$,
the equality
$\overline B_0= \overline B_1 \cap \overline B_2$ is equivalent to
$V_{B_0}=V_{B_1} \cap V_{B_2}$.
Hence, a nonempty intersection of any number of sets $V_B$ is a set of the same type, and
therefore is $(m-1)$-connected.
Moreover, there is an isomorphism $\xi:\> |{\mathcal N}_V| \to |{\mathcal N}_{\widehat S}|$
between the geometric realization of the nerve ${\mathcal N}_{\widehat S}$ of the covering
of the simplicial complex $\widehat S$ by closures of its simplices and the geometric
realization of the nerve ${\mathcal N}_V$ of the open covering of $V$ by sets $V_B$.

Since intersections of any number of elements of the covering of $\widehat S$ (i.e., simplices)
are contractible, the Nerve Theorem (Theorem~\ref{th:nerve} (ii)) implies that
$\widehat S \simeq |{\mathcal N}_{\widehat S}|$, i.e., there is a continuous map
$\psi_{\widehat S}:\> \widehat S \to |{\mathcal N}_{\widehat S}|$ which induces isomorphisms
of homotopy groups
$\psi_{\widehat S \#}:\>  \pi_k(\widehat S) \to \pi_k(|{\mathcal N}_{\widehat S}|)$ for all
integers $k \ge 0$.

On the other hand, by the Nerve Theorem (Theorem~\ref{th:nerve} (i)), there is a continuous map
$\psi_V:\> V \to |{\mathcal N}_V|$ inducing isomorphisms of homotopy groups
$\psi_{V \#}:\> \pi_k(V) \to \pi_k(|{\mathcal N}_V|)$ for every $k \le m-1$ and an
epimorphism $\psi_{V \#}:\> \pi_m(V) \to \pi_m(|{\mathcal N}_V|)$.
As $\tau_k$ take
$$
\psi^{-1}_{\widehat S \#} \circ \xi \circ \psi_{V \#}:\> \pi_k(V) \to \pi_k(\widehat S).
$$

By Whitehead Theorem (Theorem~\ref{th:Whitehead2}), $\psi_{\widehat S}$ induces isomorphisms
of homology groups
$\psi_{\widehat S \ast}:\>  H_k(\widehat S) \to H_k(|{\mathcal N}_{\widehat S}|)$ for all
$k \ge 0$, while $\psi_V$ induces isomorphisms of homology
groups $\psi_{V \ast}:\> H_k(V) \to H_k(|{\mathcal N}_V|)$ for every $k \le m-1$, and
an epimorphism $\psi_{V \ast}:\> H_m(V) \to H_m(|{\mathcal N}_V|)$.
As $\chi_k$ take
$$
\psi^{-1}_{\widehat S \ast} \circ \xi \circ \psi_{V \ast}:\> H_k(V) \to H_k(\widehat S).
$$

If $m \ge \dim (S)$ then, by Corollary~\ref{cor:combinatorics},
a nonempty intersection of any number of sets $V_B$ is contractible.
Then, according to Nerve Theorem, sets $V$ and $\widehat S$ are homotopy equivalent to
geometric realizations of the respective nerves, and therefore $V \simeq S$.
\end{proof}

\section{Proof of Theorem~\ref{th:main}}

We now need to re-define the simplicial complex $R$ so that it would satisfy additional
properties.
Recall that definable functions are triangulable \cite{Coste}, Th.~4.5.
Consider a finite simplicial complex $R'$ such that $R'$ is a triangulation of the projection
$$\rho:\> G \times [0,1] \to [0,1],$$
and $R'$ is compatible with
$$\bigcup_{\delta \in (0,1)} (S_\delta, \delta) \subset G \times [0,1].$$
Define $R$ as the triangulation induced by $R'$ on the fibre $\rho^{-1}(0)$.

\begin{definition}
Along with the sequence $\eps_0, \ldots , \delta_m$,
consider another sequence
$\eps'_0, \delta'_0, \eps'_1, \delta'_1 \ldots \eps'_m, \delta'_m $.
Let $T'$ be the set defined as in Definition~\ref{def:telescope} replacing all
$\delta_i,\ \eps_i$ by $\delta'_i,\ \eps'_i$.
Let $V'$ be the set defined as in Definition~\ref{def:V} replacing all
$\delta_i,\ \eps_i$ by $\delta'_i,\ \eps'_i$.
\end{definition}

\subsection{Definable case}

In the definable case we specify the hard--soft relation for the set $V'$ as follows.
For any pair $(\Delta_1, \Delta_2)$ of $S$
such that $\Delta_1$ is a subsimplex of $\Delta_2$, we assume that $\Delta_1$ is {\em soft}
in $\Delta_2$.

\begin{definition}\label{def:V''}
Let $B$ and $K$ be as in Lemma~\ref{le:core}.
For $0 < \eps < 1$ define
$$K_B(\eps):=
\Bigg\{ \sum_{j_\nu \in J} t_{j_\nu}v_{j_\nu}
\in K(j_0, \ldots ,j_\ell) \Bigg| \sum_{i_\nu \in I}t_{i_\nu}>1 - \eps,$$
$$\forall i_\nu \in I\> \forall j_\mu \in (J \setminus I)\ (t_{i_\nu} > t_{j_\mu}) \Bigg\}.$$
\end{definition}

Introduce a new parameter $\eps''$, and define $V''$ as the union of $K_B(\eps'')$ over
all simplices $B$ of $\widehat S$, and all simplices $K$ of $\widehat R$ such that
$B \subset \overline K$.

In each of the following Lemmas~\ref{le:V''}, \ref{le:VsubT}, \ref{le:V'eqV''}, and \ref{le:TV1},
the statement holds for
$$0 < \eps'_0 \ll \cdots \ll \eps'_i \ll \eps_i \ll \delta_i \ll \delta'_i \ll \cdots
\ll \delta'_m \ll \eps''\quad (i=0, \ldots ,m).$$

\begin{lemma}\label{le:V''}
$S \simeq V''$.
\end{lemma}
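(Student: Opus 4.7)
The plan is to apply the Nerve Theorem twice to realize both $V''$ and $S$ as the geometric realization of one and the same abstract simplicial complex.

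For each simplex $B\in\widehat S$ set
$$U_B := \bigcup_{K:\ B\subset \overline K} K_B(\eps''),$$
the union being over simplices $K$ of $\widehat R$ that contain $B$ as a face. Then $\{U_B\}_{B\in\widehat S}$ is an open cover of $V''$ in $G$. I would first verify the structural properties needed to run the Nerve Theorem. Since all subsimplex relations in $S$ are soft and only the single parameter $\eps''$ appears, the defining inequalities of $K_B(\eps'')$ in Definition~\ref{def:V''} are linear, so each $K_B(\eps'')$ is convex. A verbatim analogue of Lemma~\ref{le:B_B'} shows that $K_B(\eps'')\cap K_{B'}(\eps'')$, taken in a common simplex $K$, is convex and nonempty iff either $B\subset\overline{B'}$ or $B'\subset\overline B$. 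Running the Nerve Theorem on the cover $\{K_B(\eps'')\}_K$ of $U_B$, exactly as in the proof of Lemma~\ref{le:V_B} — the nerve of this cover is the star of $B$ in $\widehat R$, which is contractible, and all finite nonempty intersections are convex — shows that each $U_B$ itself is contractible.

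Next I would show that $U_{B_0}\cap\cdots\cap U_{B_k}\neq\emptyset$ if and only if, after a suitable reordering, $B_0\subset\overline{B_1}\subset\cdots\subset\overline{B_k}$ is a flag in $\widehat S$, and that in this case the intersection is open in $G$ and contractible. With such an ordering, the intersection equals the union, over simplices $K\in\widehat R$ with $B_k\subset\overline K$, of the convex sets $K_{B_0}(\eps'')\cap\cdots\cap K_{B_k}(\eps'')$. A nerve argument strictly parallel to Lemma~\ref{le:contractible} — now with the (contractible) star of $B_k$ playing the role that the star of $B_0$ plays there — yields contractibility. Theorem~\ref{th:nerve}(ii) then gives $V''\simeq|{\mathcal N}_{V''}|$, where ${\mathcal N}_{V''}$ is the abstract simplicial complex whose $k$-simplices are precisely the chains $B_0\subset\overline{B_1}\subset\cdots\subset\overline{B_k}$ of simplices of $\widehat S$ under inclusion.

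Finally I would identify $|{\mathcal N}_{V''}|$ with $S$, following the template of Theorem~\ref{th:VS}. The complex ${\mathcal N}_{V''}$ is the order complex of the simplex poset of $\widehat S$, hence naturally isomorphic (via an isomorphism $\xi$ sending each vertex of the nerve to the barycenter of the corresponding simplex) to the nerve of the covering of the triangulated space $S=\bigcup_{B\in\widehat S}B$ by the relatively closed sets $\overline B\cap S$. Any finite nonempty intersection of sets in this latter cover is again of the form $\overline{B'}\cap S$, hence contractible, so Theorem~\ref{th:nerve}(ii) yields $S\simeq|{\mathcal N}_{V''}|$. Composing the two homotopy equivalences gives $V''\simeq S$.

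The main technical obstacle is the contractibility of the multi-index intersections $U_{B_0}\cap\cdots\cap U_{B_k}$: even though the soft-only setting eliminates the $\delta$-parameters and collapses Lemma~\ref{le:K_B_intersection} to a triviality, one still has to glue convex pieces sitting in distinct ambient simplices $K$ into a single contractible set, which requires the nested Nerve-Theorem argument of Lemma~\ref{le:contractible}. Once that is in place, the double application of Theorem~\ref{th:nerve}(ii) and the matching of nerves yield the lemma directly.
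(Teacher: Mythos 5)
Your argument that $V''\simeq|{\mathcal N}_{V''}|$ is essentially correct, and the identification of the simplices of ${\mathcal N}_{V''}$ with flags in $\widehat S$ is exactly right, though the paper takes a shortcut here: instead of running nested Nerve Theorem arguments to show $U_B$ and the multi-intersections $U_{B_0}\cap\cdots\cap U_{B_k}$ are contractible, it simply observes that $B$ (respectively the smallest simplex of the flag) is a deformation retract of $U_B$ (respectively of the intersection).

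The final step, however, contains a genuine gap. You cover $S$ by the relatively closed sets $\overline B\cap S$, $B\in\widehat S$, and claim (a) that the resulting (standard) nerve is isomorphic to ${\mathcal N}_{V''}$ and (b) that every nonempty finite intersection is contractible. Both claims fail. For (a): if $B_1$ and $B_2$ are two top-dimensional simplices of $\widehat S$ sharing a common face $B_0\in\widehat S$, then $\overline{B_1}\cap\overline{B_2}\cap S\supset B_0\neq\emptyset$, so $\{B_1,B_2\}$ is a $1$-simplex of your nerve, yet $B_1$ and $B_2$ are incomparable under inclusion and hence do not form a flag, so $\{B_1,B_2\}$ is \emph{not} a simplex of ${\mathcal N}_{V''}$; the two complexes are not isomorphic. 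For (b): since $S$ need not be closed, $\overline B\cap S$ need not be contractible, nor even connected. For instance, if $B$ is an open $2$-simplex of $\widehat S$ and $S$ contains $B$ and two of the three vertices of $\overline B$ but none of its open edges, then $\overline B\cap S$ is the disjoint union of the open triangle and two isolated points.

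The paper avoids both difficulties by covering $S$ by the \emph{open} simplices of $\widehat S$ (each trivially contractible) and using the flag version of the nerve introduced in Remark~\ref{re:simp_nerve}: a finite family of open simplices is a simplex of the nerve iff, after a suitable reordering, it forms a flag. This nerve is isomorphic to ${\mathcal N}_{V''}$ by construction, and the Nerve Theorem in the form adapted by Remark~\ref{re:simp_nerve} then yields $S\simeq|{\mathcal N}_{V''}|$ directly. You should replace your closed cover of $S$ and the claimed isomorphism with this flag-nerve argument.
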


\begin{proof}
Let $B$ be a simplex of $\widehat S$, and let $U_B := \bigcup K_B(\eps'')$, where the union
is taken over all simplices $K$ of $\widehat R$ such that $B \subset \overline K$.
Then the family of all sets $U_B$ forms an open covering of $V''$ whose nerve we denote
by ${\mathcal N}_{V''}$.
Each $U_B$ is contractible, since $B$ is a deformation retract of $U_B$.
Any intersection $U:=U_{B_0} \cap \cdots \cap U_{B_k}$ is nonempty iff, after the suitable
reordering, the sequence $B_{i_1}, \ldots , B_{i_k}$ is a $k$-flag of simplices.
If $U \neq \emptyset$, then $B_{i_k}$ is its deformation retract, hence $U$ is contractible.
By Nerve Theorem (Theorem~\ref{th:nerve} (ii)), $V'' \simeq |{\mathcal N}_{V''}|$.
On the other hand, the simplices of $\widehat S$ form a covering of $S$
with nerve ${\mathcal N}_S$ (in the sense of Remark~\ref{re:simp_nerve}),
therefore, $S \simeq |{\mathcal N}_S|$ by Nerve Theorem.
Then $S \simeq V''$, since nerves ${\mathcal N}_{V''}$ and ${\mathcal N}_S$ are isomorphic.
\end{proof}

\begin{lemma}\label{le:VsubT}
$V' \subset T \subset V''$.
\end{lemma}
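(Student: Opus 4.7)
The plan is to establish the two inclusions separately, using the compatibility of the triangulation $R$ with the family $\{S_\delta\}$ (encoded through the auxiliary triangulation $R'$ of $G \times [0,1]$) together with the defining properties of $\{S_\delta\}$ and $\{S_{\delta,\eps}\}$. Throughout, I use that in the current (definable) regime every subsimplex has been declared soft, so for any simplex $B'$ of $\widehat S$ coming from a flag $\Delta_{j_0} \supset \cdots \supset \Delta_{j_k}$ with $\Delta_{j_0} \subset S$, the core reduces to $C(B') = \{j_0\}$.

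For $V' \subset T$, I would take $x \in K_{B'}(\delta'_i,\eps'_i)$, where $K = K(j_0,\ldots,j_\ell)$ is a simplex of $\widehat R$ with $B' \subset \overline K$. The defining inequalities at $x$ reduce to $t_{j_0} > \delta'_i$, $\sum_{i_\nu \in I} t_{i_\nu} > 1 - \eps'_i$, and $t_{i_\nu} > t_{j_\mu}$ for $i_\nu \in I$, $j_\mu \in J\setminus I$. Because $R'$ is compatible with $\bigcup_\delta (S_\delta,\delta) \subset G \times [0,1]$, the function $x \mapsto \sup\{\delta : x \in S_\delta\}$ is controlled simplicially on each simplex of $R$ inside $S$; on the closed wedge $\{t_{j_0} \ge \delta'_i\}$ inside $K$, which is a compact subset bounded away from the locus where that supremum degenerates to zero, the function is bounded below by some positive $\eta(\delta'_i)$. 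The choice $\delta_i \ll \delta'_i$ then forces $\delta_i < \eta(\delta'_i)$, placing the wedge inside $S_{\delta_i}$. Property~(iii) of Definition~\ref{def:S_delta}, combined with $\eps'_i \ll \eps_i$, then yields $x \in S_{\delta_i,\eps_i} \subset T$.

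For $T \subset V''$, I would take $x \in S_{\delta_i,\eps_i}$ and locate it in the unique open simplex $K(j_0,\ldots,j_\ell)$ of $\widehat R$ whose interior contains it, writing $x = \sum t_{j_\nu} v_{j_\nu}$. After an arbitrarily small perturbation, which does not leave $S_{\delta_i,\eps_i}$ since $\eps''$ has been chosen much larger than every $\delta'_j$, one may assume all coordinates are distinct, so $I := \{i_\nu : t_{i_\nu} > \max_{j_\mu \in J\setminus I} t_{j_\mu}\}$ is well-defined, with associated simplex $B$ of $\widehat R$ spanned by $\{v_{i_\nu}\}_{i_\nu \in I}$. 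The simplex $B$ lies in $\widehat S$, since the compatibility of $R'$ with $\bigcup_\delta(S_\delta,\delta)$ forces the dominant sub-flag of $K$ at a point of $S_{\delta_i,\eps_i}$ to sit inside $S$. The inequality $\sum_{i_\nu \in I} t_{i_\nu} > 1 - \eps''$ is automatic from $\eps'' \gg \delta'_m$, so $x \in K_B(\eps'') \subset V''$.

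The principal obstacle is the $V' \subset T$ direction, specifically the transfer from the simplicial ``depth'' $t_{j_0} > \delta'_i$ to membership in the analytically defined $S_{\delta_i}$. What makes this transfer possible is the compatibility of $R'$ with $\bigcup_\delta(S_\delta,\delta)$, which ensures that on any simplex of $R$ contained in $S$ the ``$\delta$-depth'' function is simplicially controlled and hence uniformly bounded below on any compact wedge bounded away from the stratum where it degenerates. Once this has been done, the passage from $S_{\delta_i}$ to $S_{\delta_i,\eps_i}$, and symmetrically from $S_{\delta_i,\eps_i}$ to the corresponding $K_B(\eps'')$, is routine, using property~(iii) of Definition~\ref{def:S_delta} together with the cascading gaps $\eps'_i \ll \eps_i \ll \delta_i \ll \delta'_i$ and $\delta'_m \ll \eps''$.
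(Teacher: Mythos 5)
The overall strategy (pass from the simplicial picture to the analytic one via the triangulation of $\bigcup_\delta(S_\delta,\delta)$ and the cascading gaps) is the right one, and it is the same as the paper's. However, both halves of your argument contain genuine gaps which the paper avoids by systematically taking the limit of a definable curve as the smallest parameter goes to zero, so that the argument is carried out at a point that actually lies in $S$.

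For $V'\subset T$: you assert that the ``closed wedge $\{t_{j_0}\ge\delta'_i\}$ inside $K$'' lies in $S_{\delta_i}$. This is false as stated. The point $x\in K_{B'}(\delta'_i,\eps'_i)$ lives in the open simplex $K$ of $\widehat R$, which need not be a simplex of $S$; only the closed face $\overline{B'}$ of $K$, hence only the part of the wedge inside $\overline{B'}$, is a priori contained in $S$. Consequently the depth function $\sup\{\delta: x\in S_\delta\}$ is not positive on your wedge, and the bound $\eta(\delta'_i)>0$ does not hold there. The paper instead fixes $\delta'$, parametrizes $\x_{\eps'}$ as a definable curve, and passes to $\x_0=\lim_{\eps'\searrow 0}\x_{\eps'}\in\overline{B'(\delta')}$; because all subsimplices are soft, $C(B')$ is a single index and $\overline{B'(\delta')}$ sits in the open simplex $\Delta$ of $R$ contained in $S$, so $\x_0\in S_\delta$ for $\delta\ll\delta'$. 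Only then is property (iii) invoked to produce an open set $U$ with $S_{\tilde\delta}\subset U\subset S_{\delta,\eps}$ containing $\x_0$, and $\x_{\eps'}\in U$ once $\eps'\ll\eps$.

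For $T\subset V''$: the definition $I:=\{i_\nu : t_{i_\nu}>\max_{j_\mu\in J\setminus I}t_{j_\mu}\}$ is circular, and even after disambiguating it (there are several threshold partitions of the sorted coordinates) it is not clear that the simplex $B$ spanned by $\{v_{i_\nu}\}_{i_\nu\in I}$ lies in $\widehat S$: the ``dominant sub-flag'' of $K$ at $x$ has no reason to correspond to a simplex of $S$ just from the simplicial compatibility of $R'$. The perturbation step is also unjustified and, once the limit is taken, unnecessary. The paper again takes $\x_0=\lim_{\eps\searrow 0}\x_\eps\in S_\delta\subset S$; since $S$ is a union of open simplices of $\widehat S$, the simplex $B$ containing $\x_0$ is automatically in $\widehat S$, and the open set $\bigcup_{K\supset B}K_B(\eps'')$ is a neighbourhood of $\x_0$ of size bounded below independently of $\eps$, so $\x_\eps\in V''$ for $\eps$ small. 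I would rewrite both halves along these lines: fix the larger parameter, take a definable curve in the smaller one, pass to the limit, and only then apply property (iii) or the openness of $K_B(\eps'')$.
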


\begin{proof}
Let $V_{\delta,\eps}$ be the union of sets $K_B(\delta,\eps)$ over all
simplices $K$ of $\widehat R$ and simplices $B$ of $\widehat S$
such that $B\subset\overline K$.
We first show that $V_{\delta',\eps'}\subset S_{\delta,\eps}$ which immediately implies
$V' \subset T$.
Fix $\delta'$, and let $\x_{\eps'}\in V_{\delta',\eps'}$ be a definable curve.
Then $\x_0:=\lim_{\eps'\searrow 0} \x_{\eps'}\in \overline {B(\delta')}$,
where $B$ is a simplex in $\widehat S$ (this follows from Definition 2.6).
Let $\Delta$ be the simplex in $S$ containing $\x_0$.
Since every subsimplex of $\Delta$ is soft in $\Delta$, $\x_0 \in S_\delta$ for
$\delta \ll \delta'$.
Also an open neighborhood of $\x_0$ in $G$, of the size
independent of $\eps'$, is contained in $S_{\delta,\eps}$ for
$\eps\ll\delta\ll\delta'$.
Hence $\x_{\eps'}\in S_{\delta,\eps}$ for $\eps'\ll\eps\ll\delta\ll\delta'$.

Next, we show that $S_{\delta,\eps}\subset V''$, and therefore $T \subset V''$.
Fix $\delta$, and let $\x_{\eps}\in S_{\delta,\eps}$ be a definable curve.
Then $\x_0:=\lim_{\eps \searrow 0} \x_{\eps}\in S_{\delta}$.
Hence  $\x_0$ belongs to a simplex $B$ of $\widehat S$.
According to Definition~\ref{def:V''}, an open neighbourhood of $\x_0$ of the radius
larger than $\eps$ is contained in $K_B(\eps'')$ for any
simplex $K$ of $\widehat R$ such that $B \subset \overline K$.
In particular, $\x_\eps \in K_B(\eps'')$ and therefore $\x_\eps \in V''$.
\end{proof}

\begin{lemma}\label{le:V'eqV''}
The inclusion map $\iota:\> V' \hookrightarrow V''$ induces isomorphisms of homotopy groups
$\iota_{k \#}:\> \pi_k(V') \to \pi_k(V'')$ for every $k \le m-1$, and an epimorphism
$\iota_{m \#}$.
\end{lemma}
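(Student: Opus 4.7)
The plan is to apply the Nerve Theorem (Theorem~\ref{th:nerve}) to both $V'$ and $V''$ using their natural open coverings, observe that the two resulting nerves are canonically isomorphic, and verify that $\iota$ realizes this isomorphism up to homotopy. Specifically, applying Theorem~\ref{th:nerve}(i) to the covering $\{V'_B\}_{B\in\widehat S}$ of $V'$---each $V'_B$ is open and $(m-1)$-connected by Lemma~\ref{le:V_B}, and every nonempty finite intersection is again of the same form (by Lemma~\ref{le:B_B'}(i) and the analysis in the proof of Theorem~\ref{th:VS}), hence also $(m-1)$-connected---produces a continuous nerve map $\psi'\colon V'\to|\mathcal N'|$ inducing isomorphisms on $\pi_k$ for $k\le m-1$ and an epimorphism for $k=m$. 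Applying Theorem~\ref{th:nerve}(ii) to the covering $\{U_B\}_{B\in\widehat S}$ of $V''$, exactly as in the proof of Lemma~\ref{le:V''}, yields a homotopy equivalence $\psi''\colon V''\to|\mathcal N''|$. Both nerves canonically identify with the nerve $\mathcal N_{\widehat S}$ of the covering of $\widehat S$ by its closed simplices, giving an isomorphism $\xi\colon|\mathcal N'|\to|\mathcal N''|$.

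The critical step is to prove $\psi''\circ\iota\simeq\xi\circ\psi'$ as maps $V'\to|\mathcal N''|$. Take the nerve maps to be barycentric maps induced by partitions of unity subordinate to the respective coverings, so that for each $x\in V'$ both $\psi''(\iota(x))$ and $\xi(\psi'(x))$ are convex combinations of vertices in
$$\Theta(x):=\{B\in\widehat S:x\in V'_B\}\cup\{B\in\widehat S:x\in U_B\}.$$
I claim $\Theta(x)$ is itself a simplex of $\mathcal N''$, so its geometric realization $|\Theta(x)|$ is convex and the straight-line homotopy from $\psi''(\iota(x))$ to $\xi(\psi'(x))$ inside $|\Theta(x)|\subseteq|\mathcal N''|$ is well-defined and continuous in $x$. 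To prove the claim, note that $x$ lies in a unique open simplex $K$ of $\widehat R$, and the barycentric coordinates of $x$ in $K$ determine a linearly ordered chain of ``strict-gap cuts'' of the vertex set of $K$. Every $B$ with $x\in U_B$ is such a cut (in $\widehat S$) whose coordinate sum exceeds $1-\eps''$, while every $B$ with $x\in V'_B$ contains as a face some cut $B''\in\widehat S$ with $\sum t>1-\eps'_i$ for some $i$. Because $\eps'_i\ll\eps''$ via~(\ref{eq:ed}), each such $B''$ itself satisfies $\sum t>1-\eps''$ and hence belongs to $\{B:x\in U_B\}$. The smallest element $B^{\min}$ of this (totally ordered) chain of cuts is therefore a common face of every $B\in\Theta(x)$, so $\bigcap_{B\in\Theta(x)}\overline B\supseteq\overline{B^{\min}}\neq\emptyset$, confirming $\Theta(x)\in\mathcal N''$.

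The resulting homotopy gives $\psi''_\#\circ\iota_\#=\xi_\#\circ\psi'_\#$ on every $\pi_k$. Since $\xi_\#$ and $\psi''_\#$ are isomorphisms and $\psi'_\#$ is an isomorphism for $k\le m-1$ and an epimorphism for $k=m$, the same properties are forced upon $\iota_\#$, proving the lemma. The main obstacle is the verification that $\Theta(x)\in\mathcal N''$; it rests on the combinatorial observation that the strict-gap cuts of $x$'s barycentric coordinates in $K$ are nested, together with the parameter inequality $\eps'_i\ll\eps''$ from~(\ref{eq:ed}) which places every $V'$-witness $B''$ back into the chain of $\eps''$-cuts.
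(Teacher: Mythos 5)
Your overall plan mirrors the paper's — compare nerve maps for $V'$ and $V''$ and use a commutative diagram — but the choice of covering for $V''$ introduces a genuine gap. You cover $V''$ by the sets $U_B=\bigcup_K K_B(\eps'')$ as in Lemma~\ref{le:V''}; the nerve of this covering has $\{B_0,\ldots,B_k\}$ as a simplex precisely when the $B_i$ form a flag after reordering. The paper instead covers $V''$ by the strictly larger sets $V''_B:=\bigcup_{B'\in S_B,\,K}K_{B'}(\eps'')$, chosen so that $V'_B\subset V''_B$; as a consequence the nerve of $\{V''_B\}$ agrees with the nerve $\mathcal N'$ of $\{V'_B\}$, whose simplices $\{B_0,\ldots,B_k\}$ are characterized by $\bigcap_i\overline{B_i}\neq\emptyset$ — a \emph{weaker} condition than being a flag (two simplices of $\widehat S$ can share a common face without being nested). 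With your covering, $\mathcal N'$ and $\mathcal N''$ are not isomorphic in general, so the ``canonical'' $\xi\colon|\mathcal N'|\to|\mathcal N''|$ you invoke does not exist, and the diagram you want to homotopy-commute is not even well-posed.

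The same difficulty invalidates your key claim that $\Theta(x)$ is a simplex of $\mathcal N''$. If $x$ lies in the open simplex $K_0$ and $B'$ is a cut of its barycentric coordinates with $\sum_{v\in B'}t_v>1-\eps'_i$, then $x\in V'_B$ for \emph{every} $B\in\widehat S$ with $B'\subset\overline B\subset\overline{K_0}$ (since $V_B$ in Definition~\ref{def:V} takes a union over all $B'\in S_B$). Two such $B_1,B_2$ need not be nested, so $\Theta(x)$ need not be a flag. Your verification only establishes $\bigcap_{B\in\Theta(x)}\overline B\supseteq\overline{B^{\min}}\neq\emptyset$, which means $\Theta(x)$ is a simplex of the nerve of closed simplices of $\widehat S$, not of the flag nerve of $\{U_B\}$; these are different complexes. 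The remedy is exactly the paper's move: replace $U_B$ by $V''_B$, after which the inclusion $V'_B\subset V''_B$ immediately yields both the isomorphism of nerves and the commutativity, with no need for the straight-line homotopy argument at all.
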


\begin{proof}
Recall that $V'$ admits an open covering  by sets of the
kind $V_B':=V_B$ (Definition~\ref{def:V}), over all simplices $B$ in $\widehat S$, such that
every non-empty intersection of sets $V_B'$ is $(m-1)$-connected (Lemma~\ref{le:V_B}).
Similarly, the set $V''$ has an open covering by sets $V_B''$,
where $V_B''$ is the union of sets $K_{B'}( \eps'')$ over all simplices
$B' \in S_B$, and simplices $K$ of $\widehat R$ such that $B \subset \overline K$.
Every non-empty intersection of sets $V_B''$ is contractible
(cf. the proof of Lemma~\ref{le:V''}).
Let ${\mathcal N}'$ and ${\mathcal N}''$ be the respective nerves of these two coverings.

The inclusion relation $V_B' \subset V_B''$ induces the canonical isomorphism
$\xi:\> |{\mathcal N}'| \to |{\mathcal N}''|$ of geometric realizations of nerves
${\mathcal N}'$ and ${\mathcal N}''$.
According to Nerve Theorem (Theorem~\ref{th:nerve} (i), details in \cite{Bjorner}) there are a
continuous maps $f':\> V' \to |{\mathcal N}'|$ and $f'':\> V'' \to |{\mathcal N}''|$,
such that for any $B$ in $\widehat S$, $f'(V'_B)$ and $f''(V''_B)$ are vertices
in ${\mathcal N}'$ and ${\mathcal N}''$ respectively, with $f''(V''_B)= \xi (f'(V'_B))$.
Also, $f'(V'_{B_1} \cap \cdots \cap V'_{B_k})$ and
$f''(V''_{B_1} \cap \cdots \cap V''_{B_k})$ are simplices spanned by sets of vertices
$\{ f'(V'_{B_1}), \ldots , f'(V'_{B_k}) \}$ and $\{ f''(V''_{B_1}), \ldots , f''(V''_{B_k}) \}$
respectively, and such that
$f''(V''_{B_1} \cap \cdots \cap V''_{B_k})= \xi (f'(V'_{B_1} \cap \cdots \cap V'_{B_k}))$.
Herewith, the map $f'$ induces isomorphisms of homotopy groups $f'_{\#}:\> \pi_k(V') \to
\pi_k(|{\mathcal N}'|)$ for every $k \le m-1$, and an epimorphism $f'_{\#}:\> \pi_m(V') \to
\pi_m(|{\mathcal N}'|)$, while $f''$ is a homotopy equivalence.
It follows that the diagram
$$
\begin{CD}
V' @> \iota > \hookrightarrow >  V''\\
@V f' V  V @V f'' V \simeq V\\
|{\mathcal N}'| @> \xi >> |{\mathcal N}''|
\end{CD}
$$
is commutative.
Since $\xi \circ f'$ induces isomorphisms of homotopy groups $\pi_k$ for
every $k \le m-1$, and epimorphisms for $k=m$, while $f''$ induces isomorphisms for any
$k \ge 0$, the inclusion map $\iota$ also induces isomorphisms for
every $k \le m-1$, and an epimorphism for $k=m$.
\end{proof}

\begin{lemma}\label{le:TV1}
For every $k \le m$, there are epimorphisms $\zeta_k:\> \pi_k(T) \to \pi_k(V'')$ and
$\eta_k:\> H_k(T) \to H_k(V'')$.
\end{lemma}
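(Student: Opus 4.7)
The plan is to exploit the inclusion chain $V' \subset T \subset V''$ from Lemma~\ref{le:VsubT} together with Lemma~\ref{le:V'eqV''}, which already provides epimorphisms $\iota_{\# k}\colon \pi_k(V') \to \pi_k(V'')$ for every $k \le m$ (isomorphism for $k \le m-1$, epimorphism for $k=m$). The key categorical observation is that whenever a surjective homomorphism factors as $f = g \circ h$, the outer factor $g$ must itself be surjective.

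First, I would observe that the inclusion $\iota\colon V' \hookrightarrow V''$ decomposes as $V' \hookrightarrow T \hookrightarrow V''$. Letting $\zeta_k\colon \pi_k(T) \to \pi_k(V'')$ denote the homomorphism induced by the second inclusion, functoriality of $\pi_k$ yields
$$\iota_{\# k} = \zeta_k \circ (V' \hookrightarrow T)_{\# k}.$$
Since the left-hand side is an epimorphism for every $k \le m$ by Lemma~\ref{le:V'eqV''}, the factor $\zeta_k$ must be an epimorphism as well.

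For the homology statement, I would first upgrade Lemma~\ref{le:V'eqV''} using the Whitehead Theorem (Theorem~\ref{th:Whitehead2}): since $\iota_{\# k}$ is an isomorphism for $k < m$ and an epimorphism for $k = m$, the induced map on homology $\iota_{\ast k}\colon H_k(V') \to H_k(V'')$ is an isomorphism for $k < m$ and an epimorphism for $k = m$, and therefore an epimorphism for every $k \le m$. Applying the same factorization $\iota_{\ast k} = \eta_k \circ (V' \hookrightarrow T)_{\ast k}$ shows that the map $\eta_k\colon H_k(T) \to H_k(V'')$ induced by $T \hookrightarrow V''$ is an epimorphism for every $k \le m$.

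I do not foresee any substantive obstacle; the lemma is essentially a formal consequence of Lemmas~\ref{le:VsubT} and \ref{le:V'eqV''} combined with one invocation of Whitehead. The only technicality worth flagging is basepoint-independence of the homotopy groups, which requires path-connectedness of $T$, $V'$, and $V''$. For $T$ this is Lemma~\ref{le:components} under the standing assumption $m > 0$; for $V'$ and $V''$ it follows from the nerve-theoretic descriptions used in the proofs of Lemmas~\ref{le:V_B} and \ref{le:V''}, where the covering nerves reduce to those of the connected complex $\widehat S$.
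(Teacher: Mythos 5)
Your proof is correct and follows essentially the same route as the paper: factor the inclusion $V' \hookrightarrow V''$ through $T$, note that $(q\circ p)_\#$ is an epimorphism for $k\le m$ by Lemma~\ref{le:V'eqV''}, conclude $q_\#$ is an epimorphism, and then transfer to homology via Whitehead (Theorem~\ref{th:Whitehead2}). The only addition on your part is the explicit remark on path-connectedness for basepoint-independence, which the paper leaves implicit.
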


\begin{proof}
Due to Lemmas~\ref{le:VsubT}, \ref{le:V'eqV''},
$$V' \stackrel{p}{\hookrightarrow} T \stackrel{q}{\hookrightarrow} V'',$$
where $\hookrightarrow$ are the inclusion maps, and $q \circ p$ induces isomorphisms
$(q \circ p)_\#=q_\# \circ p_\#$ of
homotopy groups $\pi_k(V') \cong \pi_k(V'')$ for every $k \le m-1$, and an epimorphism
$\pi_m(V') \to \pi_m(V'')$.
Then $\zeta_k := q_\#$ is an epimorphism for every $k \le m$.

By Whitehead Theorem (Theorem~\ref{th:Whitehead2}), $q \circ p$ also induces isomorphisms
$(q \circ p)_\ast=q_\ast \circ p_\ast$ of homology groups $H_k(V') \cong H_k(V'')$ for every
$k \le m-1$, and an epimorphism $H_k(V') \to H_k(V'')$.
Hence, $\eta_k := q_\ast$ is an epimorphism for every $k \le m$.
\end{proof}

Theorem~\ref{th:main}(i) immediately follows from Lemmas~\ref{le:V''} and \ref{le:TV1}.

\subsection{Separability and constructible case}

\begin{definition}\label{def:separ}
For the simplicial complex $R$ and the family $\{ S_\delta \}$ we call the pair
$(R, \{ S_\delta \})$ {\em separable} if for any pair $(\Delta_1, \Delta_2)$ of simplices of
$S$ such that $\Delta_1$ is a
subsimplex of $\Delta_2$, the equality $\overline {\Delta_2 \cap S_\delta} \cap \Delta_1
=\emptyset$ is equivalent to the inclusion
$\Delta_1 \subset \overline {\Delta_2 \setminus S_\delta}$ for all sufficiently
small $\delta >0$.
\end{definition}

Recall that in the constructible case we assume that $S$ is defined by a Boolean combination
of equations and inequalities with continuous definable functions, and the set $S_\delta$
is defined using sign sets of these functions (see Section~\ref{sec:main}).

\begin{lemma}
In the constructible case $(R, \{ S_\delta \})$ is separable.
\end{lemma}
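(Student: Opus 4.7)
My plan begins with the observation that (A)$\Rightarrow$(B) holds with no constructibility assumption and uniformly in $\delta>0$: since $\Delta_1\subset\overline{\Delta_2}$ and $\Delta_2=(\Delta_2\cap S_\delta)\sqcup(\Delta_2\setminus S_\delta)$, we have $\overline{\Delta_2}=\overline{\Delta_2\cap S_\delta}\cup\overline{\Delta_2\setminus S_\delta}$, so if the first closure is disjoint from $\Delta_1$, the second must contain $\Delta_1$ entirely.

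For the converse (B)$\Rightarrow$(A), my key preparatory step is to refine the triangulation $R'$ of $G\times[0,1]$ so that, in addition to the compatibilities already imposed in Section~5, it is also compatible with each cylinder $\{h_i=0\}\times[0,1]$, $\{h_i>0\}\times[0,1]$, $\{h_i<0\}\times[0,1]$ for $i=1,\ldots,k$. This is permitted by the standard o-minimal triangulation theorem (Coste, Th.~4.4) since only finitely many definable subsets are added to the compatibility list. Consequently every open simplex of the induced triangulation $R$ of $G=\rho^{-1}(0)$ lies in a single sign stratum of the $h_i$'s. Writing the sign set of $\Delta_2$ as $\{h_{i_\kappa}=0,\ h_{j_s}>0,\ h_{l_t}<0\}$, continuity gives $h_{j_s}\ge 0$ and $h_{l_t}\le 0$ on $\overline{\Delta_2}\supset\Delta_1$; combined with the new sign-stratum compatibility, this forces each $h_{j_s}$ on $\Delta_1$ to be either identically zero or strictly positive, and each $h_{l_t}$ to be either identically zero or strictly negative.

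I would then conclude by a case split. If some $h_{j_s}$ (or $h_{l_t}$) vanishes identically on $\Delta_1$, then any $x\in\Delta_1$ admits an open neighborhood in $G$ on which $h_{j_s}<\delta$ for every $\delta>0$, separating $x$ from $\Delta_2\cap S_\delta\subset\{h_{j_s}\ge\delta\}$; hence (A), and by the first paragraph (B), hold for every $\delta>0$. Otherwise all $h_{j_s}>0$ and all $h_{l_t}<0$ strictly on $\Delta_1$; pick any $x_0\in\Delta_1$ and set $\alpha:=\min_{s,t}\{h_{j_s}(x_0),-h_{l_t}(x_0)\}>0$. For any $\delta<\alpha$, a sequence $y_n\to x_0$ with $y_n\in\Delta_2$ eventually satisfies $h_{j_s}(y_n)>\delta$ and $h_{l_t}(y_n)<-\delta$, placing $y_n$ in the $\delta$-thickening of the sign set of $\Delta_2$, and therefore in $\Delta_2\cap S_\delta$. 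This simultaneously shows $x_0\in\overline{\Delta_2\cap S_\delta}$ (so (A) fails) and $x_0\notin\overline{\Delta_2\setminus S_\delta}$ (so (B) fails). A uniform $\delta_0$ is obtained by minimizing $\alpha$ over the finitely many simplex pairs.

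The main obstacle I anticipate is verifying that the additional sign-stratum compatibility can be imposed on $R'$ without invalidating the arguments of Sections~3--5 that relied on the original $R$; however, since only finitely many definable subsets are being added and the triangulation theorem accommodates such refinements, this amounts to routine bookkeeping rather than a new technical difficulty.
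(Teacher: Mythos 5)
Your proof is correct and takes essentially the same route as the paper's: both hinge on the triangulation being compatible with the sign sets of $\{h_1,\ldots,h_k\}$, and both split into the same two cases (some $h_i$ vanishes on $\Delta_1$ but is of strict sign on $\Delta_2$, i.e.\ different sign sets; versus all strict inequalities of $\Delta_2$'s sign set remain strict on $\Delta_1$, i.e.\ same sign set), concluding that in the first case both conditions hold and in the second both fail. Your preliminary observation that $\overline{\Delta_2\cap S_\delta}\cap\Delta_1=\emptyset$ already forces $\Delta_1\subset\overline{\Delta_2\setminus S_\delta}$ is a useful simplification the paper doesn't isolate, and your explicit refinement of $R'$ to be compatible with the sign strata is a cleaner justification of what the paper merely ``observes'' ($R$ compatible with the sign set decomposition of $S$); neither changes the substance of the argument.
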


\begin{proof}
Observe that $R$ is compatible with the sign set decomposition of $S$.

Consider a pair $(\Delta_1, \Delta_2)$ of simplices of $S$ such that $\Delta_1$ is a subsimplex
of $\Delta_2$.
If both $\Delta_1$ and $\Delta_2$ lie in the same sign set, then
$\overline {\Delta_2 \cap S_\delta} \cap \Delta_1 = \overline{\Delta_1 \cap S_\delta}
\neq \emptyset$ and
$\Delta_1 \not\subset \overline {\Delta_2 \setminus S_\delta}$.

If $\Delta_1$ and $\Delta_2$ lie in two different sign sets, then there is a function $h$
in the Boolean combination defining $S$ such
that $h(\x)=0$ for every point $\x \in \Delta_1$, while $h(\y)$ satisfies a strict inequality,
say $h(\y)>0$, for every point $\y \in \Delta_2$.
Then $\overline {\Delta_2 \cap S_\delta} \subset \overline{ \Delta_2 \cap \{ h \ge \delta \}}$
and $\overline{ \Delta_2 \cap \{ h \ge \delta \}} \cap \{ h=0 \}= \emptyset$.
Hence $\overline {\Delta_2 \cap S_\delta} \cap \Delta_1= \emptyset$.
On the other hand, $\overline {\Delta_2 \setminus S_\delta} \supset
\overline {\Delta_2 \cap \{ h< \delta \}} \supset \overline {\Delta_2 \cap \{ h=0 \}}
\supset \Delta_1$.
\end{proof}

Now we return to the general definable case, and assume for the rest of this section
that $(R, \{ S_\delta \})$ is separable.
For any pair $(\Delta_1, \Delta_2)$ of simplices of $S$ such that $\Delta_1$ is a
subsimplex of $\Delta_2$, we assume that $\Delta_1$ is {\em soft} in $\Delta_2$ if
$\overline {\Delta_2 \cap S_\delta} \cap \Delta_1 = \emptyset$ (equivalently,
$\Delta_1 \subset \overline {\Delta_2 \setminus S_\delta}$)
for all sufficiently small $\delta >0$.
Otherwise, $\Delta_1$ is {\em hard} in $\Delta_2$.

\begin{lemma}\label{le:U_x}
If $\Delta_1$ is hard in $\Delta_2$, then for every $\x \in \Delta_1$ there is a neighbourhood
$U_\x$ of $\x$ in $\overline \Delta_2$ such that for all sufficiently small $\delta \in (0,1)$,
$U_\x \subset \overline {\Delta_2 \cap S_\delta}$.
\end{lemma}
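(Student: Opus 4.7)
The plan is to exploit the triangulation $R'$ of $G\times[0,1]$, compatible with $Y:=\bigcup_{\delta>0}(S_\delta,\delta)$, via the open star of $\tau:=\Delta_1\times\{0\}$ in $R'$. We may further assume $R'$ is compatible with $\overline{\Delta_2}\times[0,1]$ and with $\partial\Delta_2\times[0,1]$ (which can be arranged by refining the triangulation while preserving compatibility with $Y$). Let $\mathrm{St}(\overline\tau)$ denote the union of all open simplices $\sigma\in R'$ with $\overline\tau\subset\overline\sigma$; this is an open neighbourhood of $\tau$ in $G\times[0,1]$, and compatibility forces each such $\sigma$ to be either contained in $Y$ or disjoint from $Y$. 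Moreover, if $\sigma\in\mathrm{St}(\overline\tau)$ meets $\Delta_2\times(0,1]$, then $\sigma\subset\Delta_2\times(0,1]$.

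The key claim is: every $\sigma\in\mathrm{St}(\overline\tau)$ meeting $\Delta_2\times(0,1]$ satisfies $\sigma\subset Y$. Suppose for contradiction that $\sigma\cap Y=\emptyset$. For any $x\in\overline{\Delta_1}$ one has $(x,0)\in\overline\tau\subset\overline\sigma$, so by definable curve selection there is a definable curve $(y(t),\delta(t))\in\sigma$ with $y(t)\in\Delta_2$, $\delta(t)>0$, and $(y(t),\delta(t))\to(x,0)$. Disjointness from $Y$ gives $y(t)\in\Delta_2\setminus S_{\delta(t)}$. The monotonicity clause in Definition~\ref{def:S_delta} ($\delta'<\delta\Rightarrow S_{\delta'}\supset S_\delta$, hence $\Delta_2\setminus S_{\delta'}\subset\Delta_2\setminus S_\delta$) then lets one fix any small $\delta>0$ and, for $t$ small enough that $\delta(t)<\delta$, conclude $y(t)\in\Delta_2\setminus S_\delta$. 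Passing $t\to 0$ yields $x\in\overline{\Delta_2\setminus S_\delta}$, and since $x\in\overline{\Delta_1}$ was arbitrary, $\Delta_1\subset\overline{\Delta_2\setminus S_\delta}$ for all small $\delta>0$. By separability (Definition~\ref{def:separ}) this makes $\Delta_1$ soft in $\Delta_2$, contradicting the hypothesis.

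Granted the claim, the construction of $U_x$ is routine. Fix $x\in\Delta_1$ and choose a basic product neighbourhood $V\times[0,\delta_0)\subset\mathrm{St}(\overline\tau)$ of $(x,0)$ in $\overline{\Delta_2}\times[0,1]$, with $V\subset\overline{\Delta_2}$ open and $\delta_0>0$. For every $(y,\delta)\in(V\cap\Delta_2)\times(0,\delta_0)$ the unique open simplex of $R'$ through $(y,\delta)$ lies in $\mathrm{St}(\overline\tau)$ and meets $\Delta_2\times(0,1]$, so by the claim is contained in $Y$, giving $y\in S_\delta$. Hence $V\cap\Delta_2\subset S_\delta$ for $\delta\in(0,\delta_0)$, and the density of $\Delta_2$ in $\overline{\Delta_2}$ yields $V\subset\overline{V\cap\Delta_2}\subset\overline{\Delta_2\cap S_\delta}$; set $U_x:=V$. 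The main obstacle is the step inside the key claim that converts the "as $\delta\to 0$" information along a single curve in a $Y$-avoiding simplex into a pointwise-in-$\delta$ inclusion strong enough to contradict separability; this conversion rests essentially on the monotonicity of the family $\{S_\delta\}$, without which the curve would only produce $x\in\overline{Y^c}\cap(G\times\{0\})$ rather than the required $x\in\overline{\Delta_2\setminus S_\delta}$ for each small $\delta$.
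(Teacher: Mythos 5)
Your overall strategy is the same as the paper's: both proofs hinge on the compatibility of the triangulation $R'$ of $\rho\colon G\times[0,1]\to[0,1]$ with $Y=\bigcup_\delta(S_\delta,\delta)$, and both convert ``hard'' into a statement about the limit behaviour of $\Delta_2\setminus S_\delta$ along $\rho^{-1}(0)$, then invoke separability for the contradiction. Your version is more explicit: the open star of $\tau=\Delta_1\times\{0\}$, curve selection inside a $Y$-avoiding simplex, and the monotonicity step converting $y(t)\in\Delta_2\setminus S_{\delta(t)}$ into $y(t)\in\Delta_2\setminus S_\delta$ for a \emph{fixed} $\delta$ once $\delta(t)<\delta$ are all correctly reasoned, and the passage from ``$V\cap\Delta_2\subset S_\delta$'' to ``$V\subset\overline{\Delta_2\cap S_\delta}$'' via density of $\Delta_2$ in $\overline{\Delta_2}$ is fine.

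The genuine gap is the sentence asserting that $R'$ ``may further be assumed compatible with $\overline{\Delta_2}\times[0,1]$ and $\partial\Delta_2\times[0,1]$, which can be arranged by refining.'' This is circular as stated: $R$, and therefore the simplices $\Delta_1,\Delta_2$, the hard/soft marking, and the separability hypothesis, are all \emph{defined} as the trace of $R'$ on $\rho^{-1}(0)$. Refining $R'$ subdivides $R$, so $\Delta_2$ is no longer a simplex of the new induced triangulation and the cylinder $\overline{\Delta_2}\times[0,1]$ is a cylinder over a set that is now a union of simplices, not a simplex. Yet without this compatibility you cannot assert that a simplex $\sigma$ of $R'$ containing $\tau$ as a face and meeting $\Delta_2\times(0,1]$ is actually \emph{contained} in $\Delta_2\times(0,1]$ (a simplex of a triangulation of the map $\rho$ can ``shear'' sideways as $\delta$ decreases, since only $\rho$ is required to be affine, not the projection to $G$); and without that containment the curve selection in $\sigma$ need not produce a curve with $y(t)\in\Delta_2$, which is exactly what feeds separability (which speaks of $\overline{\Delta_2\setminus S_\delta}$, not $\overline{\overline{\Delta_2}\setminus S_\delta}$). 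You need either to build the cylinder-compatibility into the original choice of $R'$ simultaneously with the compatibility with $Y$ (rather than retrofitting it by refinement), or to rerun the argument on a refinement and then descend the conclusion, for each simplex of the subdivided $\Delta_1$, back to the original $\Delta_1$. The paper's own proof invokes ``compatibility of $R$ with $R'$'' at the corresponding step without spelling this out either, so you are in the same position as the authors, but as written your justification does not close the circle.
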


\begin{proof}
Suppose that contrary to the claim, for some $\x \in \Delta_1$,
$U_\x \setminus \overline {\Delta_2 \cap S_\delta} \neq \emptyset$ for any neighbourhood
$U_\x$ of $\x$ in $\Delta_1$, for arbitrarily small $\delta >0$.

Since the set $S_\delta$ grows (with respect to inclusion) as $\delta \searrow 0$,
and $\Delta_1$ is hard in $\Delta_2$, the intersection
$\overline {\Delta_2 \cap S_\delta} \cap \Delta_1$ is non-empty and also grows.
If for any neighbourhood $W_\x$ of $\x$ in $\Delta_1$,
$W_\x \not\subset \overline {\Delta_2 \cap S_\delta} \cap \Delta_1$ for arbitrarily small
$\delta >0$, then the limits of both $\overline {\Delta_2 \cap S_\delta} \cap \Delta_1$
and its complement in $\Delta_1$, as $\delta \searrow 0$, have non-empty intersections with
$\Delta_1$.
This contradicts to the assumption that $\Delta_1$ is a simplex in the complex $R$ compatible
with $R'$, thus there is a neighbourhood $W_\x$ in $\Delta_1$ such that
$W_\x \subset \overline {\Delta_2 \cap S_\delta} \cap \Delta_1$ for sufficiently small
$\delta >0$.
It follows that $U_\x \setminus \overline {\Delta_2 \cap S_\delta} \subset \Delta_2$.
Since $\x \in \overline{\Delta_2 \setminus S_\delta}$, and using again the compatibility of
the complex $R$ with $R'$, we conclude that
$\Delta_1 \subset \overline{\Delta_2 \setminus S_\delta}$,
i.e., $\Delta_1$ is soft in $\Delta_2$, which is a contradiction.
\end{proof}

In each of the following Lemmas~\ref{le:TsubV}, \ref{le:ThomT'}, and Theorem~\ref{th:TV}
the statement holds for
\begin{equation}\label{eq:eps_eps'}
0 < \eps'_0 \ll \cdots \ll \eps'_i \ll \eps_i \ll \delta_i \ll \delta'_i \ll \cdots
\ll \delta'_m \ll 1 \quad (i=0, \ldots ,m).
\end{equation}

\begin{lemma}\label{le:TsubV}
$T' \subset V$ and $V' \subset T$.
\end{lemma}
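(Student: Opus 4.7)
The plan is to adapt the curve-selection argument of Lemma~\ref{le:VsubT} to the separable setting by using the refined hard--soft marking together with Lemma~\ref{le:U_x} wherever the earlier proof invoked universal softness. I handle the two inclusions in parallel, exploiting the parameter chain (\ref{eq:eps_eps'}) to convert limits as the relevant $\eps'_j \searrow 0$ into inclusions involving the nearby parameters $\eps_i, \delta_i, \delta'_i$.

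For $V' \subset T$, take $\x \in K_B(\delta'_i, \eps'_i)$, where $B = B(i_0, \ldots, i_k) \in \widehat S$ is a face of a simplex $K = K(j_0, \ldots, j_\ell)$ of $\widehat R$. Regard $\x = \x(\eps'_i)$ as a definable curve and let $\x_0 = \lim_{\eps'_i \searrow 0} \x(\eps'_i)$. The inequalities $\sum_I t_{i_\nu} > 1-\eps'_i$ and $t_{i_\nu} > t_{j_\mu}$ for $i_\nu \in I$, $j_\mu \in J \setminus I$ force $\x_0 \in \overline B$, and the core inequality $\sum_{C(B)} t_{i_\nu} > \delta'_i$ is preserved in the limit. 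Iterating Lemma~\ref{le:U_x} up the hard chain $\Delta_{i_p} \subset \overline{\Delta_{i_{p-1}}} \subset \cdots \subset \overline{\Delta_{i_0}}$ encoded by $C(B)$ places $\x_0$ in $\overline{\Delta_{i_0} \cap S_{\delta_i}}$ for $\delta_i \ll \delta'_i$, and compactness of $S_{\delta_i}$ upgrades this to $\x_0 \in S_{\delta_i}$. Since $S_{\delta_i, \eps_i}$ contains an open neighborhood of $S_{\delta_i}$ of a size independent of $\eps'_i$ (by Definition~\ref{def:S_delta}(iii)), the curve $\x(\eps'_i)$ lies in $S_{\delta_i, \eps_i} \subset T$ as soon as $\eps'_i \ll \eps_i$.

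For $T' \subset V$, take $\x \in S_{\delta'_i, \eps'_i}$ and let $K$ be the open simplex of $\widehat R$ containing $\x$. The limit $\x_0 = \lim_{\eps'_i \searrow 0} \x(\eps'_i)$ lies in $S_{\delta'_i} \cap \overline K$, and compatibility of $R$ with $\bigcup_\delta (S_\delta, \delta)$ guarantees that the unique open simplex $B' \subset \overline K$ containing $\x_0$ lies in $\widehat S$; its vertex set $I'$ is the candidate face of $K$. For $\eps'_i$ sufficiently small the dominance $t_{i_\nu} > t_{j_\mu}$ for $i_\nu \in I'$, $j_\mu \in J \setminus I'$, and the concentration $\sum_{I'} t_{i_\nu} > 1-\eps_i$ both hold. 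For the core inequality $\sum_{C(B')} t_{i_\nu} > \delta_i$, observe that any vertex $v_{i_\nu}$ with $i_\nu \in I' \setminus C(B')$ corresponds to a soft link in the flag of $B'$; by separability (Definition~\ref{def:separ}), substantial mass on such a vertex would place $\x_0$ into $\overline{\Delta \setminus S_{\delta'_i}}$, contradicting $\x_0 \in S_{\delta'_i}$. Quantifying this dichotomy produces $\sum_{C(B')} t_{i_\nu} > \delta_i$ once $\delta_i \ll \delta'_i$, and therefore $\x \in K_{B'}(\delta_i, \eps_i) \subset V$.

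The main obstacle is the $T' \subset V$ direction, specifically the strict core inequality $\sum_{C(B')} t_{i_\nu} > \delta_i$. This is the step that requires extracting quantitative barycentric information from the qualitatively defined hard--soft marking; the separability hypothesis combined with the iterated form of Lemma~\ref{le:U_x} is exactly what makes such quantification possible, and without it the argument reduces to the definable setting and only yields the weaker conclusion $V' \subset T \subset V''$ of Lemma~\ref{le:VsubT}.
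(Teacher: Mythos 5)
Your proposal correctly identifies the ingredients (curve selection, the hard/soft classification, separability, Lemma~\ref{le:U_x}) and the overall shape of the argument, but the two places where you verify the core inequality --- the crux in both directions --- are not actually proved, and your direct approach differs from the paper's, which argues by contradiction.

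For $V'\subset T$, you claim that ``iterating Lemma~\ref{le:U_x} up the hard chain'' encoded by $C(B)$ places $\x_0\in \overline{\Delta_{i_0}\cap S_{\delta_i}}$. This step does not go through. After the limit $\eps'_i\searrow 0$ you only know $\x_0\in\overline{B(\delta'_i)}$, i.e.\ the \emph{total} barycentric mass on the core vertices $\{v_{i_\nu}\}_{i_\nu\in C(B)}$ is at least $\delta'_i$. This does not place $\x_0$ near any point of $\Delta_{i_p}$ (the smallest core simplex): for instance $\x_0$ could carry mass $\delta'_i$ on $v_{i_0}$ and mass $1-\delta'_i$ on a non-core barycenter $v_{i_k}$, which sits on $\partial\Delta_{i_p}$. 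Lemma~\ref{le:U_x} gives a neighbourhood of a point of $\Delta_{i_p}$ inside $\overline{\Delta_{i_0}\cap S_\delta}$, so it simply does not see such an $\x_0$, and there is no natural ``iteration'' along the flag that rescues this. The paper instead \emph{assumes} $\x_0\notin S_\delta$, selects a second definable curve $\x_{0,\delta}\in B\setminus S_\delta$, passes to the limit $\x_{0,0}$ on a proper face $B'$, invokes Lemma~\ref{le:U_x} (in contrapositive form) to conclude that $\Delta_{i_0}$ is \emph{soft} in $\Delta_{j_0}$, and then deduces $\x_{0,0}\notin V_{\delta',\eps'}$ --- contradicting $\x_{\eps'}\in V_{\delta',\eps'}$.

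For $T'\subset V$, you write that ``substantial mass on such a vertex would place $\x_0$ into $\overline{\Delta\setminus S_{\delta'_i}}$,'' and you acknowledge this is the main obstacle --- but the sentence you offer is a hope, not an argument. Softness of $\Delta_{i_\nu}$ in some $\Delta_{i_\mu}$ is a qualitative statement about subsimplices (in fact, by the definition of the core, the chain is only broken at \emph{some} $\mu\le p$, not necessarily $\mu=0$); it carries no a priori bound on the barycentric coordinates of a point of $S_{\delta'_i}$. There is no direct implication from ``$\x_0\in S_{\delta'_i}\cap B$'' to ``$\sum_{C(B)}t_{i_\nu}(\x_0)>\delta_i$.'' The paper again argues by contradiction: if $\x_0\notin B(\delta)$, a further curve selection produces $\x_{0,0}$ in a face $B'$ disjoint from the core, lying in $\overline{\Delta_{j_0}\cap S_{\delta'}}\cap\Delta_{i_0}$, which by separability forces $\Delta_{i_0}$ to be \emph{hard} in $\Delta_{j_0}$, while $i_0\notin C(B)$ and Lemma~\ref{le:core} force the opposite conclusion. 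That contradiction is what delivers the quantitative bound $\x_0\in B(\delta)$ for $\delta\ll\delta'$. Without a contradiction step of this kind, your direct argument only yields what you already know, namely that the core mass of $\x_0$ is positive --- not that it exceeds $\delta_i$.
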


\begin{proof}
We show first that $S_{\delta',\eps'}\subset V_{\delta,\eps}$,
for $\eps'\ll\eps\ll\delta\ll\delta'$, where
$V_{\delta,\eps}$ is the union of $K_B(\delta,\eps)$ over all
simplices $K$ of $\widehat R$ and simplices $B$ of $\widehat S$
such that $B\subset\overline K$.

Let us fix $\delta'$, and let $\x_{\eps'}\in S_{\delta',\eps'}$ be any definable curve.
It is enough to show that $\x_{\eps'}\in V_{\delta,\eps}$ for
$\eps' \ll \eps\ll\delta\ll\delta'$.
Clearly, $\x_0=:\lim_{\eps'\searrow 0} \x_{\eps'}$ belongs to $S_{\delta'}$.
Hence  $\x_0$ belongs to a simplex $B=B(j_0, \ldots ,j_{\ell})$ of $\widehat S$.
Suppose that $\x_0 \not\in B(\delta)$.
Let $\x_{0, \delta} \in B \setminus B(\delta)$ be a definable curve.
Then $\x_{0,0}=:\lim_{\delta \searrow 0} \x_{0, \delta}$ belongs to
a subsimplex $B'=B(i_0, \ldots ,i_k)$ of $B$.
It follows that $\x_{0,0} \in \overline{ \Delta_{j_0} \cap S_{\delta'}} \cap \Delta_{i_0}$,
therefore $\Delta_{i_0}$ is {\em hard} in $\Delta_{j_0}$.
On the other hand, by the definition of $B(\delta)$ (Definition~\ref{def:K_B}),
$\Delta_{i_0}$ is {\em soft} in $\Delta_{j_0}$.
This contradiction shows that $\x_0 \in B(\delta)$.

For $\eps'\ll\eps$, the distance from $\x_{\eps'}$ to
$\x_0\in S_{\delta'}\cap B$ is much smaller than $\eps$.
From Definition 2.6, for $\eps\ll\delta\ll\delta'$,
the union of $K(\delta,\eps)$ over all simplices $K$ of $\widehat R$
such that $B \subset\overline K$ contains
an open in $G$ neighborhood of $\x_0 \in B$ of the size that is independent of $\eps'$.
Hence $\x_{\eps'}\in V_{\delta,\eps}$ for $\eps'\ll\eps\ll\delta\ll\delta'$.

Next, we want to show that $V_{\delta',\eps'}\subset S_{\delta,\eps}$.
As before, fix $\delta'$.
Let $\x_{\eps'}\in V_{\delta',\eps'}$ be a definable curve.
Then $\x_0:=\lim_{\eps'\searrow 0} \x_{\eps'}\in B(\delta')$,
where $B=B(j_0, \ldots, j_{\ell})$ is a simplex in $\widehat S$
(this follows from Definition~\ref{def:K_B}).
Suppose that $\x_0 \not\in S_\delta$, then $\x_0 \in B \setminus S_\delta$.
Let $\x_{0, \delta} \in B \setminus S_\delta$ be a definable curve.
Therefore $\x_{0,0}:=\lim_{\delta \searrow 0} \x_{0, \delta}$ belongs to
a subsimplex $B'=B(i_0, \ldots ,i_k)$ of $B$.
Then by Lemma~\ref{le:U_x}, $\Delta_{i_0}$ is {\em soft} in $\Delta_{j_0}$, and thus
$\x_{0,0} \not\in V_{\delta',\eps'}$.
The same is true for $\x_{\eps'}$ as well, namely
$\x_{\eps'} \not\in V_{\delta',\eps'}$ for $\eps' \ll \delta \ll \delta'$.
This contradiction shows that $\x_0 \in S_\delta$.

Since $\x_0\in S$, an open neighborhood of $\x_0$ in $G$, of the size
independent of $\eps'$, is contained in $S_{\delta,\eps}$ for
$\eps\ll\delta\ll\delta'$.
Hence $\x_{\eps'}\in S_{\delta,\eps}$ for $\eps'\ll\eps\ll\delta\ll\delta'$.
\end{proof}

\begin{lemma}\label{le:ThomT'}
The inclusion maps $T' \hookrightarrow T$ and $V' \hookrightarrow V$ are homotopy equivalences.
\end{lemma}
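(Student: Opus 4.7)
The plan is to establish the two inclusions separately, with complementary strategies: a nerve argument for $V'\hookrightarrow V$, and a sandwiching argument via Lemma~\ref{le:TsubV} for $T'\hookrightarrow T$.

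First I would verify both inclusions hold set-theoretically. For $V'\subset V$, note that every linear inequality defining $K_B(\delta,\eps)$ in Definition~\ref{def:K_B} becomes stricter when $\delta$ is enlarged and $\eps$ shrunk, so $K_B(\delta'_i,\eps'_i)\subset K_B(\delta_i,\eps_i)$ and hence $V'_B\subset V_B$ for every $B$. For $T'\subset T$, I would invoke property~(iii) of Definition~\ref{def:S_delta}: since $\delta_i\ll\delta'_i$, there is an open $U$ with $S_{\delta'_i}\subset U\subset S_{\delta_i,\eps_i}$; the compact nested family $\{S_{\delta'_i,\eps}\}_{\eps}$ has intersection $S_{\delta'_i}\subset U$, so compactness forces $S_{\delta'_i,\eps'_i}\subset U\subset S_{\delta_i,\eps_i}$ once $\eps'_i$ is small enough.

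For $V'\hookrightarrow V$, the key point is that the two coverings $\{V_B\}$ of $V$ and $\{V'_B\}$ of $V'$ have canonically isomorphic nerves: by Lemma~\ref{le:B_B'}(i) the pattern of nonempty multiple intersections is a purely combinatorial condition on $\widehat S$, independent of $(\eps,\delta)$. For each $B$, both $V_B$ and $V'_B$ carry the finer sub-cover $\{U_{B',i}\}$ used in the proof of Lemma~\ref{le:V_B}, whose multiple intersections are contractible by Lemma~\ref{le:contractible}; the Nerve Theorem~\ref{th:nerve}(ii) therefore identifies each of $V_B$ and $V'_B$ with the same combinatorial complex $|M_B|$ in a way that commutes with $V'_B\hookrightarrow V_B$, making the piecewise inclusion a genuine homotopy equivalence. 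The same applies to every nonempty intersection, which by Lemma~\ref{le:B_B'}(i) is again of the form $V_{B_0}$. A morphism of cofibrant diagrams indexed by the common nerve whose every arrow is a homotopy equivalence induces a homotopy equivalence on the union (homotopy colimit), so $V'\hookrightarrow V$ is a homotopy equivalence.

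For $T'\hookrightarrow T$, since $T$ carries no intrinsic simplicial cover, I would sandwich it between $V$-sets. Introducing auxiliary admissible sequences with $(\eps^{(a)},\delta^{(a)})\prec(\eps',\delta')\prec(\eps,\delta)\prec(\eps^{(c)},\delta^{(c)})$ in the ordering~$\ll$, Lemma~\ref{le:TsubV} furnishes the chain $V^{(a)}\subset T'\subset V'\subset T\subset V^{(c)}$; by the previous paragraph each inclusion between $V$'s is a homotopy equivalence, so the outer composition $V^{(a)}\hookrightarrow V^{(c)}$ is a homotopy equivalence factoring through $T'\hookrightarrow T$. To upgrade this to $T'\simeq T$ (rather than merely a $\pi_{*}$-split injection/surjection) I would iterate the sandwich with finer interpolating sequences, producing an arbitrarily long telescope in which every $T$-set is squeezed between two homotopy-equivalent $V$-sets; since $T,T'$ are definable, hence finite-dimensional CW-complexes, once the telescope is long enough the induced map on every $\pi_k$ is forced to be an isomorphism, and the Whitehead Theorem on weak homotopy equivalences (Theorem~\ref{th:Whitehead1}) promotes this to a homotopy equivalence.

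I expect the main obstacle to be precisely this telescope/sandwich argument for $T'\hookrightarrow T$: a single sandwich only forces $\pi_{*}$-injectivity/surjectivity on factors, so one must carefully organize the iteration to close the remaining gap in every dimension. A cleaner alternative would be to apply o-minimal Hardt triviality to the definable family $\{T(\eps,\delta,\ldots)\}$ along a path of admissible parameters from $(\eps,\delta)$ to $(\eps',\delta')$, extracting a definable deformation retract of $T$ onto $T'$ directly from the trivialization.
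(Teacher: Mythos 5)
Your main argument contains a genuine gap that you yourself flag, and the ``cleaner alternative'' you mention in passing at the end---o-minimal Hardt triviality applied to the family of $T$-sets over the parameter space of admissible $(\eps,\delta)$-sequences---is precisely the paper's proof, used uniformly for both inclusions. The paper treats $\eps_0,\delta_0,\ldots,\eps_m,\delta_m$ as coordinates, applies Hardt's theorem to the projection $\rho\colon T\to\Real^{2m+2}$ to get a definable trivialization over an open cell containing both parameter tuples, chooses a definable path $\gamma$ between them along which the fibres are nested, and reads off a strong deformation retraction of $T$ onto $T'$ from the fibre-preserving homeomorphism. This gives a homotopy equivalence directly, with no nerve machinery and no sandwiching.

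Your sandwich/telescope argument for $T'\hookrightarrow T$ does not close, and iterating it does not help. With a chain $V^{(1)}\subset T^{(1)}\subset V^{(2)}\subset T^{(2)}\subset\cdots$ in which all $V$-to-$V$ inclusions induce isomorphisms on $\pi_k$, one only learns that each $\pi_k(V^{(i)})\to\pi_k(T^{(i)})$ is split injective and each $\pi_k(T^{(i)})\to\pi_k(V^{(i+1)})$ is split surjective. The induced map $\pi_k(T^{(1)})\to\pi_k(T^{(2)})$ has image equal to the image of $\pi_k(V^{(2)})$ and kernel equal to the kernel of $\pi_k(T^{(1)})\to\pi_k(V^{(2)})$; neither is controlled by the sandwich, and nothing in a longer telescope forces the ``excess'' summand of $\pi_k(T^{(i)})$ to vanish or stabilize. (The paper's Theorem~\ref{th:TV} does run a telescope of this shape, but it is only able to do so because it already has Lemma~\ref{le:ThomT'} in hand asserting that the $T$-to-$T$ and $V$-to-$V$ steps are homotopy equivalences; using the telescope to prove Lemma~\ref{le:ThomT'} would be circular.) Your nerve/homotopy-colimit argument for $V'\hookrightarrow V$ could in principle be made rigorous, but as written it asserts without proof that the Nerve Theorem identifications of $V_B$ and $V'_B$ with $|M_B|$ commute with the inclusion $V'_B\hookrightarrow V_B$, and it invokes a gluing-along-a-diagram principle (homotopy colimit invariance for cofibrant diagrams) that needs to be set up; the paper avoids all of this by applying the same Hardt-triviality deformation retraction to $V$ as to $T$.
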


\begin{proof}
Proofs of homotopy equivalences are similar for the both inclusions, so we will consider
only the case of $T' \hookrightarrow T$.

Consider $\eps_0, \delta_0, \ldots , \eps_m, \delta_m$ as variables, then
$T \subset \Real^{n+2m+2}$.
From the o-minimal version of Hardt's triviality, applied to the projection
$\rho:\> T \to \Real^{2m+2}$ on the subspace of coordinates
$\eps_0, \delta_0, \ldots , \eps_m, \delta_m$, follows the existence of a partition of
$\Real^{2m+2}$ into a finite number of connected definable sets $\{ A_i \}$ such that $T$ is
definably trivial over each $A_i$, i.e., for any point
$(\bar \eps, \bar \delta):= (\eps_0, \delta_0, \ldots , \eps_m, \delta_m) \in A_i$ the pre-image
$\rho^{-1}(A_i)$ is definably homeomorphic to $\rho^{-1}(\bar \eps, \bar \delta) \times A_i$ by
a fibre preserving homeomorphism.

There exists an element $A_{i_0}$ of the partition which is an open connected set in
$\Real^{2m+2}$ and
contains both points $(\bar \eps, \bar \delta)$ and $(\bar \eps', \bar \delta')$ for
(\ref{eq:eps_eps'}).
Let $\gamma:\> [0,1] \to A_{i_0}$ be a definable simple curve such that
$\gamma (0)=(\bar \eps, \bar \delta)$ and $\gamma (1)= (\bar \eps', \bar \delta')$.
Then $\rho^{-1}(\gamma (0))=T$, $\rho^{-1}(\gamma (1))=T'$ and
$\rho^{-1}(\gamma([0,1]))$ is definably homeomorphic to $T \times \gamma([0,1])$.
Let $\Phi_{t,t'}:\> \rho^{-1}(\gamma (t')) \to \rho^{-1}(\gamma (t))$ for $0 \le t \le t' \le 1$
be the homeomorphism of fibres.
Replacing if necessary $(\bar \eps, \bar \delta)$ by a point closer to
$(\bar \eps', \bar \delta')$ along the curve $\gamma$,
we can assume that $\rho^{-1}(\gamma (t')) \subset \rho^{-1}(\gamma (t))$ for all
$0 \le t \le t' \le 1$.
Then $T'$ is a strong deformation retract of $T$ defined by the homotopy
$F:\> T \times [0,1] \to T$ as follows.
If $\x \in \rho^{-1}(\gamma(t'))$ for some $t' \le t$ and $\x \not\in \rho^{-1}(\gamma(t''))$
for any $t'' > t'$, then $F(\x, t)= \Phi_{t',\ t} (\x)$.
If $\x \in \rho^{-1}(\gamma(t'))$ with $t'>t$, then $F(\x,t)=\x$.
\end{proof}

\begin{theorem}\label{th:TV}
$T \simeq V$
\end{theorem}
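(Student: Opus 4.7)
The plan is to combine Lemmas~\ref{le:TsubV} and~\ref{le:ThomT'} by stacking four levels of parameters and then running a short 2-out-of-3 diagram chase in $\pi_*$ and $H_*$.

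Concretely, I would pick four successively ``more extreme'' parameter tuples $(\bar\eps^{(i)},\bar\delta^{(i)})$, $i=1,2,3,4$, so that between any two consecutive tuples the relation (\ref{eq:eps_eps'}) is satisfied (level $i+1$ playing the role of the primed tuple relative to level $i$). Since the relation $\ll$ is preserved by transitivity, it then also holds between levels $i$ and $i+2$, which is exactly what is required to invoke Lemma~\ref{le:ThomT'}. Writing $T^{(i)}$ and $V^{(i)}$ for the corresponding telescope and combinatorial sets, Lemma~\ref{le:TsubV} at each consecutive level gives the chain
$$T^{(4)}\subset V^{(3)}\subset T^{(2)}\subset V^{(1)},$$
and Lemma~\ref{le:ThomT'} applied across two levels yields homotopy equivalences $T^{(4)}\hookrightarrow T^{(2)}$ and $V^{(3)}\hookrightarrow V^{(1)}$.

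The key observation is that $T^{(4)}\hookrightarrow T^{(2)}$ factors through $V^{(3)}$, and $V^{(3)}\hookrightarrow V^{(1)}$ factors through $T^{(2)}$, with the inclusion $V^{(3)}\hookrightarrow T^{(2)}$ appearing as one of the two factors in each case. Since both compositions are isomorphisms on every $\pi_k$ and every $H_k$, a standard 2-out-of-3 argument then forces the common middle arrow $V^{(3)}\hookrightarrow T^{(2)}$ to be both injective (from the $T$-chain) and surjective (from the $V$-chain) on each $\pi_k$ and $H_k$, hence an isomorphism. Thus $V^{(3)}\hookrightarrow T^{(2)}$ is a weak homotopy equivalence; as both sets are definable and therefore triangulable, they are connected CW-complexes, and Theorem~\ref{th:Whitehead1} upgrades this to a genuine homotopy equivalence $V^{(3)}\simeq T^{(2)}$. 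Composing with $V\simeq V^{(3)}$ and $T^{(2)}\simeq T$ from Lemma~\ref{le:ThomT'} gives $T\simeq V$.

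I expect the main obstacle to be the parameter bookkeeping: one must produce a single tower of four tuples whose pairwise relations simultaneously fit into the hypotheses of Lemma~\ref{le:TsubV} (for the interleaved inclusions $T^{(i+1)}\subset V^{(i)}$ and $V^{(i+1)}\subset T^{(i)}$) and of Lemma~\ref{le:ThomT'} (for the homotopy equivalences $T^{(i+2)}\hookrightarrow T^{(i)}$ and $V^{(i+2)}\hookrightarrow V^{(i)}$). Once such a tower is produced, the rest is a direct diagram chase followed by one appeal to Whitehead's theorem.
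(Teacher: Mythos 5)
Your proposal follows essentially the same route as the paper: build a four-level chain $T^{(4)}\hookrightarrow V^{(3)}\hookrightarrow T^{(2)}\hookrightarrow V^{(1)}$ from Lemma~\ref{le:TsubV}, invoke Lemma~\ref{le:ThomT'} to make the two composites $T^{(4)}\hookrightarrow T^{(2)}$ and $V^{(3)}\hookrightarrow V^{(1)}$ homotopy equivalences, deduce that the middle arrow is an isomorphism on all $\pi_k$ and $H_k$, and finish with Whitehead. One small slip: since $V^{(3)}\to T^{(2)}$ is the \emph{second} factor of the $T$-chain composite and the \emph{first} factor of the $V$-chain composite, surjectivity comes from the $T$-chain and injectivity from the $V$-chain, the reverse of what you wrote; the conclusion is unaffected.
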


\begin{proof}
Consider four sequences
$(\eps^{(j)}, \delta^{(j)}):=(\eps^{(j)}_{0}, \delta^{j}_{0}, \ldots ,
\eps^{(j)}_{m}, \delta^{(j)}_{m})$.
Let $T(\eps^{(j)}, \delta^{(j)})$ (respectively, $V(\eps^{(j)}, \delta^{(j)})$)
be the set defined as in Definition~\ref{def:telescope} (resp., Definition~\ref{def:V})
replacing all $\delta_i,\ \eps_i$ by $\delta^{(j)}_{i},\ \eps^{(j)}_{i}$.

Due to Lemmas~\ref{le:TsubV}, the following chain of inclusions holds
$$T(\eps^{(1)}, \delta^{(1)}) \stackrel{p}{\hookrightarrow} V(\eps^{(2)}, \delta^{(2)})
\stackrel{q}{\hookrightarrow} T(\eps^{(3)}, \delta^{(3)}) \stackrel{r}{\hookrightarrow}
V(\eps^{(4)}, \delta^{(4)}),$$
for
$$0< \eps^{(j)}_{0} \ll \delta^{(j)}_{0}\ll \cdots \ll\eps^{(j)}_{m} \ll
\delta^{(j)}_{m} \ll 1,$$
where
$$\delta^{(j-1)}_{i-1} \ll \eps^{(j-1)}_i \ll \eps^{(j)}_i \ll \delta^{(j)}_i \ll
\delta^{(j-1)}_i$$
for all $i=1, \ldots ,m$, $j=2,3,4$.

According to Lemma~\ref{le:ThomT'}, $q \circ p$ and $r \circ q$ are homotopy equivalences.
Passing to induced homomorphisms of homotopy groups, we have that $(q \circ p)_\ast=
q_\ast \circ p_\ast$ is an isomorphism, hence $q_\ast$ is epimorphism.
Similarly, since $(r \circ q)_\ast=r_\ast \circ q_\ast$ is an isomorphism, $q_\ast$ is a
monomorphism.
It follows that $q_\ast$ is an isomorphism, therefore $T \simeq V$ by Whitehead Theorem
on weak homotopy equivalence (Theorem~\ref{th:Whitehead1}).
\end{proof}

Theorem~\ref{th:main}(ii) immediately follows from Theorems~\ref{th:TV} and \ref{th:VS}.

\section{Upper bounds on Betti numbers}

The method described in this section can be applied to obtain upper bounds on Betti
numbers for sets defined by Boolean formulae with functions from various classes
which admit a natural measure of ``description complexity'' and a suitable version of
``Bezout Theorem'', most notably for semialgebraic and semi- and sub-Pfaffian sets
(see, e.g., \cite{GV04}).
We give detailed proofs for the semialgebraic case.
The proofs can be extended to the Pfaffian case straightforwardly.

\begin{definition}
Let $f, g, h:\> \N^\ell \to \N $ be three functions, $n \in \N$.
The expression $f \le O(g)^n$ means: there exists $c \in \N$
such that $f \le (cg)^n$ everywhere on $\N^\ell$.
The expression $f \le g^{O(h)}$ means: there exists $c \in \N$ such that
$f \le g^{c h}$ everywhere on $\N^\ell$.
\end{definition}

\subsection{Semialgebraic sets defined by quantifier-free formulae}
Consider the constructible case with $S= \{ \x|\> {\mathcal F}(\x)\} \subset \Real^n$, where
${\mathcal F}$ is a Boolean combination
of polynomial equations and inequalities of the kind $h(\x)=0$ or $h(\x)>0$,
$h \in \Real[x_1, \ldots ,x_n]$.
Suppose that the number of different polynomials $h$ is $s$ and their degrees do not exceed $d$.
The following upper bounds on the total Betti number ${\rm b}(S)$ of the set $S$ originate from
the classic works of \cite{O, PO, Milnor, Thom}.
Their proofs can be found in \cite{BPR}.

\begin{itemize}
\item[(i)]
If ${\mathcal F}$ is a conjunction of any number equations, then
${\rm b}(S) \le d(2d-1)^{n-1}$.
\item[(ii)]
If ${\mathcal F}$ is a conjunction of $s$ {\em non-strict} inequalities, then
${\rm b}(S) \le (sd+1)^n$.
\item[(iii)]
If ${\mathcal F}$ is a conjunction of $s$ equations and {\em strict} inequalities, then
${\rm b}(S) \le O(sd)^n$.
\end{itemize}

The following statement applies to more general semialgebraic sets.

\begin{theorem}[\cite{Basu99}, Th. 1;\ \cite{BPR}, Th. 7.38]\label{th:alg_bound0}
If ${\mathcal F}$ is a monotone Boolean combination (i.e., exclusively connectives
$\land,\ \lor$ are used, no negations) of only strict or only
non-strict inequalities, then ${\rm b}(S) \le O(sd)^n$.
\end{theorem}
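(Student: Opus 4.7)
The plan is to reduce to the case where $S$ is compact and $\mathcal F$ is a monotone Boolean combination of non-strict inequalities, and then to apply a Mayer--Vietoris bound via the telescope approximation of Theorem~\ref{th:main}.

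First I would achieve compactness by intersecting $S$ with a closed ball of large radius $R$, adding the single non-strict inequality $R^2 - |\x|^2 \ge 0$ to $\mathcal F$; this preserves monotonicity and increases $s$ only by one. For the strict case, replace each atom $P_i > 0$ by $P_i \ge \eta$ to obtain a compact set $S^+_\eta \subset S$ with $S = \bigcup_{\eta > 0} S^+_\eta$; by semialgebraic (Hardt) triviality applied to the family $\{S^+_\eta\}_{\eta>0}$ at $\eta = 0^+$, the inclusion $S^+_\eta \hookrightarrow S$ is a homotopy equivalence for all sufficiently small $\eta > 0$. After these reductions, $S$ is compact and defined by a monotone non-strict combination of at most $s+1$ polynomials of degree $\le d$.

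Next, I would apply Theorem~\ref{th:main}(ii) with $m = \dim S \le n$ to obtain a compact approximation $T = \bigcup_{i=0}^m S_{\delta_i, \eps_i} \simeq S$, so that ${\rm b}_k(S) = {\rm b}_k(T)$ for all $k$. By the Mayer--Vietoris spectral sequence for the closed cover $\{S_{\delta_i,\eps_i}\}_{i=0}^m$ of $T$, the total Betti number ${\rm b}(T)$ is bounded by the sum of ${\rm b}(\bigcap_{i \in I} S_{\delta_i,\eps_i})$ over non-empty $I \subseteq \{0,\ldots,m\}$; there are at most $2^{n+1}$ such subsets. Each intersection is a compact semialgebraic set defined by a monotone non-strict Boolean combination involving only the original $s$ polynomials $P_j$, shifted by additive constants $\pm \delta_i, \pm \eps_i$. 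Applying the classical bound~(ii) above to each intersection yields ${\rm b}(\bigcap_{i \in I} S_{\delta_i,\eps_i}) \le (sd+1)^n$, and summing gives ${\rm b}(S) \le 2^{n+1}(sd+1)^n = O(sd)^n$.

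The main obstacle I anticipate is the step showing that the intersection of several sets $S_{\delta_i, \eps_i}$ can be expressed as a monotone non-strict Boolean combination of essentially $s$ rather than $O(ns)$ shifted polynomials. The sign-set construction of $S_{\delta_i,\eps_i}$ introduces distinct shifted copies $P_j \ge \delta_i$ and $|P_j| \le \eps_i$ for each $i \in I$, so naively the number of atoms in an intersection scales linearly with $|I| \le n+1$, which would produce the cruder bound $O(nsd)^n$. Removing this extra factor of $n$ --- to obtain the tight $O(sd)^n$ bound --- requires a careful analysis of how the sign-set decompositions of the different $S_{\delta_i,\eps_i}$ interact on $\Real^n$, using the monotone structure of $\mathcal F$ to collapse the intersection to a monotone formula in the original $s$ atoms up to perturbation; this is the same type of combinatorial refinement that distinguishes the bounds of this paper from the squared-$s$ bounds of \cite{GV05}.
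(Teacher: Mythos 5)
This theorem is not proved in the paper at all: it is imported as a known result, with the citations \cite{Basu99} (Theorem~1) and \cite{BPR} (Theorem~7.38) standing in place of a proof. It then serves as an ingredient in the proofs of Theorems~\ref{th:alg_bound1}, \ref{th:alg_bound2}, etc. So there is no ``paper's own proof'' to compare against; what can be assessed is whether your argument would constitute a valid independent proof.

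It would not, because the Mayer--Vietoris step is circular at its final stage. After covering $T$ by the closed sets $S_{\delta_i,\eps_i}$ and reducing to the intersections $\bigcap_{i\in I}S_{\delta_i,\eps_i}$, you assert that each intersection can be bounded by the classical estimate~(ii), i.e.\ the bound for a \emph{conjunction} of non-strict inequalities. But an intersection $\bigcap_{i\in I}S_{\delta_i,\eps_i}$ is not a conjunction: each $S_{\delta_i,\eps_i}$ is, by construction, a \emph{union} over the (potentially exponentially many) sign sets of $\{h_1,\ldots,h_s\}$ of basic closed sets, so the intersection is again a union of basic closed sets, i.e.\ a monotone non-strict Boolean combination. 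Bounding the Betti numbers of such a set in terms of $s$, $d$, $n$ alone --- independently of the number of sign sets in the union --- is \emph{precisely} the content of Theorem~\ref{th:alg_bound0}. So at the crucial step you are invoking the statement you are trying to prove. The obstacle you flag (controlling the number of shifted copies of each $h_j$, i.e.\ the factor of $|I|$) is real but secondary; even if it were resolved you would still be facing a union of conjunctions, for which the classical bound~(ii) simply does not apply.

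The preliminary reductions (compactification by adding $R^2-|\x|^2\ge0$; passing from strict to non-strict via $P>0\rightsquigarrow P\ge\eta$ and Hardt triviality) are standard and fine. But the core of Basu's argument in \cite{Basu99} is a different inductive Mayer--Vietoris scheme --- roughly, expressing a monotone union of basic closed sets through an inclusion--exclusion over a controlled family of auxiliary closed sets whose intersections \emph{are} conjunctions of controlled size --- and that combinatorial device is what actually yields $O(sd)^n$. The telescope construction $T(S)$ of the present paper is designed to reduce an \emph{arbitrary} Boolean combination (with negations, i.e.\ sign conditions of mixed open/closed type) to the compact, monotone, non-strict case; it assumes Theorem~\ref{th:alg_bound0} as a black box at the last step rather than reproducing it. If you wanted to prove Theorem~\ref{th:alg_bound0} itself, you would need to follow \cite{Basu99} or \cite{BPR}, Chapter~7, not the machinery of Section~\ref{sec:main}.
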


In \cite{GV05}, Th. 1 the authors proved the bound ${\rm b}(S) \le O(s^2d)^n$ for an
arbitrary Boolean formula ${\mathcal F}$.
Theorem~\ref{th:main} implies the following refinement of this bound.

\begin{theorem}\label{th:alg_bound1}
Let $\nu:= \min \{ k+1, n-k, s \}$.
Then the $k$-th Betti number
$${\rm b}_k(S) \le O(\nu sd)^n.$$
\end{theorem}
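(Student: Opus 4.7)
The plan is to combine Theorem~\ref{th:main}(ii) (constructible case) with a Mayer--Vietoris inequality and classical Alexander duality, and to take the minimum of three bounds.

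For the bound $b_k(S)\le O((k+1)sd)^n$, I would invoke Theorem~\ref{th:main}(ii) with $m=k+1$: for a sufficient sequence (\ref{eq:ed}) one has $b_k(S)=b_k(T)$, where $T=\bigcup_{i=0}^{k+1} S_{\delta_i,\eps_i}$ is a closed cover of the compact $T$ by $k+2$ semialgebraic pieces. The Mayer--Vietoris inequality for a closed cover yields
$$
b_k(T)\le \sum_{p+q=k}\ \sum_{\substack{I\subset\{0,\ldots,k+1\}\\ |I|=p+1}} b_q(S_I),\qquad S_I:=\bigcap_{\ell\in I}S_{\delta_\ell,\eps_\ell}.
$$
By its construction from sign sets, each $S_{\delta_\ell,\eps_\ell}$ is a monotone Boolean combination (only $\wedge,\vee$) of non-strict inequalities in the $4s$ polynomials $\{h_i\pm\delta_\ell,\ h_i\pm\eps_\ell\}$, all of degree $\le d$. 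Intersection of monotone formulas is again monotone, so $S_I$ is defined by a monotone Boolean combination of at most $4s|I|\le 4s(k+2)$ non-strict polynomial inequalities of degree $\le d$. Theorem~\ref{th:alg_bound0} then gives $b_q(S_I)\le O((k+1)sd)^n$. The combinatorial factor $\sum_p\binom{k+2}{p+1}\le 2^{k+2}\le 2^{n+1}$ is absorbed into the $O(\cdot)^n$ constant, yielding $b_k(S)\le O((k+1)sd)^n$.

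For the bound $b_k(S)\le O((n-k)sd)^n$, I would apply classical Alexander duality in the one-point compactification $G=S^n$ of $\Real^n$: the complement $S^c$ is defined by $\neg\mathcal F$, a Boolean combination of the same $s$ polynomials of degree $\le d$. Applying the previous step to $S^c$ at degree $k'=n-k-1$ gives $b_{n-k-1}(S^c)\le O((n-k)sd)^n$, and Alexander duality yields $b_k(S)\le O((n-k)sd)^n$. Together with the prior bound $b_k(S)\le O(s^2 d)^n$ from \cite{GV05}, I would conclude $b_k(S)\le O(\min\{k+1,n-k,s\}\cdot sd)^n = O(\nu sd)^n$.

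The main obstacle is the rigorous use of Alexander duality, since $S$ need not be closed in $\Real^n$ and therefore $S^n\setminus S$ need not be open. I would circumvent this by first replacing $S$ by the compact homotopy-equivalent $T$ provided by Theorem~\ref{th:main}(ii) with $m\ge\dim S$, and analogously replacing $S^c$ by its homotopy-equivalent compact telescope, so that classical Alexander duality applies verbatim to a compact pair in $S^n$. A secondary point is keeping the Mayer--Vietoris multiplicity $2^{k+2}$ inside the $O(\cdot)^n$ notation, which is automatic since $2^{n+1}=O(1)^n$.
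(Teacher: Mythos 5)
Your high-level plan---bound $b_k(S)$ by $b_k(T)$ for a compact telescope $T$, bound the Betti numbers of $T$ via Theorem~\ref{th:alg_bound0}, use Alexander duality to obtain the $(n-k)$ factor, and combine with $O(s^2d)^n$ from \cite{GV05}---is the same as the paper's. But two steps diverge, one harmlessly and one fatally.

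For the $(k+1)$ bound, you take $m=k+1$, invoke Theorem~\ref{th:main}(ii) to get $b_k(S)=b_k(T)$, and run a Mayer--Vietoris argument over the $k+2$ closed pieces $S_{\delta_i,\eps_i}$. The paper instead takes $m=k$, uses Theorem~\ref{th:main}(i) (an epimorphism $H_k(T)\to H_k(S)$, which already gives $b_k(S)\le b_k(T)$), and applies Theorem~\ref{th:alg_bound0} \emph{directly} to $T(S)$: each $S_{\delta_i,\eps_i}$ is a monotone Boolean combination ($\land,\lor$ only) of non-strict inequalities, hence so is their union $T(S)$, which is thus itself covered by the hypotheses of Theorem~\ref{th:alg_bound0} with at most $4(k+1)s+O(k)$ polynomials of degree $\le d$. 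Your Mayer--Vietoris decomposition is correct, but the combinatorial factor and the separate treatment of each intersection $S_I$ are unnecessary. You also omit two small but real reductions the paper performs: from disconnected $S$ to connected $S$ via the component bijection of Lemma~\ref{le:components}, and the separate treatment of $k=0$ (Theorem~\ref{th:main} is stated for $1\le k\le m$ with $m>0$, so it does not directly speak to $k=0$).

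The $(n-k)$ bound is where your proposal has a genuine gap. Alexander duality on $S^n$ relates a \emph{compact} set $K\subset S^n$ to the complement $S^n\setminus K$ of \emph{that same} $K$; there is no ``Alexander duality for a compact pair.'' Replacing $S$ by a compact homotopy-equivalent $T$ and, independently, $S^c$ by a compact homotopy-equivalent $T'$ does not make $T'$ complementary to $T$, so no duality relates $H_k(T)$ to $H_{n-k-1}(T')$. Indeed the inequality you are implicitly using, $b_k(S)\le b_{n-k-1}(S^c)$, is simply false for non-closed $S$: in $\Real^2$ take $S=\{y\ne 0\}$, with reduced $b_0(S)=1$, while $S^c=\{y=0\}$ is contractible, so $b_{n-k-1}(S^c)=b_1(S^c)=0$. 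The paper's route avoids this entirely: it applies Alexander duality to the compact $T(S)$ inside the one-point compactification $G$, obtaining $b_k(T(S))=b_{n-k-1}(G\setminus T(S))$, and then bounds $b_{n-k-1}(G\setminus T(S))$ by Theorem~\ref{th:alg_bound0}, since $G\setminus T(S)$ is defined by a monotone Boolean combination of \emph{strict} inequalities (De Morgan flips $\land\leftrightarrow\lor$ and non-strict to strict). To repair your argument you would need to restructure this step along those lines rather than dualize $S$ against $S^c$.
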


\begin{proof}
Assume first that $k>0$.
For $m=k$ construct $T(S)$ in the compactification of $\Real^n$,
as described in Section~\ref{sec:main}.
$T(S)$ is a compact set defined by a Boolean formula with $4(k+1)s$ polynomials in
$\Real[x_1, \ldots ,x_n]$ of the kind $h+ \delta_i$, $h- \delta_i$, $h+ \eps_i$
or $h- \eps_i$, $0 \le i \le k$, having degrees at most $d$.
According to Lemma~\ref{le:components}, there is a bijection $C$ from the set ${\bf T}$ of
all connected components of $T(S)$ to the set ${\bf S}$ of all connected components of $S$
such that $C^{-1} (S')=T(S')$ for every $S' \in {\bf S}$.
By Theorem~\ref{th:main} (i), ${\rm b}_k (S') \le {\rm b}_k (T(S'))$.
I follows that
$${\rm b}_k(S)= \sum_{S' \in {\bf S}}{\rm b}_k(S') \le \sum_{S' \in {\bf S}}{\rm b}_k(T(S'))=
{\rm b}_k(T(S)).$$
Then, applying the bound from Theorem~\ref{th:alg_bound0} to $T(S)$,
\begin{equation}\label{eq:betti1}
{\rm b}_k(S) \le {\rm b}_k(T(S)) \le O((k+1)sd)^n.
\end{equation}

On the other hand, since $T(S)$ is compact, ${\rm b}_k(T(S)) =
{\rm b}_{n-k-1}(\Real^n \setminus T(S))$ by Alexander's duality.
The semialgebraic set $\Real^n \setminus T(S)$ is defined by a monotone Boolean combination
of only strict inequalities, hence, due to Theorem~\ref{th:alg_bound0},
\begin{equation}\label{eq:betti2}
{\rm b}_k(S) \le {\rm b}_{n-k-1}(\Real^n \setminus T(S)) \le O((n-k)sd)^n.
\end{equation}
The theorem now follows from (\ref{eq:betti1}), (\ref{eq:betti2}) and the
bound ${\rm b}(S) \le O(s^2d)^n$ from \cite{GV05}.

In the case $k=0$, ${\rm b}_0 (S) \le {\rm b}_0 (T(S))$ since the map $C$
is surjective, hence by Theorem~\ref{th:alg_bound0},
$$
{\rm b}_0(S) \le {\rm b}_0(T(S)) \le O(sd)^n.
$$
\end{proof}

\subsection{Projections of semialgebraic sets}

Let $\rho:\> \Real^{n+r} \to \Real^n$ be the projection map, and
$S= \{ (\x, \y)|\> {\mathcal F}(\x, \y)\} \subset \Real^{n+r}$ be a semialgebraic set,
where ${\mathcal F}$ is a Boolean combination
of polynomial equations and inequalities of the kind $h(\x, \y)=0$ or $h(\x, \y)>0$,
$h \in \Real[x_1, \ldots ,x_{n}, y_1, \ldots ,y_r]$.
Suppose that the number of different polynomials $h$ is $s$ and their degrees do not exceed $d$.

Effective quantifier elimination algorithm (\cite{BPR}, Ch. 14) produces a Boolean
combination ${\mathcal F}_\rho$ of polynomial equations and inequalities, with polynomials
in $\Real[x_1, \ldots ,x_n]$, defining the projection $\rho (S)$.
The number of different polynomials in ${\mathcal F}_\rho$ is $(sd)^{O(nr)}$, and their
degrees are bounded by $d^{O(r)}$.
Then Theorem~\ref{th:alg_bound1} (or Theorem~1 in \cite{GV05}) implies that
\begin{equation}\label{eq:elim}
{\rm b}_k (\rho (S)) \le (sd)^{O(n^2r)}
\end{equation}
for any $k \ge 0$.
We now improve this bound as follows.

\begin{theorem}\label{th:alg_bound2}
The $k$-th Betti number of $\rho (S)$ satisfies the inequality
$${\rm b}_k (\rho (S)) \le \sum_{0 \le p \le k} O((p+1)(k+1)sd)^{n+(p+1)r}
\le ((k+1)sd)^{O(n+kr)}.$$
\end{theorem}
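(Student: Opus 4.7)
The plan is to combine the compact approximation of Theorem~\ref{th:main} with the fibered-product spectral sequence of Corollary~\ref{cor:spectral}, applied to the restriction of $\rho$ to a compact approximant of $S$. The point is that once $S$ has been replaced by a compact $T(S)$ whose defining formula is monotone and non-strict, the projection $\rho|_{T(S)}$ is automatically closed, so the spectral sequence of \cite{GVZ} applies, and the fibered products can be bounded by the Basu-type estimate of Theorem~\ref{th:alg_bound0}.

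First, I would construct the compact approximant $T(S)$ as in Section~\ref{sec:main} with $m=k$. As already noted in the proof of Theorem~\ref{th:alg_bound1}, $T(S)$ is compact and is defined by a monotone Boolean combination (only $\land,\ \lor$) of non-strict inequalities involving at most $4(k+1)s$ polynomials of degree at most $d$ in $\Real[x_1,\ldots,x_n,y_1,\ldots,y_r]$. Since projections are open maps, Lemma~\ref{le:map_F} implies that $\rho(S)$ is represented (in a suitable compactification of $\Real^n$) by the families $\{\rho(S_\delta)\}$ and $\{\rho(S_{\delta,\eps})\}$, and therefore $T(\rho(S))=\rho(T(S))$ for a parameter sequence satisfying (\ref{eq:ed}). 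Theorem~\ref{th:main}(i) applied to $\rho(S)$ then gives $\mathrm{b}_k(\rho(S))\le \mathrm{b}_k(\rho(T(S)))$.

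Next, since $T(S)$ is compact, the restriction $\rho|_{T(S)}\colon T(S)\to\rho(T(S))$ is a closed continuous surjective definable map, so Corollary~\ref{cor:spectral} yields
$$\mathrm{b}_k(\rho(T(S)))\le\sum_{p+q=k}\mathrm{b}_q(W_p),\qquad W_p=\underbrace{T(S)\times_{\rho(T(S))}\cdots\times_{\rho(T(S))}T(S)}_{p+1\text{ times}}.$$
Writing $T(S)=\{(\x,\y):\Phi(\x,\y)\}$, the set $W_p$ is cut out in $\Real^{n+(p+1)r}$ by $\Phi(\x,\y_0)\land\cdots\land\Phi(\x,\y_p)$. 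This is again a monotone Boolean combination of non-strict inequalities, now involving $4(p+1)(k+1)s$ polynomials of degree at most $d$ in $n+(p+1)r$ variables, so Theorem~\ref{th:alg_bound0} gives
$$\mathrm{b}_q(W_p)\le O((p+1)(k+1)sd)^{\,n+(p+1)r}.$$

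Summing over $p+q=k$, $0\le p\le k$, produces the first estimate of the theorem; the summand is non-decreasing in $p$, so up to a factor $k+1$ the sum is bounded by its $p=k$ term $O((k+1)^2sd)^{n+(k+1)r}=((k+1)sd)^{O(n+kr)}$, which is the second estimate. The main technical point to verify carefully is that $\rho(S)$ really is represented by $\{\rho(S_\delta)\}$ and $\{\rho(S_{\delta,\eps})\}$ in the chosen compactification (openness of $\rho$ plus the one-point compactification handling needed in the constructible case of Section~\ref{sec:main}); the other subtlety is the bookkeeping that the defining formula of $W_p$ preserves the \emph{monotone non-strict} structure required for Theorem~\ref{th:alg_bound0}, which follows because a conjunction of $p+1$ copies of a monotone non-strict formula is itself monotone and non-strict.
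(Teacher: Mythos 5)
Your proposal follows essentially the same route as the paper's own proof: approximate $\rho(S)$ by $\rho(T(S))=T(\rho(S))$ via Lemma~\ref{le:map_F} and Theorem~\ref{th:main}(i), then apply the spectral sequence of Corollary~\ref{cor:spectral} to the closed map $\rho|_{T(S)}$, bound each fibred product $W_p$ by Theorem~\ref{th:alg_bound0}, and sum. The only differences are small explicit checks you add (that $\rho|_{T(S)}$ is closed because $T(S)$ is compact, and that the defining formula of $W_p$ remains a monotone conjunction of non-strict inequalities) which the paper leaves implicit; these are correct and do not change the argument.
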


\begin{proof}
For $k=0$ the bound immediately follows from Theorem~\ref{th:alg_bound1}, so assume that $k>0$.
The set $S$ is represented by families $\{ S_\delta \}_\delta$,
$\{ S_{\delta , \eps} \}_{\delta , \eps}$ in the compactification of $\Real^{n+r}$ as
described in Section~\ref{sec:main}.
According to Lemma~\ref{le:map_F}, the projection $\rho (S)$ is represented by
families $\{ \rho(S_\delta) \}_\delta$,
$\{ \rho(S_{\delta , \eps}) \}_{\delta , \eps}$ in the compactification of $\Real^n$.
Fix $m=k$, then the set $T(\rho(S))=\rho (T(S))$ is defined.
According to Corollary~\ref{cor:spectral},
$${\rm b}_{k} (\rho (T(S))) \le \sum_{p+q=k} {\rm b}_{q}(W_{p}),$$
where
$$W_{p}=\underbrace{T(S) \times_{\rho (T(S))} \cdots
\times_{ \rho (T(S))} T(S)}_{p+1\> \>{\rm times}}.$$
The fibred product $W_{p} \subset \Real^{n+(p+1)r}$ is definable by a Boolean formula with
$$4(p+1)(k+1)s$$
polynomials of degrees not exceeding $d$.
Hence, by Theorem~\ref{th:alg_bound0},
$${\rm b}_{q}(W_{p}) \le O((p+1)(k+1)sd)^{n+(p+1)r}.$$
It follows that
\begin{equation}\label{eq:proj}
{\rm b}_{k} (T (\rho(S))) \le \sum_{0 \le p \le k} O((p+1)(k+1)sd)^{n+(p+1)r}
\le ((k+1)sd)^{O(n+kr)}.
\end{equation}
Finally, by Theorem~\ref{th:main} (i), ${\rm b}_k (\rho (S)) \le {\rm b}_{k} (T (\rho(S)))$,
which, in conjunction with (\ref{eq:proj}), completes the proof.
\end{proof}

\subsection{Semi- and sub-Pfaffian sets}

Necessary definitions regarding semi-Pfaffian and sub-Pfaffian sets can be found
in \cite{GV04, GV01} (see also \cite{Khov}).

Let $S= \{ \x|\> {\mathcal F}(\x)\} \subset (0,1)^n$ be a semi-Pfaffian set, where
${\mathcal F}$
is a Boolean combination of equations and inequalities with $s$ different Pfaffian functions
(here and in the sequel $(0,1)$ can be replaced by any, bounded or unbounded, interval).
Assume that all functions are defined in $(0,1)^n$, have a common Pfaffian
chain of order $\ell$, and degree $(\alpha, \beta)$.
A straightforward generalization of Theorem~\ref{th:alg_bound0} gives the following upper bound.

\begin{theorem}[\cite{Zell}, Th. 1;\ \cite{GV04}, Th. 3.4]\label{th:pfaff_bound0}
If ${\mathcal F}$ is a monotone Boolean combination of only strict or only
non-strict inequalities such that $\overline S \subset (0,1)^n$, then
$${\rm b}(S) \le s^n 2^{\ell (\ell -1)/2}O(n \beta + \min \{ n, \ell \} \alpha)^{n+ \ell}.$$
\end{theorem}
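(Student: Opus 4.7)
The plan is to mirror the proof of the semialgebraic analog (Theorem~\ref{th:alg_bound0}) but to replace the Oleinik--Petrovskii--Milnor--Thom bound on basic algebraic sets at the ``atomic'' step with its Khovanskii--Pfaffian counterpart. Since $\overline S \subset (0,1)^n$, I would first reduce the strict case to the non-strict one: replace each atom $h>0$ by $h \ge \eta$ for $\eta>0$ small enough that $S$ is unchanged up to homotopy. The number of Pfaffian functions stays $s$, the common chain of length $\ell$ is preserved, and the degrees $(\alpha,\beta)$ are affected only by harmless additive constants, so the bound will not be weakened in the reduction.

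Next I would run the Basu--Mayer--Vietoris argument in the Pfaffian setting. A monotone non-strict Boolean combination of the $s$ atoms realizes $S$ as a union of the basic closed sets $A_J := \{h_j \ge 0 : j \in J\}$ for certain $J \subset \{1,\ldots,s\}$. Basu's inclusion--exclusion / spectral sequence argument then gives
\[
{\rm b}(S) \le \sum_{t \le n+1}\ \sum_{|J|=t} {\rm b}(A_J),
\]
and there are at most $O(s^n)$ index sets $J$ of size at most $n+1$. Each $A_J$ is a basic closed semi-Pfaffian set defined by at most $n+1$ non-strict inequalities, all involving Pfaffian functions sharing the same chain data $(\ell,\alpha,\beta)$.

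The central and hardest ingredient is the atomic Pfaffian Milnor-type bound: for a basic closed semi-Pfaffian set $A$ defined by $O(n)$ non-strict inequalities with the given chain data, the total Betti number satisfies
\[
{\rm b}(A) \le 2^{\ell(\ell-1)/2}\, O(n\beta + \min\{n,\ell\}\alpha)^{n+\ell}.
\]
To prove this, one approximates $A$ by a smooth compact regular sublevel set, bounds ${\rm b}(A)$ above by the number of non-degenerate critical points of a generic Morse function, and estimates that number via the Khovanskii--Rolle technique applied to a suitable intersection of Pfaffian hypersurfaces. The factor $2^{\ell(\ell-1)/2}$ is the classical combinatorial factor from iterated Rolle estimates along the Pfaffian chain, and the exponent $n+\ell$ reflects the virtualization of the $\ell$ chain functions as $\ell$ additional algebraic coordinates, so that a Bezout-type count can be applied in dimension $n+\ell$.

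Combining the Basu-type outer estimate with this inner Khovanskii-type estimate gives
\[
{\rm b}(S) \le s^n \cdot 2^{\ell(\ell-1)/2}\, O(n\beta + \min\{n,\ell\}\alpha)^{n+\ell},
\]
which is the required bound. The main obstacle is the Pfaffian Milnor estimate itself, whose proof requires careful Morse-theoretic and Khovanskii--Rolle input; once it is granted, the remainder of the argument is routine bookkeeping of the parameters through the reduction from strict to non-strict inequalities and through the Basu summation.
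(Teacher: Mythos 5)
This theorem is stated with the attributions $[$Zell, Th.~1; GV04, Th.~3.4$]$ and the paper supplies no proof of its own, so there is no in-paper argument to compare against; what you have written is a sketch of the known external proof. Your road map is essentially the one Zell follows: reduce the strict case to the non-strict one by a small perturbation (legitimate because $\overline S\subset(0,1)^n$ keeps everything away from the boundary), run Basu's Mayer--Vietoris machinery to reduce to basic closed semi-Pfaffian sets, and then invoke Khovanskii's Morse-theoretic bound, whose $2^{\ell(\ell-1)/2}$ factor and $n+\ell$ exponent come from the iterated Rolle argument and the chain-variable embedding exactly as you say.

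Two cautions. First, your intermediate display
\[
{\rm b}(S)\;\le\;\sum_{t\le n+1}\ \sum_{|J|=t}{\rm b}(A_J),\qquad A_J:=\{h_j\ge 0:\ j\in J\},
\]
is not literally Basu's reduction. A general monotone Boolean combination of non-strict inequalities is not a union of the sets $A_J$; Basu's argument runs a Mayer--Vietoris spectral sequence over a more structured collection of closed sets built from sign conditions of levels up to $n$, and the $s^n$ factor arises as $\sum_{j\le n}\binom{s}{j}$ after this bookkeeping. The corrected count gives $s^n$, not $s^{n+1}$, matching the stated bound. Second, the entire weight of the argument rests on the atomic Khovanskii--Rolle estimate for a basic closed set with a fixed Pfaffian chain; you name it correctly and indicate the Morse-theoretic and Bezout-in-dimension-$(n+\ell)$ input, but you do not prove it, which is appropriate since it is itself a substantial cited result. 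With those caveats, the sketch is a faithful outline of the standard proof of the theorem that the paper is citing rather than proving.
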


In conjunction with Theorem~\ref{th:main} this implies the following bound for the set $S$
defined by an {\em arbitrary} Boolean formula ${\mathcal F}$.

\begin{theorem}\label{th:pfaff_bound1}
Let $\nu:= \min \{ k+1, n-k, s \}$.
Then the $k$-th Betti number
$${\rm b}_k(S) \le (\nu s)^n 2^{\ell (\ell -1)/2}O(n \beta +
\min \{ n, \ell \} \alpha)^{n+ \ell}.$$
\end{theorem}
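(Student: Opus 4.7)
The plan is to mimic the proof of Theorem~\ref{th:alg_bound1} step by step, substituting the Pfaffian bound Theorem~\ref{th:pfaff_bound0} in place of Theorem~\ref{th:alg_bound0} wherever the latter was invoked. First I would assume $k>0$ (treating $k=0$ at the end) and set $m=k$, then construct $T(S)$ in a suitable compactification of $(0,1)^n$ using the procedure described in Section~\ref{sec:main}. Each of the $s$ Pfaffian functions $h$ occurring in $\mathcal F$ contributes at most $4(k+1)$ functions of the form $h\pm\delta_i$, $h\pm\eps_i$ ($0\le i\le k$) to the Boolean formula defining $T(S)$. These have the same Pfaffian chain (of order $\ell$) and the same degree $(\alpha,\beta)$ as the original $h$'s; the total number of different Pfaffian functions is at most $4(k+1)s$. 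Crucially, $T(S)$ is compact and is defined by a monotone Boolean combination of only \emph{non-strict} inequalities, so Theorem~\ref{th:pfaff_bound0} applies.

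Next, by Lemma~\ref{le:components} together with Theorem~\ref{th:main}(i), summing over connected components of $S$ yields
$${\rm b}_k(S)=\sum_{S'\in\mathbf S}{\rm b}_k(S')\le\sum_{S'\in\mathbf S}{\rm b}_k(T(S'))={\rm b}_k(T(S)).$$
Applying Theorem~\ref{th:pfaff_bound0} to $T(S)$ with $4(k+1)s$ functions then produces the bound
$${\rm b}_k(S)\le((k+1)s)^n\,2^{\ell(\ell-1)/2}\,O(n\beta+\min\{n,\ell\}\alpha)^{n+\ell}.$$
For the factor $(n-k)$ I would use Alexander duality: since $T(S)$ is compact in $\Real^n$, ${\rm b}_k(T(S))={\rm b}_{n-k-1}(\Real^n\setminus T(S))$, and the complement is defined by a monotone Boolean combination of only \emph{strict} inequalities with the same $O((k+1)s)$ functions; applying Theorem~\ref{th:pfaff_bound0} to the complement yields the analogous bound with $n-k$ in place of $k+1$. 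Finally, the third alternative in the minimum, namely the factor $s$, follows from the Pfaffian analogue of the bound $O(s^2d)^n$ of \cite{GV05}, whose transfer the authors have already announced to be straightforward.

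For the case $k=0$, since the map $C$ of Lemma~\ref{le:components} is surjective (without needing the bijectivity from $m>0$), we have ${\rm b}_0(S)\le{\rm b}_0(T(S))$, and a direct application of Theorem~\ref{th:pfaff_bound0} to $T(S)$ closes that case within the stated bound. Taking the minimum of the three estimates finishes the proof.

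The only step that is not entirely mechanical is verifying that $T(S)$ in the Pfaffian setting really is a monotone Boolean combination of non-strict inequalities between functions sharing a common Pfaffian chain of the same order $\ell$ and the same degree $(\alpha,\beta)$; in particular, adding the constants $\pm\delta_i$, $\pm\eps_i$ does not increase $\ell$ or $(\alpha,\beta)$, and the condition $\overline{T(S)}\subset(0,1)^n$ required by Theorem~\ref{th:pfaff_bound0} must be arranged by choosing the compactifying cutoff appropriately (analogous to the conjunction with $|\x|^2\le 1/\delta$ in the constructible case). I expect this bookkeeping, together with the implicit appeal to the Pfaffian version of the $O(s^2)$ total-Betti bound from \cite{GV05}, to be the main (and only) nontrivial point; everything else is a transcription of the semialgebraic argument.
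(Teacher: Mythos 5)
Your proposal is correct and is exactly the paper's intended argument: the paper's own proof of Theorem~\ref{th:pfaff_bound1} consists of the single sentence ``Analogous to the proof of Theorem~\ref{th:alg_bound1},'' and you have spelled out precisely what that analogy entails, including the correct substitution of Theorem~\ref{th:pfaff_bound0} for Theorem~\ref{th:alg_bound0}, the Alexander-duality step, the appeal to the Pfaffian analogue of the $O(s^2d)^n$ bound from \cite{GV05}, and the handling of the $\overline{T(S)}\subset(0,1)^n$ hypothesis (the latter being addressed in the paper's Remark immediately following the theorem).
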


\begin{proof}
Analogous to the proof of Theorem~\ref{th:alg_bound1}.
\end{proof}

\begin{remark}
Unlike Theorem~\ref{th:pfaff_bound0}, the condition $\overline S \subset (0,1)^n$
is not required in Theorem~\ref{th:pfaff_bound1}, since
taking the conjunction of inequalities $0 < x_i <1$, for $i=1, \ldots ,n$,
with ${\mathcal F}$, guarantees that the closed set $T(S) \subset (0,1)^n$.
\end{remark}

Now we consider the sub-Pfaffian case.
Let $\rho:\> \Real^{n+r} \to \Real^n$ be the projection map, and
$S= \{ (\x, \y)|\> {\mathcal F}(\x, \y)\} \subset (0,1)^{n+r}$ be a semi-Pfaffian set,
where ${\mathcal F}$ is a Boolean combination of Pfaffian equations and inequalities.
Suppose that all different Pfaffian functions occurring in ${\mathcal F}$
are defined in $(0,1)^{n+r}$, have a common
Pfaffian chain of order $\ell$, their number is $s$, and their degree is $(\alpha , \beta)$.
Since the Pfaffian o-minimal structure does not admit quantifier elimination (i.e., the
projection of a semi-Pfaffian set may not be semi-Pfaffian, see \cite{GV04}),
it is not possible to apply in the Pfaffian case the same method that we used to obtain
the bound (\ref{eq:elim}).
On the other hand, the method employed in the proof of Theorem~\ref{th:alg_bound2} extends
straightforwardly to projections of semi-Pfaffian sets, and produces the following first general
singly exponential upper bound for Betti numbers of sub-Pfaffian sets.

\begin{theorem}\label{th:pfaff_bound2}
The $k$-th Betti number of $\rho (S)$ satisfies the inequality
$${\rm b}_k (\rho (S)) \le (ks)^{O(n+(k+1)r)}2^{O(k \ell)^2}
((n+(k+1)r)(\alpha + \beta))^{n+(k+1)r + k \ell}.$$
\end{theorem}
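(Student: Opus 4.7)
The plan is to mirror the proof of Theorem~\ref{th:alg_bound2} step by step, substituting the Pfaffian Betti-number bound (Theorem~\ref{th:pfaff_bound0}) for its semialgebraic counterpart, and carefully tracking how the Pfaffian complexity parameters transform under the approximation construction $S \mapsto T(S)$ and under the fibred products that appear in Corollary~\ref{cor:spectral}. The case $k=0$ is immediate from Theorem~\ref{th:pfaff_bound1}, so I would assume $k \ge 1$ in what follows.

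First I would represent $S$ by families $\{S_\delta\}$, $\{S_{\delta,\eps}\}$ in a definable compactification of $(0,1)^{n+r}$ in the manner described in Section~\ref{sec:main}. By Lemma~\ref{le:map_F}, the projection $\rho(S)$ is then represented by the families $\{\rho(S_\delta)\}$, $\{\rho(S_{\delta,\eps})\}$ in the corresponding compactification of $\Real^n$. Setting $m = k$ produces the compact approximation $T(\rho(S)) = \rho(T(S))$, and Theorem~\ref{th:main}(i) gives $b_k(\rho(S)) \le b_k(T(\rho(S)))$. Next, the spectral sequence of Corollary~\ref{cor:spectral}, applied to the closed surjective definable map $T(S) \to T(\rho(S))$, yields
$$b_k(T(\rho(S))) \le \sum_{p+q=k} b_q(W_p),$$
where $W_p \subset \Real^{n+(p+1)r}$ is the $(p+1)$-fold fibred product of $T(S)$ over $T(\rho(S))$.

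The crucial step is then to apply Theorem~\ref{th:pfaff_bound0} to each $W_p$ with the correct complexity parameters. By construction, $T(S)$ is compact and defined by a monotone Boolean combination of non-strict inequalities involving at most $4(k+1)s$ Pfaffian functions obtained from the original $s$ functions by translation by $\pm \eps_i,\pm\delta_i$, all sharing the original chain of order $\ell$ and degree $(\alpha,\beta)$. The fibred product $W_p$ lives in $n+(p+1)r$ variables and its defining formula uses at most $4(p+1)(k+1)s$ Pfaffian functions; since the Pfaffian chains attached to the $p+1$ different blocks of $r$ variables are independent, their concatenation gives a common chain of order $(p+1)\ell$. Plugging these parameters into Theorem~\ref{th:pfaff_bound0} bounds $b_q(W_p)$ by
$$\bigl(O((k+1)s)\bigr)^{n+(p+1)r}\, 2^{(p+1)\ell((p+1)\ell-1)/2}\, O\bigl((n+(p+1)r)(\alpha+\beta)\bigr)^{n+(p+1)r+(p+1)\ell}.$$
Summing over $p+q=k$ with $p \le k$, absorbing constants, and using $p+1 \le k+1$ yields the claimed bound.

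The main obstacle is precisely the bookkeeping of the Pfaffian chain order under fibred products: the different copies of $T(S)$ in $W_p$ share only the $\x$-coordinates, while each contributes an independent copy of the chain in its own $\y^{(j)}$-block, so the effective chain length grows linearly in $p+1$. This is what forces the factor $2^{((k+1)\ell)((k+1)\ell-1)/2} = 2^{O(k\ell)^2}$ appearing in the statement, in contrast with the single factor $2^{\ell(\ell-1)/2}$ available for one semi-Pfaffian set, and it is the only ingredient that requires real attention beyond the transparent substitution of Pfaffian for semialgebraic bounds in the skeleton of the proof of Theorem~\ref{th:alg_bound2}.
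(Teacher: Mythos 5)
Your proof is correct and follows essentially the same approach as the paper, which for this theorem merely states that the proof is ``analogous to the proof of Theorem~\ref{th:alg_bound2}.'' Your explicit bookkeeping of the Pfaffian chain order growing to $(p+1)\ell$ in the fibred product $W_p$ (since the $p+1$ copies of $T(S)$ share only the $\x$-coordinates while each contributes an independent chain in its own $\y$-block) is exactly the one point where the Pfaffian case departs from the mechanical substitution into the semialgebraic argument, and it is what produces the $2^{O(k\ell)^2}$ factor and the extra $k\ell$ in the exponent.
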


\begin{proof}
Analogous to the proof of Theorem~\ref{th:alg_bound2}.
\end{proof}

\subsection*{Acknowledgements} We thank J. McClure for useful discussions.
Part of this research was carried out during our joint visit
in Spring 2007 to Institute for Mathematics and
Its Applications at University of Minnesota, under the program Applications of Algebraic
Geometry, to which we are very grateful.


\begin{thebibliography}{50}
\bibitem{Barvinok} A. Barvinok, On the Betti numbers of semi-algebraic sets defined by few
quadratic inequalities, {\em Math. Z.}, 225 (1997), 231--244.

\bibitem{Basu99} S. Basu, On bounding the Betti numbers and computing the Euler
characteristics of semialgebraic sets, {\em Discrete Comput. Geom.}, 22 (1999), 1--18.

\bibitem{BPasR} S. Basu, D. Pasechnik, and M.-F. Roy, Betti numbers of semi-algebraic sets
defined by partly quadratic systems of polynomials, arXiv: 0708.3522.

\bibitem{BPR} S. Basu, R. Pollack, and M.-F. Roy, {\em Algorithms in Real Algebraic Geometry},
Second Edition, Springer-Verlag, Berlin, 2006.

\bibitem{Bjorner} A. Bj\"orner, Nerves, fibers and homotopy groups, {\em J. Combinatorial
Theory}, Ser. A, 102 (2003), 88--93.

\bibitem{Coste} M. Coste, {\em An Introduction to O-minimal Geometry}, Dip. Mat. Univ. Pisa,
Dottorato di Ricerca in Matematica,
Istituti Editoriale e Poligrafici Internazionali, Pisa, 2000.

\bibitem{GV01} A. Gabrielov, N. Vorobjov, Complexity of cylindrical decompositions of
sub-Pfaffian sets, {\em Journal of Pure and Applied Algebra}, 164 (2001), 179–-197.

\bibitem{GV04} A. Gabrielov, N. Vorobjov, Complexity of computations with Pfaffian and
Noetherian functions, in: {\em Normal Forms, Bifurcations and Finiteness Problems in
Differential Equations}, 211-250, Kluwer, 2004.

\bibitem{GV05} A. Gabrielov, N. Vorobjov, Betti numbers of semialgebraic sets defined by
quantifier-free formulae, {\em Discrete Comput. Geom.}, 33 (2005), 395--401.

\bibitem{GVZ} A. Gabrielov, N. Vorobjov, and T. Zell, Betti numbers of semialgebraic and
sub-Pfaffian sets, {\em J. London Math. Soc.}, 69-1 (2004), 27--43.

\bibitem{Khov} A. Khovanskii, {\em Fewnomials}, Translations of Mathematical Monographs 88,
AMS, Providence, RI, 1991.

\bibitem{Milnor} J. Milnor, On the Betti numbers of real varieties,
{\em Proc. Amer. Math. Soc.}, 15 (1964), 275-–280.

\bibitem{O} O. A. Oleinik, Estimates of the Betti numbers of real algebraic
hypersurfaces (Russian), {\em Mat. Sb.}, 28 (1951), 635-–640.

\bibitem{PO} I. G. Petrovskii, O. A. Oleinik, On the topology of real algebraic
hypersurfaces (Russian), {\em Izv. Acad.
Nauk SSSR}, 13 (1949), 389–-402.
English transl.: {\em Amer. Math. Soc. Transl.}, 7 (1962), 399-–417.

\bibitem{Spanier} E. Spanier, {\em Algebraic Topology}, Springer-Verlag, New York-Berlin, 1981.

\bibitem{Thom} R. Thom, Sur l'homologie des vari\'{e}t\'{e}s algebriques r\'{e}elles,
in: {\em Differential and Combinatorial
Topology}, 255--265, Princeton University Press, Princeton, 1965.

\bibitem{Zell} T. Zell, Betti numbers of semi-Pfaffan sets,
{\em Journal of Pure and Applied Algebra}, 139 (1999), 323--338.
\end{thebibliography}
\end{document}